\titleformat{\subsection}[runin]
{\normalfont\large\bfseries}{\thesubsection}{1em}{}
\let\OLDthebibliography\thebibliography
\renewcommand\thebibliography[1]{
	\OLDthebibliography{#1}
	\setlength{\parskip}{1pt}
	\setlength{\itemsep}{1pt plus 0.3ex}
}
\numberwithin{equation}{section}
\theoremstyle{plain}
\newtheorem{theorem}{Theorem}
\newtheorem{lemma}{Lemma}[section]
\newtheorem{propos}{Proposition}
\theoremstyle{definition}
\newtheorem{definition}{Definition}
\newtheorem{remark}{Remark}
\def\P{\mathrm{P}}
\def\L{\mathrm{L}}
\def\x{\mathrm{x}}
\def\a{\mathrm{a}}
\def\A{\mathrm{A}}
\def\p{\mathrm{p}}
\def\s{\mathrm{s}}
\def\t{\mathrm{t}}
\def\v{\mathrm{v}}
\def\c{\mathrm{c}}
\def\f{\mathrm{f}}
\def\RR{\mathbb R}
\title{
	\vspace*{-0.5cm}
	On solutions of anisotropic elliptic equations with variable exponent  and measure data
}
\newcommand{\address}[1]{\gdef\@address{#1}}
\newcommand{\email}[1]{\gdef\@email{\url{#1}}}
\newcommand{\homepage}[1]{\gdef\@homepage{\url{#1}}}
\newcommand{\@endstuff}{\vspace{\baselineskip}\noindent\small
	\begin{tabular}{@{}l}
		\@address\\
		\textit{E-mail address:} \@email\\
\end{tabular}}
\author{L.\,M.~Kozhevnikova}
\email{kosul@mail.ru}
\address{
	\textsc{Sterlitamak branch of Bashkir State University},\\
	\textsc{Elabuga Institute of Kazan Federal University}}
\date{}
\begin{document}
	\maketitle
	\selectlanguage{english}
	\begin{abstract}
		The Dirichlet problem in arbitrary domains for a wide class of anisotropic elliptic equations of the second order with variable exponent nonlinearities and the right-hand side as a measure is considered.
		The existence of an entropy solution in anisotropic Sobolev spaces with variable exponents is established.
		It is proved that the obtained entropy solution is a renormalized solution of the considered problem.
		
		\par
		\smallskip
		\noindent {\bf  Keywords}:
		anisotropic elliptic equation, entropy solution, renormalized solution, existence of solutions, variable exponent, Dirichlet problem, measure data, unbounded domain.
		
	\end{abstract}

\section{Introduction}\label{s1}

Since the end of the last century, nonlinear elliptic equations of the second order
\begin{equation}\label{urf}
-{\rm div}\,(\a(\x,u,\nabla
u)+\c(u))+a_0(\x,u,\nabla
u)=\mu,\quad \x\in \Omega,
\end{equation}
with a measure on the right-hand side have been intensively studying.
Hereinafter, $\Omega$ is a domain in $\mathbb{R}^n=\{\x=(x_1,x_2,\ldots,x_n)\},\;$ $\Omega\subsetneq \mathbb{R}^n,\;n \geq 2$, $${\a}(\x,s_0,\s)=(a_1(\x,s_0,\s),\ldots,a_n(\x,s_0,\s)):\Omega\times\mathbb{R}\times\mathbb{R}^n\rightarrow \mathbb{R}^n,$$
 $$a_0(\x,s_0,\s):\Omega\times\mathbb{R}\times\mathbb{R}^n\rightarrow \mathbb{R},\quad{\c}(s_0)=(c_1(s_0),\ldots,c_n(s_0)):\mathbb{R}\rightarrow \mathbb{R}^n.$$

Ph.\ Benilan,  L.\ Boccardo,  Th.\  Gallou\"{e}t, R.\ Gariepy,  M.\ Pierre, J.~L.\ Vazquez in \cite{Beni} proposed a notion of entropy solution of the Dirichlet problem for the elliptic equations with power nonlinearities
 \begin{equation}\label{ur0}
-{\rm div}\,\a(\x,\nabla
u)+a_0(\x,u)=\mu
\end{equation}
where $\mu\in L_1(\Omega)$, and they proved the existence and uniqueness of such solution.
Instead of the entropy solution firstly introduced by S.~N.\ Kruzhkov \cite{Kru} for the first order equations, it is also possible to consider a renormalized solution. A notion of renormalized solution was firstly introduced in  \cite{LionDi} to study the Cauchy problem for the Boltzmann equation.
Such solutions are elements of the same functional class as entropy solutions, but satisfy a different family of integral relations.
In some cases, notions of entropy and renormalized solutions are equivalent.

L.\ Boccardo in \cite{Boccardo_1996} proved the existence of solutions of the Dirichlet problem in bounded domains $\Omega$ for the equation \eqref{urf} with $a_0(\x,s_0,\s)\equiv 0,\;\c(s_0)\in C_0(\mathbb{R},\mathbb{R}^n)$ and $\mu\in L_1(\Omega)$.
In \cite{Boccardo_1993}, it was proved the existence of renormalized solutions and some regularity results to the Dirichlet problem in bounded domains $\Omega$ for equations of the type \eqref{urf} with the function $a_0(\x,s_0,\s
)=a_0(\x,s_0)$ having some growth and sign conditions, and $\mu\in W^{-1}_{p'}(\Omega)$.
The existence of entropy solutions of the Dirichlet problem in bounded domains for the equation \eqref{urf} with $\c(s_0)\equiv 0$, $a_0(\x,s_0)\equiv 0$, $\mu\in L_1(\Omega)$ and a degenerate coercivity was established by A.~A.\ Kovalevsky \cite{Koval_UMJ}.

Questions about the existence and uniqueness of renormalized and entropy solutions of the Dirichlet problem for elliptic equations of the second order with non-power nonlinearities and $\mu\in L_{1}(\Omega)$  ($\Omega$ is a bounded domain) in Sobolev-Orlicz spaces were studied in \cite{BB},  \cite{ABT}, \cite{GWWZ}.
Theorems on the existence and uniqueness of entropy solutions of the Dirichlet problem in arbitrary domains for a class of anisotropic elliptic equations with non-power nonlinearities in Sobolev-Orlicz spaces were proved by the author in \cite{Kozh_vmmf_2017}, \cite{Kozh_int_2017}.
Since then, a lot of articles have been devoted to these issues, see the surveys \cite{Muk}, \cite{Muk_2018}.

At present, the study of quasilinear equations with a measure data has become a mature subject of research.
The first investigations were concerned with the Dirichlet problem for the equation
$$
-\Delta_pu=\mu
$$
in a smooth bounded domain $\Omega\subset \mathbb{R}^n$, where $\mu$ is a Radon measure on $\Omega$.
Initially, solutions were understood in the distributional sense, but neither the uniqueness nor any kind of stability have been studied. Important progress was achieved with the introduction of renormalized solutions, and in the framework of this notion a strong convergence of the gradient and a stability of solutions were established.

The concept of renormalized solutions is the main step in the study of general degenerate elliptic equations whose data is a measure.
The initial definition was given in \cite{DMOP_1999} and then extended by M.~F.\ Bidaut-Veron in \cite{Bidaut_Veron} to the local form. The main conclusion in \cite{DMOP_1999} is the fact that each Radon measure $\mu$ of a bounded variation can be decomposed as $\mu = \mu_0 + \mu_s$, where $\mu_0 \in L_1 (\Omega) + W^{-1}_{p'} (\Omega)$ and $\mu_s$ is concentrated on a set of the zero $p$-capacity. In \cite{DMOP_1999} and \cite{Malusa}, the existence and stability of a renormalized solution of the Dirichlet problem for the equation \eqref{ur0} with $a_0(\x,s_0)\equiv 0$ were proved.
In \cite{Malusa_Porzio}, for the equation \eqref{ur0} there was proved the existence, and for $\mu =
\mu_0$ also the uniqueness, of a renormalized solution of the Dirichlet problem in an arbitrary domain $\Omega$.
A detailed survey of results for quasilinear degenerate equations with power nonlinearities and a measure data can be found in the monograph of L.~Veron \cite{Veron}.

On the other hand, since the end of the last century, differential equations and variational problems associated with assumptions of $p(\x)$-growth have been also widely studying.
Interest in these investigations is due to the fact that such equations can be used to model various phenomena arising in the study of electrorheological and thermoreological liquids, elasticity, and image reconstruction \cite{Halsey}.

The modern theory of elliptic equations with non-standard growth conditions was developed by V.~V.\ Zhikov \cite{Zhik}, Yu.~A.\ Alkhutov \cite{Alxut}. In the works \cite{BendWitt}, \cite{Urbano}, \cite{Ouaro}, \cite{Zhang_Zhou}, there were proved theorems on the existence and uniqueness of renormalized and entropy solutions to the Dirichlet problem for equations with variable exponent nonlinearities in bounded domains $\Omega$.
In \cite{Kozhe_SMFN}, for the anisotropic equation \eqref{ur0} with variable exponent nonlinearities with $a_0(\x,s_0)=|s_0|^{p_0(\x)-2}s_0+b(\x,s_0)$, nondecreasing function $b(\x,s_0)$ with respect to $s_0$, and $\mu\in L_1(\Omega)$, the existence and uniqueness of a entropy solution  for the Dirichlet problem in an arbitrary unbounded domain $\Omega$ were proved.

Let us denote
$$
C^+(\overline{\Omega})=\{p\in C(\overline{\Omega})\;:\; 1<p^-\leq p^+<+\infty \},
$$
where $p^-=\inf\limits_{\x\in \Omega}p(\x),\;$ $p^+=\sup\limits_{\x\in \Omega}p(\x)$.
Let
$p(\cdot)\in C^+(\overline{\Omega})$. We define the Lebesgue space with variable exponent
$L_{p(\cdot)}(\Omega)$ as the set of measurable on $\Omega$ real-valued functions $v$ such that
$$
\rho_{p(\cdot),\Omega}(v)=\int\limits_\Omega|v(\x)|^{p(\x)}d\x<\infty.
$$
The Luxemburg norm in $L_{p(\cdot)}(\Omega)$ is defined by
$$
\|v\|_{L_{p(\cdot)}(\Omega)}=\|v\|_{p(\cdot),\Omega}=\inf\left\{k>0\;\Big |\;
\rho_{p(\cdot),\Omega}(v/k)\leq1\right\}.
$$
The Sobolev space with variable exponent $\mathring
{H}_{p(\cdot)}^1(\Omega)$ is defined as a completion of $C_0^{\infty}(\Omega)$ with respect to the norm
$$
\|v\|_{\mathring
{H}_{p(\cdot)}^1(\Omega)}=\|\nabla v\|_{p(\cdot),\Omega}.$$

The set of bounded Radon measures is denoted as $\mathcal{M}^b(\Omega)$. The measure $\mu\in\mathcal{M}^b(\Omega)$ is called diffuse if $\mu(E)=0$ for any $E$ such that ${\rm Cap}_{p(\cdot)}\,(E,\Omega)=0$. Here, $p(\cdot)$-capacity of a subset $E$ with respect to $\Omega$ is defined as
$$
{\rm Cap}_{p(\cdot)}\,(E,\Omega)=\inf\limits_{S_{p(\cdot)}(E)}\rho_{p(\cdot),E}(|\nabla v|),
$$
$$S_{p(\cdot)}(E)=\left\{v\in\mathring
{H}_{p(\cdot)}^{1}(\Omega)\cap C_0(\Omega)\;\Big |\; v(\x)=1,\;\x\in E,\; v(\x)\geq 0,\;\x\in \Omega  \right\}.$$
We denote by $\mathcal{M}^b_{p(\cdot)}(\Omega)$ the set of all bounded Radon
diffuse measures.
In the case of a bounded domain $\Omega$, it was proved in \cite{Ouaro_2013}, \cite{Zhang_2014} that $\mu\in\mathcal{M}^b_{p(\cdot)}(\Omega)$ if and only if $\mu\in L_1(\Omega)+H^{-1}_{p'(\cdot)}(\Omega)$.
For the anisotropic case such representation is not known.

The existence of entropy solutions of the Dirichlet problem in bounded domains $\Omega$ for equations with variable exponent nonlinearities of the type \eqref{urf} was studied in \cite{AHT}, \cite{Benboubker_2014}, \cite{Benboubker_2015}, \cite{Azroul_2018}. Namely, it was proved in \cite{AHT}, \cite{Benboubker_2014} that for $\mu\in L_1(\Omega)$ there exists an entropy solution of the equation \eqref{urf} under homogeneous boundary conditions
\begin{equation}
\label{gu}
\left.u\right|_{\partial \Omega}=0.
\end{equation}
The authors of \cite{Benboubker_2015}, \cite{Azroul_2018} established the existence of an entropy solution of the problem \eqref{urf}, \eqref{gu} for $\mu\in L_1(\Omega)+H^{-1}_{p'(\cdot)}(\Omega)$, that is,
$$
\mu=f-{\rm div}\, \f,\quad f\in L_1(\Omega),\quad \f=(f_1,\dots,f_n)\in (L_{p'(\cdot)}(\Omega))^n.
$$

In the works \cite{Benboubker_2015}, \cite{Azroul_2018}, it is assumed that $\c\in C_0(\mathbb{R},\mathbb{R}^n),\;$ $\a(\x,s_0,\s)$ is a Caratheodory function and there exist a nonnegative function $\Phi\in L_{p'(\cdot)}(\Omega)$ and  positive numbers $\widehat{a}, \overline{a}$ such that for a.a.\ $\x\in\Omega$ and for any $s_0\in \mathbb{R},\;$  $\s,\t\in\mathbb{R}^n,\;$ the following inequalities are satisfied:
\begin{equation}
\label{ogr}
|\a(\x,s_0,\s)|\leq \widehat{a}\left(|s_0|^{p(\x)-1}+|\s|^{p(\x)-1}
+\Phi(\x)\right);
\end{equation}
\begin{equation}
\label{mon}
(\a(\x,s_0,\s)-\a(\x,s_0,\t))\cdot(\s-\t)>0,\quad \s\neq \t;
\end{equation}
\begin{equation}
\label{koerc}
\a(\x,s_0,\s)\cdot\s\geq \overline{a}|\s|^{p(\x)}.
\end{equation}
Here ${\s}\cdot{\t}=\sum\limits_{i=1}^ns_it_i,\;$ $\s=(s_1,\ldots,s_n),\;$$\t=(t_1,\ldots,t_n)$.

It should be pointed out that denoting $\widetilde{\a}(\x,s_0,\s)=\a(\x,s_0,\s)-\f$ we get the equation
$$
-{\rm div}(\widetilde{\a}(\x,u,\nabla
u)+\c(u))+a_0(\x,u,\nabla u)=f
$$
with the function $\widetilde{\a}(\x,s_0,\s)$ which satisfies conditions of the form \eqref{ogr}, \eqref{mon}.
Moreover, the coercivity assumption \eqref{koerc} becomes
$$
\widetilde{\a}(\x,s_0,\s)\cdot\s\geq \widetilde{a}|\s|^{p(\x)}-\phi(\x),\quad \phi\in L_1(\Omega).
$$

In the present paper, this idea is realized for the anisotropic equation of the form
\begin{equation}
\label{ur}
{\rm div}\,\a(\x,u,\nabla
u)=|u|^{p_0(\x)-2}u+b(\x,u,\nabla u)+\mu,\quad \x\in \Omega
\end{equation}
in arbitrary domains $\Omega\subsetneq \mathbb{R}^n$ and with functions $a_i(\x,s_0,\s)$ satisfying less restrictive requirements (see the assumptions \eqref{us1}--\eqref{us3}) than in the works \cite{Benboubker_2015}, \cite{Azroul_2018}.

Therefore, to the best of our knowledge, in the existing literature there are results for entropy and renormalized solutions of elliptic problems in bounded domains (except for the works \cite{Beni}, \cite{Bidaut_Veron}, \cite{Malusa_Porzio} for equations with power nonlinearities, and the works of the author).
In the present paper, it is proved the existence of entropy solutions of the Dirichlet problem \eqref{ur},
\eqref{gu} in anisotropic Sobolev spaces with variable exponents without assuming the boundedness of the domain $\Omega$.
Moreover, it is established that the obtained solution is a renormalized solution of the Dirichlet problem \eqref{ur}, \eqref{gu}.

\section{Anisotropic Sobolev space with variable exponents}\label{s2}

Let $Q\subsetneq\mathbb{R}^n$ be an arbitrary domain and $p(\cdot)\in C^+(\overline{Q})$.
The following Young's inequality is satisfied:
\begin{equation}\label{ung}
|y z|\leq |y|^{p(\x)}+|z|^{p'(\x)},\quad z,y\in \mathbb{R},\quad \x\in Q, \quad p'(\x)=\frac{p(\x)}{p(\x)-1}.
\end{equation}
Moreover, in view of the convexity, there holds
\begin{equation}\label{sum}
|y+z|^{p(\x)}\leq 2^{p^+-1}(|y|^{p(\x)}+|z|^{p(\x)}),\quad z,y\in \mathbb{R},\quad\x\in Q.
\end{equation}

Hereinafter, we will use the notations $\|v\|_{p(\cdot),\Omega}=\|v\|_{p(\cdot)},\;\rho_{p(\cdot),\Omega}(v)=\rho_{p(\cdot)}(v).$
The norm in $L_p(Q),\; p\in[1,\infty],$ will be denoted by $\|v\|_{p,Q}$, and $\|v\|_{p,\Omega}=\|v\|_{p}$.
The space $L_{p(\cdot)}(Q)$ is a separable reflexive Banach space \cite{Diening}.

For any $u\in L_{p'(\cdot)}(Q),\;v\in L_{p(\cdot)}(Q)$, the following H\"older inequality is satisfied:
\begin{equation}\label{gel}
  \int\limits_Q |u(\x)v(\x)|d\x\leq 2\|u\|_{p'(\cdot),Q}\|v\|_{p(\cdot),Q}.
\end{equation}
Moreover, the following relations hold true \cite{Diening}:
\begin{gather}
\|v\|^{p^-}_{p(\cdot),Q}-1\leq\min\{\|v\|^{p^-}_{p(\cdot),Q},\|v\|^{p^+}_{p(\cdot),Q}\}\leq\rho_{p(\cdot),Q}(v)\leq\nonumber\\
\leq\max\{\|v\|^{p^-}_{p(\cdot),Q},\|v\|^{p^+}_{p(\cdot),Q}\}\leq \|v\|^{p^+}_{p(\cdot),Q}+1,\label{st2}
\end{gather}
and
\begin{gather}
\left(\rho_{p(\cdot),Q}(v)-1\right)^{1/p^+}\leq\min\{\rho^{1/p^+}_{p(\cdot),Q}(v),\rho^{1/p^-}_{p(\cdot),Q}(v)\}\leq\nonumber\\
\leq\|v\|_{p(\cdot),Q}\leq \max\{\rho^{1/p^+}_{p(\cdot),Q}(v),\rho^{1/p^-}_{p(\cdot),Q}(v)\}\leq\left(\rho_{p(\cdot),Q}(v)+1\right)^{1/p^-}.\label{st3}
\end{gather}
Furthermore, for $p(\cdot)\in L_{\infty}(Q)$ such that
$1\leq p(\x)q(\x)\leq \infty$ for a.a.\ $\x\in Q$ and $v\in L_{q(\cdot)}(Q),\;v\not\equiv 0$ we have (see \cite{Edmunds})
\begin{equation}\label{st4}
\min\{\|v\|^{p^+}_{p(\cdot)q(\cdot),Q},\|v\|^{p^-}_{p(\cdot)q(\cdot),Q}\}\leq\||v|^{p(\cdot)}\|_{q(\cdot),Q}\leq \max\{\|v\|^{p^+}_{p(\cdot)q(\cdot),Q},\|v\|^{p^-}_{p(\cdot)q(\cdot),Q}\}.
\end{equation}

Let us denote
$$
\overrightarrow{\p}(\cdot)=(p_1(\cdot),p_2(\cdot),\ldots,p_n(\cdot))\in (C^+(\overline{Q}))^n,
\quad
\overrightarrow{\bf p}(\cdot)=(p_0(\cdot),\overrightarrow{\p}(\cdot))\in (C^+(\overline{Q}))^{n+1},
$$
and define
$$
p_+(\x)=\max_{i=\overline{1,n}}p_i(\x),\quad p_-(\x)=\min_{i=\overline{1,n}}p_i(\x),\quad  \x\in Q.
$$
We will also use the notations $\frac{\partial v}{\partial x_i}=v_{x_i}=\partial_iv,\;i=1,\ldots,n$.
Anisotropic Sobolev spaces with variable exponents $\mathring
{H}_{\overrightarrow{\p}(\cdot)}^1(Q),\; \mathring
{W}_{\overrightarrow{\bf p}(\cdot)}^{1}(Q)$ are defined as completions of the space $C_0^{\infty}(Q)$ with respect to the norms
$$
\|v\|_{\mathring
{H}_{\overrightarrow{\p}(\cdot)}^1(Q)}=\sum_{i=1}^n\|v_{x_i}\|_{p_i(\cdot),Q},$$
$$
\|v\|_{\mathring
{W}_{\overrightarrow{\bf p}(\cdot)}^{1}(Q)}=\|v\|_{p_0(\cdot),Q}+\|v\|_{\mathring
{H}_{\overrightarrow{\p}(\cdot)}^{1}(Q)},
$$
respectively.
The spaces $\mathring{H}_{\overrightarrow{\p}(\cdot)}^1(Q),\;\mathring{W}_{\overrightarrow{\bf p}(\cdot)}^{1}(Q)$ are reflexive Banach spaces \cite{Fan}.

Let
$$
\overline{p}(\x)={n}\left(\sum\limits_{i=1}^n 1/p_i(\x)\right)^{-1},\quad
p_*(\x)=\left\{\begin{array}{ll}\frac{n\overline{p}(\x)}{n-\overline{p}(\x)},& \overline{p}(\x)<n,\\
+\infty,& \overline{p}(\x)\geq n,
\end{array}\right.$$$$
p_{\infty}(\x)=\max\{p_*(\x),p_+(\x)\},\quad \x\in Q.
$$
We recall the following embedding theorem for the space $\mathring{H}_{\overrightarrow{\p}(\cdot)}^1(Q)$, see \cite[Theorem 2.5]{Fan}.
\begin{lemma}\label{lemma_0}
	Let $Q$ be a bounded domain and $\overrightarrow{\p}(\cdot)=(p_1(\cdot),\ldots,p_n(\cdot))\in (C^+(\overline{Q}))^n$. If $q(\cdot)\in C^+(\overline{Q})$ and
\begin{equation}\label{kozhelm-1}
q(\x)<p_{\infty}(\x)\quad \forall\;\x\in Q,
\end{equation}
then the embedding $\mathring
{H}_{\overrightarrow{\p}(\cdot)}^1(Q)\hookrightarrow L_{q(\cdot)}(Q)$ is continuous and compact.
\end{lemma}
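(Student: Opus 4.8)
\emph{Proof plan.} This is the variable‑exponent analogue of the classical anisotropic Sobolev embedding, so the plan is to reduce it to the constant‑exponent theorem by localization. Using continuity of $p_1,\dots,p_n,q$ on $\overline Q$ together with strictness of $q(\x)<p_{\infty}(\x)$, I would cover the compact set $\overline Q$ by finitely many balls $B_1,\dots,B_N$ chosen so small that, setting $\underline p_i^{j}:=\inf_{Q\cap B_j}p_i$ and $\overline q^{j}:=\sup_{Q\cap B_j}q$, the frozen exponents still obey the corresponding strict inequality, i.e.\ $\overline q^{j}<\max\{\pi_j^*,\max_i\underline p_i^{j}\}$, where $\pi_j=n\big(\sum_{i=1}^n(\underline p_i^{j})^{-1}\big)^{-1}$ and $\pi_j^*=n\pi_j/(n-\pi_j)$, interpreted as $+\infty$ when $\pi_j\ge n$. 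Then I would fix a subordinate partition of unity $\varphi_j\in C_0^{\infty}(B_j)$, $\sum_j\varphi_j\equiv1$ on $\overline Q$, and work with the splitting $v=\sum_j\varphi_jv$ for $v\in C_0^{\infty}(Q)$.

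\emph{Continuous embedding.} On the finite‑measure set $Q\cap B_j$ one has $L_{p_i(\cdot)}(Q\cap B_j)\hookrightarrow L^{\underline p_i^{j}}(Q\cap B_j)$ (a variable Lebesgue space embeds into a constant one with smaller exponent when the measure is finite, a routine consequence of \eqref{gel} and \eqref{st2}--\eqref{st3}); hence each $\varphi_jv$, extended by zero, lies in the constant‑exponent anisotropic Sobolev space $\mathring W^{1,\vec r_j}(B_j)$ with $\vec r_j=(\underline p_1^{j},\dots,\underline p_n^{j})$, and the classical (constant‑exponent) anisotropic Sobolev embedding gives $\mathring W^{1,\vec r_j}(B_j)\hookrightarrow L^{\overline q^{j}}(B_j)\hookrightarrow L_{q(\cdot)}(B_j)$, the last inclusion again by $|B_j|<\infty$ and $q\le\overline q^{j}$ on $B_j$. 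Summing over $j$ and using density of $C_0^{\infty}(Q)$ in $\mathring{H}_{\overrightarrow{\p}(\cdot)}^1(Q)$ would give the continuous embedding. The delicate point is the norm estimate for $\varphi_jv$: in $\partial_i(\varphi_jv)=\varphi_j\partial_iv+v\,\partial_i\varphi_j$ the second term must be bounded in $L^{\underline p_i^{j}}(B_j)$, which a priori needs an estimate of $v$ itself (not just $\nabla v$) in a constant Lebesgue space. I would resolve this by a finite bootstrap: start from the crude inclusion $\mathring{H}_{\overrightarrow{\p}(\cdot)}^1(Q)\hookrightarrow\mathring W^{1,1}(Q)\hookrightarrow L^{n/(n-1)}(Q)$ (valid since $p_i\ge p_i^->1$ on the bounded set $Q$, so $L_{p_i(\cdot)}(Q)\hookrightarrow L^1(Q)$), then apply the constant anisotropic embedding on each $B_j$ with the truncated exponents $\min\{\underline p_i^{j},s\}$, improving the integrability exponent $s$ of $v$ by a fixed Sobolev gain at each step until $s\ge\max_i\underline p_i^{j}$, from which point the full estimate is legitimate.

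\emph{Compactness.} I would pick $q_1(\cdot)\in C^+(\overline Q)$ with $q(\x)<q_1(\x)<p_{\infty}(\x)$ on $Q$, so $\mathring{H}_{\overrightarrow{\p}(\cdot)}^1(Q)\hookrightarrow L_{q_1(\cdot)}(Q)$ continuously by the previous part, and note that $\mathring{H}_{\overrightarrow{\p}(\cdot)}^1(Q)\hookrightarrow\mathring W^{1,1}(Q)$ is compactly included in $L^1(Q)$ (Rellich--Kondrachov for $W^{1,1}_0$ on a bounded domain). Given a bounded sequence $(u_k)$ in $\mathring{H}_{\overrightarrow{\p}(\cdot)}^1(Q)$, I would extract a subsequence with $u_k\to u$ in $L^1(Q)$ and a.e.\ in $Q$, $\|u_k-u\|_{q_1(\cdot),Q}$ bounded, and then upgrade the a.e.\ convergence: by H\"older's inequality \eqref{gel} with the exponent pair $q_1(\cdot)/q(\cdot)$ and its conjugate, combined with \eqref{st2}--\eqref{st4},
$$
\int\limits_E|u_k-u|^{q(\x)}\,d\x\le 2\,\big\||u_k-u|^{q(\cdot)}\big\|_{q_1(\cdot)/q(\cdot),Q}\,\|1\|_{(q_1/q)'(\cdot),E}\longrightarrow 0\quad\text{as }|E|\to 0,
$$
uniformly in $k$, so $\{|u_k-u|^{q(\cdot)}\}_k$ is uniformly integrable on $Q$; since it tends to $0$ a.e.\ and $|Q|<\infty$, Vitali's theorem gives $\rho_{q(\cdot),Q}(u_k-u)\to0$, hence $u_k\to u$ in $L_{q(\cdot)}(Q)$ by \eqref{st3}, and a standard subsequence argument promotes this to the full sequence.

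I expect the main obstacle to be the localization bookkeeping in the continuity step: keeping the strict inequality $q<p_{\infty}$ alive when the exponents are frozen on small balls — in particular near points where the harmonic mean $\overline p(\x)=n$, where $p_{\infty}=+\infty$ and any finite comparison exponent works — and, above all, controlling the gradient‑of‑cutoff term $v\,\partial_i\varphi_j$, which is precisely what forces the bootstrap.
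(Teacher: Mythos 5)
The paper itself contains no proof of this lemma --- it is quoted verbatim from \cite[Theorem 2.5]{Fan} --- so your argument has to stand on its own, and its continuity step has a genuine gap: the bootstrap you invoke to control the cutoff terms $v\,\partial_i\varphi_j$ does not reach the exponent you need. Quantitatively, one step of your iteration replaces the current integrability level $s$ of $v$ by $\max\{\rho(s)^*,s\}$, where $\tfrac1{\rho(s)}=\tfrac1n\sum_{i}\max\{1/\underline p_i^{j},1/s\}$, and a short computation shows
\begin{equation}
\rho(s)^*>s \iff \sum_{i=1}^n\Bigl(\tfrac1{\underline p_i^{j}}-\tfrac1s\Bigr)^+<1 .
\end{equation}
Hence the iteration is not driven by ``a fixed Sobolev gain'': the gain shrinks, and the sequence stalls at the root $s^{\sharp}$ of $\sum_i(1/\underline p_i^{j}-1/s)^+=1$, which always satisfies $s^{\sharp}\le \pi_j^{*}$ and is strictly below $\max_i\underline p_i^{j}$ whenever $\max_i\underline p_i^{j}>\pi_j^{*}$ (equivalently, it passes the level $\max_i\underline p_i^{j}$ if and only if $\max_i\underline p_i^{j}\le\pi_j^{*}$). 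Concretely, near a point with $n=3$ and $\overrightarrow{\p}(\x_0)\approx(1.01,\,1.01,\,100)$ one has $\pi_j^{*}\approx 3.03$, $p_+\approx100$, $p_\infty\approx100$, and your iteration climbs $3/2\to1.82\to1.96\to\dots\to s^{\sharp}\approx2.04$ and stops, whereas putting $v\,\partial_3\varphi_j$ into $L^{\underline p_3^{j}}$ requires $v\in L^{100}_{\rm loc}$, i.e.\ essentially the conclusion you are trying to prove. So precisely in the regime where $p_\infty(\x)$ is realized by $p_+(\x)$ rather than $p_*(\x)$ --- a regime the lemma explicitly covers through $p_{\infty}=\max\{p_*,p_+\}$ and which cannot be excluded here, since the paper assumes only \eqref{us0} --- your reduction is circular. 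That $L^{p_+}$-part of the constant-exponent anisotropic embedding is exactly the nontrivial content of the Troisi/Fan theorem and is obtained there by direction-wise (mixed-norm, one-dimensional) estimates exploiting the good coordinate direction, not by iterating the harmonic-mean exponent; your localization does not recover it, and in the example above it does not even recover the full range $q<\pi_j^{*}$.

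The rest of the proposal is sound: the compactness step (Rellich--Kondrachov for $\mathring W^{1,1}$, a.e.\ convergence, uniform integrability of $|u_k-u|^{q(\cdot)}$ via \eqref{gel}, \eqref{st2}--\eqref{st4} with the pair $q_1(\cdot)/q(\cdot)$, then Vitali and \eqref{st3}) is a correct and standard way to upgrade a continuous embedding with a strictly larger exponent to a compact one, and your freezing/covering scheme is unobjectionable wherever $p_+<p_*$ or $\overline p\ge n$. Two smaller points: finiteness of the number of bootstrap steps needs the uniform positivity of the gain on the compact interval of exponents (available exactly when $p_+^{j}<\pi_j^{*}$, so it is subsumed by the main issue); and your covering of $\overline Q$ tacitly needs the strict inequality \eqref{kozhelm-1} to persist at boundary points (as in Fan's formulation, where it is imposed on $\overline Q$) --- with the inequality assumed only on the open set $Q$, as literally written in the lemma, the frozen inequality on balls centered on $\partial Q$ need not hold, though that is arguably an imprecision of the statement rather than of your argument.
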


\section{Assumptions and main results}\label{s3}

Let $\overrightarrow{\bf p}(\cdot)=(p_0(\cdot),p_1(\cdot),\ldots,p_n(\cdot))\in (C^+(\overline{\Omega}))^{n+1}$.
We will assume that
\begin{equation}\label{us0}
p_+(\x)\leq p_0(\x),\quad \x\in \Omega.
\end{equation}
We will also assume that functions
$a_i(\x,s_0,\mathrm{s}),$ $i=1,\ldots,n,$ and $b(\x,s_0,\mathrm{s})$ from the equation \eqref{ur} are measurable with respect to $\x\in \Omega$ for $s_0\in\mathbb{R},\; \mathrm{s}=(s_1,\ldots,s_{n})\in\mathbb{R}^{n}$, and continuous with respect to $(s_0,\mathrm{s})\in\mathbb{R}^{n+1}$ for a.a.\ $\x\in\Omega$.
Assume that there are nonnegative functions $\Phi_i\in L_{p'_i(\cdot)}(\Omega),\;\phi\in L_1(\Omega)$, continuous nondecreasing functions $\widehat{a}_i:\mathbb{R}^+\rightarrow \mathbb{R}^+\setminus\{0\},\;i=1,\ldots,n,$ and a positive number $\overline{a}$, such that for a.a.\ $\x\in\Omega$ and all $s_0\in \mathbb{R},\;$  $\s,\t\in\mathbb{R}^n$  the following inequalities are satisfied:
\begin{equation}\label{us1}
|a_i(\x,s_0,\s)|\leq \widehat{a}_i(|s_0|)\left((\P(\x,\s))^{1/p'_i(\x)}
+\Phi_i(\x)\right),\quad i=1,\ldots,n;
\end{equation}
\begin{equation}
(\a(\x,s_0,\s)-\a(\x,s_0,\t))\cdot(\s-\t)>0,\quad \s\neq \t;\label{us2}
\end{equation}
\begin{equation}\label{us3}
\a(\x,s_0,\s)\cdot\s\geq \overline{a} \, \P(\x,\s)-\phi(\x).
\end{equation}
Hereinafter, we use the notations
 $\P(\x,\s)=\sum\limits_{i=1}^n|s_i|^{p_i(\x)},\;$
$\P'(\x,\s)=\sum\limits_{i=1}^n|s_i|^{p'_i(\x)},\;$ ${\bf P}(\x,\s_0,\s)=\P(\x,\s)+|s_0|^{p_0(\x)}$.

Let us note that in the articles known to the author a condition of the type \eqref{us3} appears only with $\phi=0$.

Furthermore, assume that there exist a nonnegative function $\Phi_0\in L_{1}(\Omega)$ and a continuous nondecreasing function $\widehat{b}:\mathbb{R}^+\rightarrow \mathbb{R}^+$ such that for a.a.\ $\x\in\Omega$ and all $s_0\in \mathbb{R},\;$ $\s\in\mathbb{R}^n$ the following inequalities are satisfied:
\begin{equation}\label{us4}
|b(\x,s_0,\s)|\leq \widehat{b}(|s_0|)\left(\P(\x,\s)
+\Phi_0(\x)\right);
\end{equation}
\begin{gather}
b(\x,s_0,\s)s_0\geq 0. \label{us5}
\end{gather}

Evidently, the assumption \eqref{us5} implies that for a.a.\ $\x\in\Omega$ and all $\s\in\mathbb{R}^n$ there holds
\begin{equation}\label{us5_}
b(\x,0,\s)=0.
\end{equation}

As an example, we can consider functions
$$
a_i(\x,s_0,\s)=\widehat{a}_i(|s_0|)\left(\P(\x,\s)^{1/p'_i(\x)}{\rm sign}\,s_i+\Phi_i(\x)\right)
,\quad i=1,\ldots,n,
$$
$$
b(\x,s_0,\s)=b(s_0)\P(\x,\s)^{1/q'(\x)}\Phi_0^{1/q(\x)}(\x),
$$
with a nonnegative nondecreasing odd function $b:\mathbb{R}\rightarrow \mathbb{R},\;$ $q(\cdot)\in C^+(\overline{\Omega})$ and a nonnegative function $\Phi_0\in L_{1}(\Omega)$.

By ${\L}_{\overrightarrow{\p}(\cdot)}(\Omega)$ we denote the space $L_{p_1(\cdot)}(\Omega)\times\ldots\times L_{p_n(\cdot)}(\Omega)$ with the norm
$$
\|{\v}\|_{{\L}_{\overrightarrow{\p}(\cdot)}(\Omega)}=\|{\v}\|_{\overrightarrow{\p}(\cdot)}=\|v_1\|_{p_1(\cdot)}+\ldots+\|v_n\|_{p_n(\cdot)},
 $$
 $${\v}=(v_1,\ldots,v_n)\in {\L}_{\overrightarrow{\p}(\cdot)}(\Omega).
$$
And by ${\bf L}_{\overrightarrow{\bf p}(\cdot)}(\Omega)$ we denote the space $L_{p_0(\cdot)}(\Omega)\times{\L}_{\overrightarrow{\p}(\cdot)}(\Omega)$ with the norm
$$
\|{\bf v}\|_{{\bf L}_{\overrightarrow{\bf p}(\cdot)}(\Omega)}=\|v_0\|_{p_0(\cdot)}+\|{\v}\|_{\overrightarrow{\p}(\cdot)},\quad {\bf v}=(v_0,v_1,\ldots,v_n)\in {\bf L}_{\overrightarrow{\bf p}(\cdot)}(\Omega).
$$

We will assume that
$$
\mu=f-{\rm div}\, \f,\quad f\in L_1(\Omega),\quad \f=(f_1,\dots,f_n)\in {\L}_{\overrightarrow{\p}'(\cdot)}(\Omega).
$$
Introducing a notation
$\widetilde{\a}(\x,s_0,\s)=\a(\x,s_0,\s)+\f$, we obtain from the equation \eqref{ur} that
$$
{\rm div}\,\widetilde{\a}(\x,u,\nabla
u)=|u|^{p_0(\x)-2}u+b(\x,u,\nabla u)+f.
$$
Applying the inequality \eqref{ung}, we easily see that the functions $\widetilde{\a}(\x,s_0,\s)$ also satisfy assumptions of the type \eqref{us1} -- \eqref{us3}.
Thus, we will consider the equation \eqref{ur} with
\begin{equation}\label{us6}
\mu=f,\quad f\in L_1(\Omega).
\end{equation}

Let us define the function
$$
T_k(r)=
  \begin{cases}
    k & \mbox{ for } r>k , \\
    r & \mbox{ for } |r|\le k , \\
   -k & \mbox{ for } r<-k,
  \end{cases}
$$
and introduce the notation $\langle u \rangle=\int\limits_{\Omega}u d\x$.
By $\mathring
{\mathcal{T}}_{\overrightarrow{\bf p}(\cdot)}^1(\Omega)$ we denote the set of measurable functions $u:$ $\Omega\to\RR$ such that $T_k(u)\in \mathring {W}_{\overrightarrow{\bf p}(\cdot)}^1(\Omega)$ for any $k>0$.  Let $\chi_Q$ be the indicator function of a set $Q$.
For $u\in \mathring
{\mathcal{T}}_{\overrightarrow{\bf p}(\cdot)}^1(\Omega)$ and any $k>0$ we have
\begin{equation}\label{3_0}
\nabla T_k(u)=\chi_{\{\Omega:|u|<k\}}\nabla u \in  \L_{\overrightarrow{\p}(\cdot)}(\Omega).
\end{equation}

\begin{definition}\label{D1}
	An entropy solution of the problem \eqref{ur}, \eqref{gu}, \eqref{us6} is a function
   $u\in \mathring{\mathcal{T}}_{\overrightarrow{\bf p}(\cdot)}^1(\Omega)$  such that
   \begin{enumerate}
   	\item[1)] $B(\x)=b(\x,u,\nabla u)\in L_{1}(\Omega)$;
   	\item[2)] for all $k>0$ and $\xi\in C_0^1(\Omega)$ the following inequality is satisfied:
   	\begin{equation}\label{intn}
   	\langle (b(\x,u, \nabla u) +|u|^{p_0(\x)-2}u+f(\x))T_k(u-\xi) \rangle+\langle \a(\x,u,\nabla u)\cdot\nabla T_k(u-\xi)
   	\rangle\leq 0.
   	\end{equation}
   \end{enumerate}
\end{definition}

The main result of the present work is the following theorem.
\begin{theorem}\label{t1}
	Let the assumptions \eqref{us0}--\eqref{us5} be satisfied.
	Then there exists an entropy solution of the problem \eqref{ur}, \eqref{gu}, \eqref{us6}.
\end{theorem}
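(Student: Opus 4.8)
The plan is to construct the entropy solution as a limit of solutions of approximating problems with bounded (in fact smooth, integrable) right-hand sides on an exhausting sequence of bounded subdomains, and then to pass to the limit using a priori estimates in the truncated Sobolev norms together with an almost-everywhere convergence of the gradients. Concretely, let $\Omega_m\subset\Omega$ be bounded Lipschitz domains with $\overline{\Omega_m}\subset\Omega_{m+1}$ and $\bigcup_m\Omega_m=\Omega$, and set $f_m=T_m(f)\chi_{\Omega_m}\in L_\infty(\Omega_m)$. For each $m$ one solves the Dirichlet problem ${\rm div}\,\a(\x,u_m,\nabla u_m)=|u_m|^{p_0(\x)-2}u_m+b(\x,u_m,\nabla u_m)+f_m$ in $\mathring{W}_{\overrightarrow{\bf p}(\cdot)}^{1}(\Omega_m)$ (extending $u_m$ by zero to $\Omega$); existence here is standard via Galerkin or pseudomonotone-operator arguments on the bounded domain $\Omega_m$, where Lemma \ref{lemma_0} provides the compact embeddings needed to handle the lower-order term $b$ — note that $b$ has $\P(\x,\s)$-growth in the gradient, which is admissible precisely because on a bounded domain one uses $|b(\x,u_m,\nabla u_m)|\le\widehat b(\|u_m\|_\infty)(\P(\x,\nabla u_m)+\Phi_0)$ with bounded truncated data, and one should first solve for $u_m=$ solution of the problem with $b$ replaced by $T_j(b)$ and then let $j\to\infty$ using the sign condition \eqref{us5}.

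The a priori estimates come from testing the approximating equation with $T_k(u_m)$: using the coercivity \eqref{us3}, the sign condition \eqref{us5} (which kills or makes favorable the $b$-term), and $\langle|u_m|^{p_0(\x)-2}u_m\,T_k(u_m)\rangle\ge0$, one gets $\overline a\,\rho_{\overrightarrow{\p}(\cdot)}(\nabla T_k(u_m))\le k\|f\|_1+\|\phi\|_1$, hence by \eqref{st3} a bound on $\|T_k(u_m)\|_{\mathring{W}_{\overrightarrow{\bf p}(\cdot)}^{1}(\Omega)}$ uniform in $m$; one also derives a bound on $\langle|u_m|^{p_0(\x)-1}\rangle$ and, by a standard Boccardo–Gallouët level-set argument, a bound on ${\rm meas}\{|u_m|>k\}$ decaying in $k$, which yields $u_m\to u$ a.e. (up to a subsequence) for some $u\in\mathring{\mathcal T}_{\overrightarrow{\bf p}(\cdot)}^{1}(\Omega)$ and $T_k(u_m)\rightharpoonup T_k(u)$ weakly in $\mathring{W}_{\overrightarrow{\bf p}(\cdot)}^{1}(\Omega)$, with $a_i(\x,u_m,\nabla u_m)$ bounded in $L_{p_i'(\cdot)}$ on each $\Omega_m$ by \eqref{us1} and the monotone dependence $\widehat a_i(|u_m|)$ controlled on the set $\{|u_m|\le k\}$.

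The main obstacle is the strong convergence of the truncated gradients, $\nabla T_k(u_m)\to\nabla T_k(u)$ in $\L_{\overrightarrow{\p}(\cdot)}$ (equivalently, a.e.), which is what is needed to pass to the limit in the nonlinear terms $\a(\x,u_m,\nabla u_m)\cdot\nabla T_k(u_m-\xi)$ and $b(\x,u_m,\nabla u_m)$. I would obtain this by the Boccardo–Murat monotonicity trick adapted to the anisotropic variable-exponent setting: test the difference of the equations for $u_m$ with a cutoff of $T_k(u_m)-T_k(u)$ (or with $T_\eta(u_m-T_k(u))$-type functions), use \eqref{us2} to show $\int_{\Omega_m\cap\{|u_m|,|u|\le k\}}(\a(\x,u_m,\nabla u_m)-\a(\x,u_m,\nabla T_k(u)))\cdot(\nabla u_m-\nabla T_k(u))\to0$, and then invoke a pointwise lemma (the standard consequence of strict monotonicity \eqref{us2}) to extract $\nabla u_m\to\nabla u$ a.e. on $\{|u|\le k\}$; equi-integrability of $|b(\x,u_m,\nabla u_m)|$, needed for $B\in L_1(\Omega)$ and for the limit in \eqref{intn}, follows from \eqref{us4}, \eqref{us5} and the gradient estimate via a Vitali argument on the super-level sets. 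Once a.e. convergence of $\a(\x,u_m,\nabla u_m)$ and $b(\x,u_m,\nabla u_m)$ is established, together with the weak-$L^1$ control, Fatou's lemma on the (nonnegative after rearrangement) entropy terms and dominated/Vitali convergence on the remaining ones give the entropy inequality \eqref{intn} for the limit $u$, and verifying $B=b(\x,u,\nabla u)\in L_1(\Omega)$ closes the argument; the unboundedness of $\Omega$ enters only through the need to keep all constants independent of $m$ and to use the truncation $T_k$ (never $u_m$ itself) as a test function, which the exhaustion $\Omega_m$ and the compact support of $\xi$ accommodate.
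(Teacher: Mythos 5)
You are missing the key device that makes the gradient-compactness step work in the presence of the lower-order term $b$, which has \emph{natural growth} $\P(\x,\s)$ in the gradient with only the sign condition \eqref{us5}. If you test with (a cutoff of) $T_k(u_m)-T_k(u)$, the monotonicity term from \eqref{us2} comes coupled with $\int b(\x,u_m,\nabla u_m)\,(T_k(u_m)-T_k(u))\,d\x$; on $\{|u_m|<k\}$ you only know $|b(\x,u_m,\nabla u_m)|\le \widehat b(k)\bigl(\P(\x,\nabla T_k(u_m))+\Phi_0\bigr)$, and $\P(\x,\nabla T_k(u_m))$ is merely \emph{bounded} in $L_1$ by the a priori estimate, not equi-integrable — equi-integrability of that quantity is essentially what you are trying to prove. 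Hence ``bounded sequence in $L_1$ times a factor tending to $0$ a.e.'' does not give convergence of the integral, and your claim that equi-integrability of $b(\x,u_m,\nabla u_m)$ ``follows from \eqref{us4}, \eqref{us5} and the gradient estimate via a Vitali argument'' is circular: in the paper this equi-integrability (Section 5.6) is a \emph{consequence} of the strong convergence of truncated gradients, not an ingredient of its proof. The paper closes the loop by testing with $\varphi_k(z^m)\eta_R(|\x|)\eta_{h-1}(|u^m|)$, where $z^m=T_k(u^m)-T_k(u)$ and $\varphi_k(\rho)=\rho\exp(\gamma^2\rho^2)$ with $\gamma=\widehat b(k)/\overline a$, so that $\varphi_k'(\rho)-\gamma|\varphi_k(\rho)|\ge 7/8$; the $b$-contribution is then absorbed into the coercive term \eqref{us3}, and only after that does the Minty-type Lemma \ref{lemma_7} give a.e. convergence of $\nabla T_k(u^m)$. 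Without this exponential weight (or an equivalent absorption mechanism) your ``standard Boccardo--Murat trick'' does not go through.

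A second, related gap is the existence of the approximate solutions. Because of the $\P(\x,\s)$-growth of $b$, the map $u\mapsto b(\x,u,\nabla u)$ is not bounded from the energy space into its dual, so existence on $\Omega_m$ with the \emph{untruncated} $b$ is not ``standard via Galerkin or pseudomonotone-operator arguments''; your proposed inner limit $j\to\infty$ (with $b$ replaced by $T_j(b)$ at fixed $m$) runs into exactly the same a.e.-gradient-convergence difficulty as the main limit, and you have no $L_\infty$ bound on $u_m$ to shortcut it. The paper avoids the double limit altogether: it keeps the whole (possibly unbounded) domain $\Omega$, replaces $b$ by the bounded truncation $b^m=b/(1+|b|/m)\,\chi_{\Omega(m)}$ and $\a(\x,s_0,\s)$ by $\a(\x,T_m(s_0),\s)$, solves each approximate problem by a single application of the pseudomonotone surjectivity theorem (Proposition \ref{kozhelm-confir_1}, Lemma \ref{kozhelm-lemma_3}), and removes all truncations in the single limit $m\to\infty$; boundedness of subdomains is never needed, locality entering only through the cutoffs $\eta_R(|\x|)$ and the compact embedding of Lemma \ref{lemma_0} on $\Omega(R+1)$. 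Your overall architecture (approximation, a priori estimates via $T_k$, a.e. convergence, gradient compactness, passage to the limit in \eqref{intn} with Fatou on the elliptic term) matches the paper, but as written the two steps above are genuine gaps, not routine details.
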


\section{Preliminaries}\label{s4}

We denote by $L_{1,{\rm loc}}(\overline{\Omega})$ the space of functions $v:\Omega\rightarrow\mathbb{R}$ such that $v\in L_1(Q)$ for any bounded set $Q\subset \Omega$.
Analogously, we define the space $\L_{\overrightarrow{\p}(\cdot),{\rm loc}}(\overline{\Omega})$.

All constants appearing below in the paper are assumed to be positive.
Applying \eqref{sum}, for a.a.\ $\x\in\Omega$ and any $(s_0,\s)\in\mathbb{R}^{n+1}$ we deduce from \eqref{us1} the estimates
\begin{equation}\label{us1'}
\tag{$\ref*{us1}^\prime$}
|a_i(\x,s_0,\s)|^{p'_i(\x)}\leq \widehat{A}_i(|s_0|)
\left(\P(\x,\s)+\Psi_i(\x)\right),\quad
i=1,\ldots,n,
\end{equation}
with nonnegative functions $\Psi_i\in L_1(\Omega)$ and continuous nondecreasing functions $\widehat{A}_i:\mathbb{R}^+\rightarrow \mathbb{R}^+\setminus\{0\},\;i=1,\ldots,n$.

Applying \eqref{us1'}, we derive from \eqref{3_0} that for any $u\in \mathring{\mathcal{T}}_{\overrightarrow{\bf p}(\cdot)}^1(\Omega)$ and $k>0$,
\begin{equation}\label{axN}
  \chi_{\{\Omega:|u|< k\}}\a(\x,u, \nabla u)\in
\L_{\overrightarrow{\p}'(\cdot)}(\Omega).
\end{equation}

\begin{lemma}\label{lemma_1}
	If $u$ is an entropy solution of the problem \eqref{ur}, \eqref{gu}, then for any $k>0$ the following inequality is satisfied:
\begin{equation}\label{lemma_1_}
\int\limits_{\{\Omega : |u|<k\}}{\bf P}(\x,u,\nabla u)d\x+k\int\limits_{\{\Omega : |u|\geq k\}}|u|^{p_0(\x)-1} d\x\leq C_1 k+C_2.
\end{equation}

\end{lemma}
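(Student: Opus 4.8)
The plan is to test the entropy inequality \eqref{intn} with a well-chosen $\xi$ and carefully split the resulting integrals over the regions $\{|u|<k\}$ and $\{|u|\ge k\}$.

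\textbf{Step 1: Choosing the test function.} The natural choice is $\xi = 0$. Since $0 \in C_0^1(\Omega)$ is not literally admissible (we need to be a bit careful about compact support, but $\xi=0$ is fine as it trivially has compact support), the inequality \eqref{intn} with $\xi = 0$ reads
\begin{equation*}
\langle (b(\x,u,\nabla u) + |u|^{p_0(\x)-2}u + f(\x)) T_k(u)\rangle + \langle \a(\x,u,\nabla u)\cdot \nabla T_k(u)\rangle \le 0.
\end{equation*}
By \eqref{3_0}, $\nabla T_k(u) = \chi_{\{|u|<k\}}\nabla u$, so the second term equals $\int_{\{|u|<k\}} \a(\x,u,\nabla u)\cdot\nabla u\,d\x$, which by the coercivity assumption \eqref{us3} is bounded below by $\overline{a}\int_{\{|u|<k\}} \P(\x,\nabla u)\,d\x - \int_{\{|u|<k\}}\phi\,d\x \ge \overline{a}\int_{\{|u|<k\}}\P(\x,\nabla u)\,d\x - \|\phi\|_1$.

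\textbf{Step 2: Handling the zero-order and measure terms.} For the first term, I would split $\Omega$ into $\{|u|<k\}$ and $\{|u|\ge k\}$. On $\{|u|<k\}$ we have $T_k(u) = u$, so $|u|^{p_0(\x)-2}u\cdot T_k(u) = |u|^{p_0(\x)}$ and $b(\x,u,\nabla u)T_k(u) = b(\x,u,\nabla u)u \ge 0$ by the sign condition \eqref{us5}; hence this part contributes $\int_{\{|u|<k\}}|u|^{p_0(\x)}d\x$ plus a nonnegative quantity, both of which can be moved to the left. On $\{|u|\ge k\}$ we have $T_k(u) = k\,\mathrm{sign}(u)$, so $|u|^{p_0(\x)-2}u\cdot T_k(u) = k|u|^{p_0(\x)-1}$ and $b(\x,u,\nabla u)T_k(u) = k|b(\x,u,\nabla u)|\,\mathrm{sign}(u)\mathrm{sign}(u) \ge 0$ again by \eqref{us5} (since $b$ and $u$ have the same sign); these are also nonnegative and move to the left. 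The only genuinely negative contributions are $\langle f\,T_k(u)\rangle \ge -k\langle |f|\rangle = -k\|f\|_1$ since $|T_k(u)|\le k$, and the $-\|\phi\|_1$ from the coercivity term.

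\textbf{Step 3: Assembling the estimate.} Collecting everything, I obtain
\begin{equation*}
\int_{\{|u|<k\}} |u|^{p_0(\x)}d\x + \overline{a}\int_{\{|u|<k\}}\P(\x,\nabla u)\,d\x + k\int_{\{|u|\ge k\}}|u|^{p_0(\x)-1}d\x \le k\|f\|_1 + \|\phi\|_1.
\end{equation*}
Since $\overline{a}$ may be less than $1$, I replace $\overline{a}$ by $\min\{1,\overline{a}\}$ to combine the first two integrals into $\min\{1,\overline{a}\}\int_{\{|u|<k\}}{\bf P}(\x,u,\nabla u)\,d\x$, recalling ${\bf P}(\x,u,\nabla u) = \P(\x,\nabla u) + |u|^{p_0(\x)}$. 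Dividing through by $\min\{1,\overline{a}\}$ gives \eqref{lemma_1_} with $C_1 = \|f\|_1/\min\{1,\overline{a}\}$ and $C_2 = \|\phi\|_1/\min\{1,\overline{a}\}$. There is no serious obstacle here; the only point requiring a little care is justifying that $\xi=0$ is an admissible test function and confirming that all the ``good'' terms on $\{|u|\ge k\}$ really do have the right sign — this is exactly where the structural hypotheses \eqref{us5} (and its consequence \eqref{us5_}) and the fact that $T_k$ preserves sign are used. One should also note that $B(\x)=b(\x,u,\nabla u)\in L_1(\Omega)$ by part~1) of Definition~\ref{D1}, so all the integrals written above are well-defined.
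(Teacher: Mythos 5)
Your proposal is correct and follows essentially the same route as the paper: take $\xi=0$ in \eqref{intn}, use the sign condition \eqref{us5} together with $T_k(u)=u$ on $\{|u|<k\}$ and $T_k(u)=k\,\mathrm{sign}(u)$ on $\{|u|\ge k\}$, bound the $f$-term by $k\|f\|_1$, and apply the coercivity \eqref{us3} to control $\P(\x,\nabla u)$ up to $\|\phi\|_1$. Your explicit normalization by $\min\{1,\overline{a}\}$ to combine the terms into ${\bf P}(\x,u,\nabla u)$ is exactly the step the paper absorbs into the constants $C_1,C_2$.
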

\begin{proof}
According to the inequality \eqref{intn}, we have for $\xi=0$,
\begin{gather*}
\int\limits_{\Omega}(|u|^{p_0(\x)-2}u + b(\x,u,\nabla u))T_k(u) d\x+\int\limits_{\{\Omega : |u|<k\}}\a(\x,u,\nabla
u)\cdot \nabla u d\x= \\=-\int\limits_{\Omega}f(\x)
T_{{k}}(u)d\x\leq k\|f\|_1.
\end{gather*}
Applying the inequalities \eqref{us3}, \eqref{us5}, we get
\begin{gather*}
k\int\limits_{\{\Omega : |u|\geq k\}}|u|^{p_0(\x)-1}d\x+\int\limits_{\{\Omega : |u|<k\}} |u|^{p_0(\x)}d\x+\overline{a}\int\limits_{\{\Omega : |u|<k\}} \P(\x,\nabla u)d\x \leq k\|f\|_1+\|\phi \|_1.
\end{gather*}
Hence, we obtain \eqref{lemma_1_}.
\end{proof}

\begin{lemma}\label{lemma_2}
Let $v:\Omega\rightarrow\mathbb{R}$ be a measurable function such that for all $k>0$ there holds
\begin{equation}\label{lemma_2_1}
\int\limits_{\{\Omega : |v|\geq k\}}|v|^{p_0(\x)-1} d\x \leq C_3+C_4/k.
\end{equation}
Then
\begin{equation}\label{lemma_2_2}
{\rm meas}\,\{\Omega :|v|\geq  k\}\rightarrow 0, \quad k\rightarrow \infty;
\end{equation}
\begin{equation}\label{lemma_2_3}
\forall k>0\quad|v|^{p_0(\x)-1}\chi_{\{\Omega:|v|\geq k\}}\in L_1(\Omega).
\end{equation}
\end{lemma}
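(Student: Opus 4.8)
The plan is to obtain both assertions directly from the bound \eqref{lemma_2_1}, using only the structural fact $p_0^->1$ (which holds since $p_0(\cdot)\in C^+(\overline{\Omega})$, so that $p_0^-=\inf_{\x\in\Omega}p_0(\x)>1$).

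Assertion \eqref{lemma_2_3} is in fact immediate: for each fixed $k>0$ the number $C_3+C_4/k$ on the right of \eqref{lemma_2_1} is finite, while $|v|^{p_0(\x)-1}\chi_{\{\Omega:|v|\geq k\}}$ is a nonnegative measurable function (because $v$ is measurable and $p_0$ is continuous). Hence \eqref{lemma_2_1} says precisely that this function belongs to $L_1(\Omega)$.

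For \eqref{lemma_2_2} I would estimate the integrand from below on the super-level set. Since we only need the limit as $k\to\infty$, it suffices to take $k\geq 1$. Then on $\{\Omega:|v|\geq k\}$ we have $|v|\geq 1$, so the map $t\mapsto|v|^{t}$ is nondecreasing there, which yields $|v|^{p_0(\x)-1}\geq|v|^{p_0^--1}\geq k^{p_0^--1}$ a.e.\ on that set. Integrating this pointwise bound over $\{\Omega:|v|\geq k\}$ and invoking \eqref{lemma_2_1},
$$
k^{p_0^--1}\,{\rm meas}\,\{\Omega:|v|\geq k\}\leq\int\limits_{\{\Omega:|v|\geq k\}}|v|^{p_0(\x)-1}\,d\x\leq C_3+\frac{C_4}{k},
$$
so that ${\rm meas}\,\{\Omega:|v|\geq k\}\leq(C_3+C_4)\,k^{1-p_0^-}$ for every $k\geq 1$. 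As $p_0^->1$, the exponent $1-p_0^-$ is negative and the right-hand side tends to $0$ as $k\to\infty$, which is \eqref{lemma_2_2}.

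No genuine obstacle is expected here; the only points needing a little care are that the inequality $|v|^{p_0(\x)-1}\geq k^{p_0^--1}$ relies on $|v|\geq 1$ (hence on restricting to $k\geq 1$, which is harmless since $k\to\infty$), and that the decay of ${\rm meas}\,\{\Omega:|v|\geq k\}$ rests entirely on the assumption $p_0^->1$.
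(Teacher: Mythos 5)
Your proposal is correct and follows essentially the same route as the paper: \eqref{lemma_2_3} is read off directly from the finiteness of the bound in \eqref{lemma_2_1}, and \eqref{lemma_2_2} follows from the Chebyshev-type estimate $k^{p_0^--1}\,{\rm meas}\,\{\Omega:|v|\geq k\}\leq C_3+C_4$ for $k\geq 1$ together with $p_0^->1$. Your write-up merely spells out the intermediate pointwise inequality $|v|^{p_0(\x)-1}\geq k^{p_0^--1}$ on the super-level set, which the paper leaves implicit.
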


\begin{proof}
The fact \eqref{lemma_2_3} is a trivial consequence of \eqref{lemma_2_1}. From \eqref{lemma_2_1} we have
\begin{gather*}
k^{p^-_0-1}{\rm meas}\{\Omega : |v|\geq k\}\leq C_2,\quad k\geq 1,
\end{gather*}
and hence we get \eqref{lemma_2_2}.
\end{proof}

\begin{remark}\label{remark_1}
	If $u$ is an entropy solution of the problem \eqref{ur}, \eqref{gu}, then from Lemmas \ref{lemma_1}, \ref{lemma_2} it follows that
\begin{equation}\label{remark_1_1}
{\rm meas}\,\{\Omega :|u|\geq  k\}\rightarrow 0, \quad k\rightarrow \infty;
\end{equation}
\begin{equation}\label{remark_1_2}\forall k>0\quad |u|^{p_0(\x)-1}\chi_{\{\Omega:|u|\geq k\}}\in L_1(\Omega).
\end{equation}
Moreover,
\begin{equation}\label{remark_1_3}
\forall k>0\quad |u|^{p_0(\x)}\chi_{\{\Omega:|u|< k\}}\in L_1(\Omega).
\end{equation}
\end{remark}

Let $Q\subset\mathbb{R}^n$ be an arbitrary domain.

\begin{lemma}\label{lemma_4}
Let $v^j,\, j\in \mathbb{N}$, and $v$ be functions from $L_{p(\cdot)}(Q)$ such that
$\{v^j\}_{j\in \mathbb{N}}$ is bounded in
$L_{p(\cdot)}(Q)$ and
$$
v^j\rightarrow
v \quad\mbox{a.e.\ in}\quad Q,\quad j\rightarrow\infty.
$$
Then
$$
v^j\rightharpoonup v
\quad\mbox{weakly in}\quad L_{p(\cdot)}(Q),\quad j\rightarrow\infty.
$$
\end{lemma}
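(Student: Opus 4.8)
The plan is to use the standard fact that a bounded sequence in a reflexive Banach space is weakly precompact, together with a uniqueness argument to identify the weak limit. First I would note that $L_{p(\cdot)}(Q)$ is reflexive (stated earlier, following \cite{Diening}), so the bounded sequence $\{v^j\}$ has a weakly convergent subsequence $v^{j_m}\rightharpoonup w$ in $L_{p(\cdot)}(Q)$ for some $w\in L_{p(\cdot)}(Q)$. The whole point is then to show $w=v$ a.e.\ in $Q$, and that every weakly convergent subsequence has the same limit $v$, which by a standard subsequence argument forces $v^j\rightharpoonup v$ for the full sequence.

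To identify the limit I would test weak convergence against functions $g\in L_{p'(\cdot)}(Q)$ supported on a set of finite measure, in fact against $g=\chi_E$ for an arbitrary measurable $E\subset Q$ with ${\rm meas}\,E<\infty$; such $\chi_E$ lies in $L_{p'(\cdot)}(Q)$ since $p'(\cdot)$ is bounded on $Q$ (as $p(\cdot)\in C^+(\overline Q)$). On the one hand, weak convergence of the subsequence gives $\int_E v^{j_m}\,d\x\to\int_E w\,d\x$. On the other hand, I want to pass to the limit in $\int_E v^{j_m}\,d\x$ using the a.e.\ convergence $v^{j_m}\to v$; the natural tool is Vitali's convergence theorem, so the key is to establish that the family $\{v^j\}$ is uniformly integrable on sets of finite measure. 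This uniform integrability follows from the boundedness in $L_{p(\cdot)}(Q)$: by the H\"older inequality \eqref{gel} and the norm–modular relations \eqref{st2}, for any measurable $F$ one has $\int_F|v^j|\,d\x\le 2\|v^j\|_{p(\cdot),Q}\|\chi_F\|_{p'(\cdot),Q}$, and $\|\chi_F\|_{p'(\cdot),Q}\to 0$ as ${\rm meas}\,F\to 0$ again via \eqref{st2} (since $\rho_{p'(\cdot),Q}(\chi_F)={\rm meas}\,F$). Hence Vitali applies and $\int_E v^{j_m}\,d\x\to\int_E v\,d\x$, so $\int_E w\,d\x=\int_E v\,d\x$ for all $E$ of finite measure; since $Q$ is $\sigma$-finite this yields $w=v$ a.e.

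Finally I would argue that the limit is independent of the subsequence: any weakly convergent subsequence of $\{v^j\}$ converges weakly to a limit which, by the argument above, must equal $v$; therefore every subsequence of $\{v^j\}$ has a further subsequence converging weakly to $v$, and this implies (in the reflexive, hence in particular in the Banach, setting, using that the dual is a fixed space) that $v^j\rightharpoonup v$ weakly in $L_{p(\cdot)}(Q)$. The main obstacle is the passage to the limit in the integrals, i.e.\ verifying the uniform-integrability hypothesis of Vitali's theorem; once that is in hand from \eqref{gel} and \eqref{st2}, the rest is routine. (Alternatively, one may invoke directly the classical lemma that in $L^q$-type spaces a bounded sequence converging a.e.\ converges weakly; here I have sketched the self-contained version.)
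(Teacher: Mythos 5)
Your argument is correct, but it is worth noting that the paper does not actually prove Lemma \ref{lemma_4}: it only refers to \cite{Benboub_2011} for the case of a bounded domain and asserts that the proof "remains valid for unbounded domains". Your proposal supplies a self-contained proof of exactly the standard type, and in fact it is the argument that justifies the unbounded-domain extension: reflexivity of $L_{p(\cdot)}(Q)$ (which needs only $1<p^-\leq p^+<\infty$, not boundedness of $Q$) gives weak subsequential limits; testing against $\chi_E$ for measurable $E\subset Q$ of finite measure (note $\rho_{p'(\cdot),Q}(\chi_E)={\rm meas}\,E<\infty$, so $\chi_E\in L_{p'(\cdot)}(Q)$ simply because $E$ has finite measure) together with Vitali's theorem — the uniform integrability coming from \eqref{gel} and the norm–modular inequalities, since $\|\chi_F\|_{p'(\cdot),Q}\to 0$ as ${\rm meas}\,F\to 0$ — identifies every weak subsequential limit with $v$ via $\sigma$-finiteness of $Q\subset\mathbb{R}^n$ and local integrability of $w-v$; the subsequence principle then upgrades this to weak convergence of the full sequence. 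All steps check out (in particular Vitali is applied only on sets of finite measure, so the boundedness restriction in the paper's Lemma \ref{lemma_9} is irrelevant), so the only difference from the paper is that you prove what the paper delegates to a citation, which is arguably a gain in completeness at the cost of a page of routine analysis.
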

The proof of Lemma \ref{lemma_4} for bounded domains is given in \cite{Benboub_2011} and it remains valid for unbounded domains, too.

\begin{lemma}\label{lemma_10}
Let $g^j,\, j\in \mathbb{N}$, and $g$ be functions from $L_1(Q)$ such that
$$
g^j\rightarrow g \quad\mbox{stronly in}\quad L_1(Q),\quad j\rightarrow\infty,
$$
and let $v^j,\, j\in \mathbb{N}$, and $v$ be measurable functions in $Q$ such that
$$
v^j\rightarrow v \quad\mbox{a.e.\ in}\quad Q,\quad j\rightarrow\infty;
$$
$$
|v^j|\leq |g^j|,\quad j\in \mathbb{N},\quad |v|\leq |g|\quad\mbox{a.e.\ in}\quad Q.
$$
Then
$$
v^j\rightarrow v \quad\mbox{stronly in}\quad L_1(Q),\quad j\rightarrow\infty.
$$
\end{lemma}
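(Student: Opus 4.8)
The statement is a version of the generalized Lebesgue dominated convergence theorem in which the dominating function is not fixed but varies and converges in $L_1(Q)$. The plan is to deduce it from Fatou's lemma applied to a suitable nonnegative sequence. The only point requiring care is that strong convergence $g^j\to g$ in $L_1(Q)$ does not entail a.e.\ convergence of the $g^j$; this forces an argument through subsequences, but it is otherwise completely routine and is insensitive to whether $Q$ is bounded.

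First I would record the preliminary facts: $|v|\le|g|$ and $|v^j|\le|g^j|$ together with $g,g^j\in L_1(Q)$ give $v,v^j\in L_1(Q)$, so that all $L_1$-norms below are finite; and strong convergence implies $\|g^j\|_{1,Q}\to\|g\|_{1,Q}$ by the reverse triangle inequality. Next I would reduce to subsequences: it suffices to show that every subsequence of $\{v^j\}$ admits a further subsequence converging to $v$ strongly in $L_1(Q)$ — indeed, if $\|v^j-v\|_{1,Q}\not\to0$ one could extract a subsequence staying bounded away from $v$, contradicting this property. So fix an arbitrary subsequence of $\{v^j\}$; the subsequence of $\{g^j\}$ with the same indices still converges to $g$ in $L_1(Q)$, hence it has a further subsequence — not relabelled — along which, in addition, $g^j\to g$ a.e.\ in $Q$ (while $v^j\to v$ a.e.\ persists along any subsequence).

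Along this subsequence set $h^j=|g^j|+|g|-|v^j-v|$. Since $|v^j-v|\le|v^j|+|v|\le|g^j|+|g|$, we have $h^j\ge0$ a.e.\ in $Q$; and since $v^j\to v$ a.e.\ and $g^j\to g$ a.e., it follows that $h^j\to 2|g|$ a.e.\ in $Q$. Fatou's lemma then gives
\begin{equation*}
2\int\limits_Q|g|\,d\x=\int\limits_Q\liminf_{j\to\infty}h^j\,d\x\le\liminf_{j\to\infty}\int\limits_Q h^j\,d\x
=2\int\limits_Q|g|\,d\x-\limsup_{j\to\infty}\int\limits_Q|v^j-v|\,d\x,
\end{equation*}
where in the last equality I used $\|g^j\|_{1,Q}\to\|g\|_{1,Q}$. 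Hence $\limsup_{j\to\infty}\|v^j-v\|_{1,Q}\le0$, i.e.\ $v^j\to v$ strongly in $L_1(Q)$ along the chosen subsequence, which by the subsequence principle yields the claim for the full sequence.

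There is essentially no hard obstacle here; the only thing not to overlook is exactly the failure of a.e.\ convergence of $g^j$ under mere $L_1$-convergence, which is why the Fatou computation is performed along a subsequence on which a.e.\ convergence of both $v^j$ and $g^j$ is available, and the subsequence principle is then invoked to recover convergence of the whole sequence.
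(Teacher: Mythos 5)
Your proof is correct: the Fatou argument applied to $h^j=|g^j|+|g|-|v^j-v|\ge 0$, combined with the subsequence principle to handle the fact that $g^j\to g$ in $L_1(Q)$ only yields a.e.\ convergence along a subsequence, is the standard and complete proof of this generalized dominated convergence lemma. Note that the paper states Lemma \ref{lemma_10} without any proof, treating it as a known variant of Lebesgue's theorem, so there is nothing in the text to compare against; your write-up supplies exactly the routine argument that is implicitly being invoked, and it is valid for bounded or unbounded $Q$ alike.
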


\begin{lemma}\label{lemma_6}
 If $u$ is an entropy solution of the problem \eqref{ur}, \eqref{gu}, then the inequality \eqref{intn} holds true for any function $\xi\in\mathring
{W}_{\overrightarrow{\bf p}(\cdot)}^1(\Omega)\cap L_{\infty}(\Omega)$.
\end{lemma}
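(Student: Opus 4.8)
The plan is to upgrade the admissible test functions in \eqref{intn} from $\xi\in C_0^1(\Omega)$ to $\xi\in\mathring{W}_{\overrightarrow{\bf p}(\cdot)}^1(\Omega)\cap L_\infty(\Omega)$ by a density/truncation argument. Fix such a $\xi$ and $k>0$, and let $M=\|\xi\|_{\infty,\Omega}$. By the very definition of $\mathring{W}_{\overrightarrow{\bf p}(\cdot)}^1(\Omega)$ as the completion of $C_0^\infty(\Omega)$, there is a sequence $\xi^j\in C_0^\infty(\Omega)$ with $\xi^j\to\xi$ in $\mathring{W}_{\overrightarrow{\bf p}(\cdot)}^1(\Omega)$; passing to a subsequence we may assume $\xi^j\to\xi$ and $\nabla\xi^j\to\nabla\xi$ a.e.\ in $\Omega$. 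Replacing $\xi^j$ by $T_{M+1}(\xi^j)$ (which is still an admissible Lipschitz function with compact support, and still converges in $\mathring{W}_{\overrightarrow{\bf p}(\cdot)}^1$ and a.e., since $T_{M+1}$ is $1$-Lipschitz and $|\xi|\le M$), we may additionally assume $\|\xi^j\|_{\infty,\Omega}\le M+1$ for all $j$. The truncation $T_{M+1}(\xi^j)$ is only Lipschitz, not $C_0^1$, but \eqref{intn} for it follows from the $C_0^1$ case by a further standard mollification; alternatively one notes the class of admissible $\xi$ in \eqref{intn} is easily seen to include $C_0^{0,1}(\Omega)$.

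The main step is to pass to the limit in each term of \eqref{intn} written for $\xi^j$. For the gradient term, write $\nabla T_k(u-\xi^j)=\chi_{\{|u-\xi^j|<k\}}(\nabla u-\nabla\xi^j)$. On the set where this is nonzero we have $|u|<k+M+1$, so $\a(\x,u,\nabla u)\chi_{\{|u|<k+M+1\}}\in\L_{\overrightarrow{\p}'(\cdot)}(\Omega)$ by \eqref{axN}; meanwhile $\nabla\xi^j\to\nabla\xi$ in $\L_{\overrightarrow{\p}(\cdot)}(\Omega)$, and $\chi_{\{|u-\xi^j|<k\}}\nabla u\to\chi_{\{|u-\xi|<k\}}\nabla u$ in $\L_{\overrightarrow{\p}(\cdot)}(\Omega)$ by dominated convergence (domination by $\chi_{\{|u|<k+M+1\}}|\nabla u|\in\L_{\overrightarrow{\p}(\cdot)}(\Omega)$, using that $\{|u-\xi|=k\}$ may be assumed to have the good property after, if necessary, replacing $k$ by nearby values — more simply, $\chi_{\{|u-\xi^j|<k\}}\to\chi_{\{|u-\xi|<k\}}$ a.e.\ on $\{|u-\xi|\ne k\}$, and one argues the boundary set contributes nothing since $\nabla u=\nabla\xi$ a.e.\ on $\{u-\xi=\mathrm{const}\}$). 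Hölder's inequality \eqref{gel} then gives convergence of $\langle\a(\x,u,\nabla u)\cdot\nabla T_k(u-\xi^j)\rangle$.

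For the zeroth-order term, $T_k(u-\xi^j)\to T_k(u-\xi)$ a.e.\ with $|T_k(u-\xi^j)|\le k$, so by Remark \ref{remark_1} and Lemma \ref{lemma_10} (with dominating functions built from $k(|u|^{p_0(\x)-1}\chi_{\{|u|\ge1\}}+|u|^{p_0(\x)}\chi_{\{|u|<1\}}+|f|+1)\in L_1(\Omega)$, noting $B=b(\x,u,\nabla u)\in L_1(\Omega)$ by part 1) of Definition \ref{D1}) one gets $(b(\x,u,\nabla u)+|u|^{p_0(\x)-2}u+f)T_k(u-\xi^j)\to(b(\x,u,\nabla u)+|u|^{p_0(\x)-2}u+f)T_k(u-\xi)$ in $L_1(\Omega)$, hence convergence of the corresponding integral. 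Combining, inequality \eqref{intn} passes to the limit and holds for $\xi$. The main obstacle is the careful handling of the truncation set $\{|u-\xi^j|<k\}$ in the gradient term — in particular controlling the indicator functions near $\{|u-\xi|=k\}$ and securing a uniform $\L_{\overrightarrow{\p}(\cdot)}(\Omega)$-dominant; this is where the bound $\|\xi^j\|_{\infty}\le M+1$ together with \eqref{axN} does the work.
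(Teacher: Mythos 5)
Your strategy coincides with the paper's (approximate $\xi$ by $C_0^\infty$ functions with a uniform $L_\infty$ bound, then pass to the limit term by term, using \eqref{axN} for the gradient term and the integrability facts of Remark \ref{remark_1} for the zeroth-order terms), and your treatment of the gradient term -- including the care about the level set $\{|u-\xi|=k\}$, where one uses that $\nabla u-\nabla\xi=0$ a.e. -- can be made rigorous; the paper achieves the same thing by pairing the fixed function $\a(\x,u,\nabla u)\chi_{\{\Omega:|u|<\widehat k\}}\in\L_{\overrightarrow{\p}'(\cdot)}(\Omega)$ with the weak convergence $\nabla T_k(u-\xi^j)\rightharpoonup\nabla T_k(u-\xi)$ supplied by Lemma \ref{lemma_4}. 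The genuine gap is in the term $\langle|u|^{p_0(\x)-2}u\,T_k(u-\xi^j)\rangle$: your proposed majorant $k\bigl(|u|^{p_0(\x)-1}\chi_{\{|u|\ge1\}}+|u|^{p_0(\x)}\chi_{\{|u|<1\}}+|f|+1\bigr)$ is in general \emph{not} in $L_1(\Omega)$, because $\Omega$ is an arbitrary, possibly unbounded domain of infinite measure, so the constant $1$ is not integrable; and you cannot drop it, since on $\{|u|<1\}$ Remark \ref{remark_1} only gives $|u|^{p_0(\x)}\in L_1$, which does not control $|u|^{p_0(\x)-1}$ (take $p_0\equiv2$ and $u\sim|\x|^{-\alpha}$ near infinity with $n/2<\alpha\le n$). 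Hence dominated convergence cannot be applied as you state, and this is exactly the point where unboundedness of $\Omega$ -- the main novelty of the paper -- bites.

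The repair is the step you skipped, and it is the only nontrivial part of this term in the paper's proof: choose $k_1>\sup_j\|\xi^j\|_\infty$ and split the integral over $\{\Omega:|u-\xi^j|<k_1\}$ and $\{\Omega:|u-\xi^j|\ge k_1\}$. On the first set $|u|<\widehat k_1$ (with $\widehat k_1=k_1+\sup_j\|\xi^j\|_\infty$) and one bounds $|T_k(u-\xi^j)|\le|u|+|\xi^j|$, so the integrand is majorized by $|u|^{p_0(\x)}\chi_{\{|u|<\widehat k_1\}}+|u|^{p_0(\x)-1}\chi_{\{|u|<\widehat k_1\}}|\xi^j|$; the first piece is in $L_1(\Omega)$ by \eqref{remark_1_3}, and the second is handled via H\"older, since $|u|^{p_0(\x)-1}\chi_{\{|u|<\widehat k_1\}}\in L_{p_0'(\cdot)}(\Omega)$ while $\xi^j\to\xi$ in $L_{p_0(\cdot)}(\Omega)$, after which Lemma \ref{lemma_10} gives the limit. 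On the second set $|u|\ge\widetilde k_1:=k_1-\sup_j\|\xi^j\|_\infty>0$, so the integrand is bounded by $k\,|u|^{p_0(\x)-1}\chi_{\{|u|\ge\widetilde k_1\}}\in L_1(\Omega)$ by \eqref{remark_1_2}, and the Lebesgue theorem applies. (If $\Omega$ were bounded your majorant would be fine, but boundedness is precisely what is not assumed.) With this correction, the remainder of your argument, including the truncation $T_{M+1}(\xi^j)$ used to secure the uniform $L_\infty$ bound of the approximating sequence, matches the paper's proof.
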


\begin{proof}
	By the definition of the space $\mathring
{W}_{\overrightarrow{\bf p}(\cdot)}^1(\Omega)\cap L_{\infty}(\Omega)$ there exists a sequence $\{\xi^m\}_{m\in\mathbb{N}}\in C_0^{\infty}(\Omega)$ bounded in $L_{\infty}(\Omega)$ and satisfying \begin{equation}\label{lem_6(0)}
  \nabla \xi^m\rightarrow \nabla\xi\;\text{ in }\;\L_{\overrightarrow{\p}(\cdot)}(\Omega),\quad
  \xi^m\rightarrow \xi\;\text{ in }\;L_{{p}_0(\cdot)}(\Omega),\quad m\rightarrow\infty.
\end{equation}
Thus, we get the convergences $\xi^m\rightarrow \xi$, $\nabla \xi^m\rightarrow \nabla\xi$ in $L_{1,{\rm loc}}(\overline{\Omega})$ as $m\rightarrow\infty$, and hence we can extract a subsequence (denoted by the same indexes) such that $\xi^m\rightarrow \xi,$   $\nabla \xi^m\rightarrow \nabla\xi$  a.e.\ in $\Omega$ as $m\rightarrow\infty$.
Therefore, for any $k>0$ there are the convergences
\begin{equation}\label{lem_6(1)}
T_k(u-\xi^m)\rightarrow T_k(u-\xi),\quad\nabla T_k(u-\xi^m)\to \nabla T_k(u-\xi)\quad \mbox{a.e. \; in} \quad\Omega,\quad m\rightarrow\infty.
\end{equation}

Let $\widehat{k}=k+\sup\limits_{m\in
\mathbb{N}}(\|\xi^m\|_{\infty},\|\xi\|_{\infty})$. Then
$$
|\nabla
T_k(u-\xi^m)|\le |\nabla T_{\widehat{k}}(u)|+|\nabla \xi^m|,\quad
\x\in \Omega,\quad m\in \mathbb{N}.
$$
Since the convergent subsequence $\{\nabla \xi^m\}_{m \in \mathbb{N}}$ is bounded in
$\L_{\overrightarrow{\p}(\cdot)}(\Omega)$, we get from
\eqref{3_0} the boundedness of the norms $\|\nabla
T_k(u-\xi^m)\|_{\overrightarrow{\p}(\cdot)},\;m\in \mathbb{N}$.
Applying \eqref{lem_6(1)} and using Lemma \ref{lemma_4}, for any $k>0$ we have
  \begin{equation}\label{lem_6(2)}
  \nabla T_k(u-\xi^m)\rightharpoonup \nabla T_k(u-\xi)\quad \text{in} \quad\L_{{\overrightarrow{\p}(\cdot)}}(\Omega),\quad m\rightarrow\infty.
  \end{equation}

Let us now pass to the limit as $m\rightarrow\infty$ in the inequality
\begin{gather}
\int\limits_{\Omega} (b(\x,u,\nabla u)+f)T_k(u-\xi^m)d\x+\int\limits_{\Omega} \a(\x,u,\nabla u)\cdot\nabla
T_k(u-\xi^m)d\x+\nonumber\\+\int\limits_{\Omega} |u|^{p_0(\x)-2}uT_k(u-\xi^m)d\x\leq 0.\label{lem_6(3)}
\end{gather}
Since $b(\x,u,\nabla u),\;f\in L_{1}(\Omega)$ (see Definition \ref{D1}), applying \eqref{lem_6(1)} and using the Lebesgue theorem, we can pass to the limit as $m\rightarrow
\infty$ in the first summand in \eqref{lem_6(3)}.
Since
$\a(\x,u,\nabla u)\chi_{\{\Omega:|u|<\widehat{k}\}}\in \L_{\overrightarrow{\p}'(\cdot)}(\Omega)$ (see \eqref{axN}), we apply \eqref{lem_6(2)} to establish that the second summand in \eqref{lem_6(3)} also has a limit as $m\rightarrow \infty$. Let $k_1>\sup\limits_{m\in
\mathbb{N}}\|\xi^m\|_{\infty}$.
Finally, let us split the third summand in \eqref{lem_6(3)} as
\begin{gather*}
\int\limits_{\Omega} |u|^{p_0(\x)-2}uT_k(u-\xi^m)d\x=\int\limits_{\{\Omega:|u-\xi^m|<k_1\}}|u|^{p_0(\x)-2}uT_k(u-\xi^m)d\x+\\ +\int\limits_{\{\Omega:|u-\xi^m|\geq k_1 \}} |u|^{p_0(\x)-2}u k\,{\rm sign}\,(u-\xi^m)d\x=I_1^m+I_2^m.
\end{gather*}
In view of \eqref{remark_1_3},
$$
|u|^{p_0(\x)-1}\chi_{\{\Omega:|u-\xi^m|<k_1\}}|T_k(u-\xi^m)|\leq |u|^{p_0(\x)-1}\chi_{\{\Omega:|u|<\widehat{k}_1\}}(|u|+|\xi^m|)\in L_1(\Omega),\quad m\in \mathbb{N}.
$$
Thus, we use \eqref{lem_6(0)} and deduce from Lemma \ref{lemma_10} that
 $$\lim\limits_{m\to \infty}I_1^m\rightarrow\int\limits_{\{\Omega:|u-\xi|<k_1\}}|u|^{p_0(\x)-2}uT_k(u-\xi)d\x.$$

Due to \eqref{remark_1_2},
$$
|u|^{p_0(\x)-1}\chi_{\{\Omega:|u-\xi^m|\geq k_1\}}\leq |u|^{p_0(\x)-1}\chi_{\{\Omega:|u|\geq\widetilde{k}_1\}}\in L_1(\Omega),\quad m\in \mathbb{N},
$$
$\widetilde{k}_1=k_1-\sup\limits_{m\in
\mathbb{N}}\|\xi^m\|_{\infty}$.
Hence,
we obtain from the Lebesgue theorem that
$$
\lim\limits_{m\to \infty}I_2^m\rightarrow\int\limits_{\{\Omega:|u-\xi|\geq k_1\}} |u|^{p_0(\x)-2}u k\,{\rm sign}\,(u-\xi)d\x.
$$
Therefore, passing to the limit in \eqref{lem_6(3)}, we derive the inequality \eqref{intn}.
\end{proof}

\begin{lemma}\label{lemma_3}
Let $u$ be an entropy solution of the problem \eqref{ur}, \eqref{gu}. Then for all $k>0$ there holds
\begin{equation}\label{lemma_3_}
\lim_{h\to \infty}\int\limits_{\{\Omega : h\leq |u|<k+h\}}\P(\x,\nabla u) d\x=0.
\end{equation}
\end{lemma}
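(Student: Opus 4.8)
The plan is to use $\xi=T_h(u)$ as a test function in the entropy inequality \eqref{intn}. Since $u\in\mathring{\mathcal{T}}_{\overrightarrow{\bf p}(\cdot)}^1(\Omega)$, the truncation $T_h(u)$ belongs to $\mathring{W}_{\overrightarrow{\bf p}(\cdot)}^1(\Omega)\cap L_\infty(\Omega)$, so this choice is admissible by Lemma \ref{lemma_6}. The elementary but decisive remark is that $T_k(u-T_h(u))=T_{k+h}(u)-T_h(u)$; hence by \eqref{3_0},
\[
\nabla T_k(u-T_h(u))=\chi_{\{\Omega\,:\,h\le|u|<k+h\}}\,\nabla u ,
\]
and moreover $T_k(u-T_h(u))$ has the same sign as $u$ and vanishes where $|u|\le h$.

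Substituting $\xi=T_h(u)$ into \eqref{intn}, the diffusion term becomes $\int_{\{\Omega\,:\,h\le|u|<k+h\}}\a(\x,u,\nabla u)\cdot\nabla u\,d\x$, which by the coercivity assumption \eqref{us3} is bounded below by $\overline{a}\int_{\{\Omega\,:\,h\le|u|<k+h\}}\P(\x,\nabla u)\,d\x-\int_{\{\Omega\,:\,h\le|u|<k+h\}}\phi\,d\x$. Therefore \eqref{intn} yields
\[
\overline{a}\int\limits_{\{\Omega\,:\,h\le|u|<k+h\}}\P(\x,\nabla u)\,d\x\le\int\limits_{\{\Omega\,:\,h\le|u|<k+h\}}\phi\,d\x-\big\langle(b(\x,u,\nabla u)+|u|^{p_0(\x)-2}u+f)\,T_k(u-T_h(u))\big\rangle .
\]

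The next step is to discard the two nonnegative contributions on the right-hand side. Since $T_k(u-T_h(u))$ has the same sign as $u$, assumption \eqref{us5} gives $b(\x,u,\nabla u)\,T_k(u-T_h(u))\ge0$, while $|u|^{p_0(\x)-2}u\,T_k(u-T_h(u))\ge0$ is immediate; both are integrable (by condition 1) of Definition \ref{D1} and by \eqref{remark_1_2}, using $|T_k(u-T_h(u))|\le k\,\chi_{\{|u|\ge h\}}$), so they may be omitted. For the remaining term we use $|\langle f\,T_k(u-T_h(u))\rangle|\le k\int_{\{\Omega\,:\,|u|\ge h\}}|f|\,d\x$, which together with $\{h\le|u|<k+h\}\subset\{|u|\ge h\}$ and $\phi\ge0$ gives
\[
\overline{a}\int\limits_{\{\Omega\,:\,h\le|u|<k+h\}}\P(\x,\nabla u)\,d\x\le\int\limits_{\{\Omega\,:\,|u|\ge h\}}\phi\,d\x+k\int\limits_{\{\Omega\,:\,|u|\ge h\}}|f|\,d\x .
\]

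To conclude, I would invoke Remark \ref{remark_1}, according to which ${\rm meas}\,\{\Omega:|u|\ge h\}\to0$ as $h\to\infty$; since $\phi,f\in L_1(\Omega)$, absolute continuity of the Lebesgue integral forces both integrals on the right-hand side to tend to $0$, and then $\P(\x,\nabla u)\ge0$ yields \eqref{lemma_3_}. I do not foresee a real obstacle in this argument — the only points needing care are the identity $T_k(u-T_h(u))=T_{k+h}(u)-T_h(u)$ together with the resulting gradient formula, and the verification that every term of \eqref{intn} at $\xi=T_h(u)$ is finite, which is what makes discarding the nonnegative terms legitimate.
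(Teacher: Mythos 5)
Your proposal is correct and follows essentially the same route as the paper: test \eqref{intn} with $\xi=T_h(u)$ (the paper's $T_{k,h}(u)=T_k(u-T_h(u))$), use the sign conditions \eqref{us5} and $|u|^{p_0(\x)-2}u\,T_{k,h}(u)\ge0$ to discard the zero-order terms, apply the coercivity \eqref{us3}, and let $h\to\infty$ via \eqref{remark_1_1} and the absolute continuity of the integrals of $f,\phi\in L_1(\Omega)$. The only cosmetic difference is that the paper keeps the set $\{h\le|u|<k+h\}$ for the $\phi$-term while you enlarge it to $\{|u|\ge h\}$, which changes nothing.
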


\begin{proof}
Consider the function $T_{k,h}(r)=T_k(r-T_h(r))$. Evidently,
\begin{equation*}T_{k,h}(r)=
  \begin{cases}
    0 & \mbox{ for }|r|<h , \\
        r-h\,{\rm sign}\, r & \mbox{ for }h\leq|r|< k+h, \\
    k\,{\rm sign}\, r & \mbox{ for }|r|\geq k+h.
  \end{cases}
  \end{equation*}
Denoting $\xi=T_{h} (u)$ in \eqref{intn} and taking into account \eqref{us5}, we get
\begin{gather*}\int\limits_{\{\Omega : h\leq|u|<k+h\}} \a(\x,u,\nabla u)\cdot\nabla u d\x+k \int\limits_{\{\Omega :|u|\geq k+h\}}\left(|b(\x,u,\nabla u)|+|u|^{p_0(\x)-1}\right) d\x+\\
+\int\limits_{\{\Omega : h\leq|u|< k+h\}}\left(b(\x,u,\nabla u)+|u|^{p_0(\x)-2}u\right)(u-h\,{\rm sign}\,u) d\x\leq k\int\limits_{\{\Omega : |u|\geq h\}}|f|d\x.\nonumber
\end{gather*}
In view of \eqref{us5}, for $h\leq |u|$ the following inequality is satisfied:
$$
(b(\x,u,\nabla u)+|u|^{p_0(\x)-2}u)(u-h\,{\rm sign}\,u)\ge 0.
$$
Combining the last two inequalities, we deduce that
\begin{gather*}\int\limits_{\{\Omega : h\leq|u|<k+h\}} \a(\x,u,\nabla u)\cdot\nabla u d\x+\\
+k\int\limits_{\{\Omega : |u|\geq k+h\}}\left(|b(\x,u,\nabla u)|+|u|^{p_0(\x)-1}\right) d\x
\leq k\int\limits_{\{\Omega : |u|\geq h\}}|f|d\x.\nonumber
\end{gather*}
Applying \eqref{us3}, we obtain for any $k>0$ that
$$
\overline{a}\int\limits_{\{\Omega : h\leq |u|<k+h\}}\P(\x,\nabla u) d\x\leq
 k\int\limits_{\{\Omega : |u|\geq h\}}|f|d\x+\int\limits_{\{\Omega :  h\leq |u|<k+h\}}|\phi|d\x.
$$
Thus, noting that $f, \phi \in L_1(\Omega)$ and taking into account  \eqref{remark_1_1}, we derive the relation \eqref{lemma_3_}.
\end{proof}

\begin{remark}\label{remark_2}
	To avoid cumbersome arguments, instead of writing like ``from the sequence $\{v^j\}_{j \in \mathbb{N}}$ we can extract a subsequence (denoted by the same indexes) convergent a.e.\ in $\Omega$ as $j\to \infty$'' we will write simply ``the sequence $\{v^j\}_{j \in \mathbb{N}}$ converges along a subsequence a.e.\ in $\Omega$ as $j\to \infty$''.
	Accordingly, we will use the term ``converges weakly along a subsequence'', etc.
\end{remark}

\begin{lemma}\label{lemma_5}
Let $v^j,\, j\in \mathbb{N}$, and $v$ be functions from $L_{p(\cdot)}(Q)$ such that
$$
v^j\rightarrow v \quad\mbox{a.e.\ in}\quad Q,\quad j\rightarrow\infty;
$$
$$
|v^j|^{p(\x)}\leq h\in L_1(Q),\quad j\in \mathbb{N}.
$$
Then
$$
v^j\rightarrow v \quad\mbox{stronly in}\quad L_{p(\cdot)}(Q),\quad j\rightarrow\infty.
$$
\end{lemma}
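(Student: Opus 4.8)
The plan is to reduce the statement to the classical Lebesgue dominated convergence theorem applied at the level of the modular $\rho_{p(\cdot),Q}$, and then to pass from modular convergence to norm convergence via the two-sided estimate \eqref{st3}.

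First I would upgrade the hypotheses to a pointwise bound for the limit: since $v^j\to v$ a.e.\ in $Q$ and $|v^j|^{p(\x)}\le h$ a.e., continuity of $r\mapsto|r|^{p(\x)}$ for a.a.\ fixed $\x$ gives $|v|^{p(\x)}\le h$ a.e.\ in $Q$. Then, using the convexity inequality \eqref{sum}, one gets for a.a.\ $\x\in Q$ and every $j\in\mathbb{N}$
\begin{equation*}
|v^j-v|^{p(\x)}\le 2^{p^+-1}\bigl(|v^j|^{p(\x)}+|v|^{p(\x)}\bigr)\le 2^{p^+}h(\x)\in L_1(Q),
\end{equation*}
while at the same time $v^j\to v$ a.e.\ forces $|v^j-v|^{p(\x)}\to 0$ a.e.\ in $Q$.

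Next I would apply the dominated convergence theorem to the integrals $\rho_{p(\cdot),Q}(v^j-v)=\int_Q|v^j-v|^{p(\x)}\,d\x$, obtaining $\rho_{p(\cdot),Q}(v^j-v)\to 0$ as $j\to\infty$. Finally I invoke \eqref{st3}: from $\|v^j-v\|_{p(\cdot),Q}\le\max\{\rho^{1/p^+}_{p(\cdot),Q}(v^j-v),\rho^{1/p^-}_{p(\cdot),Q}(v^j-v)\}$ and the fact that both quantities on the right tend to $0$ as the modular does, we conclude $\|v^j-v\|_{p(\cdot),Q}\to 0$, that is, $v^j\to v$ strongly in $L_{p(\cdot)}(Q)$.

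There is essentially no hard step here; the only points requiring care are that the domination must be performed for the functions $|v^j-v|^{p(\x)}$ (the integrands of the modular) rather than for $v^j-v$ directly, and that the transition from modular to norm convergence is legitimate precisely because the estimates \eqref{st2}--\eqref{st3} are valid for an arbitrary, possibly unbounded, domain $Q$, so no boundedness of $Q$ is needed.
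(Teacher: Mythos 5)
Your argument is correct and is exactly the route the paper intends: the paper simply states that the lemma ``follows from the Lebesgue theorem,'' and your proof fills in the standard details (dominating $|v^j-v|^{p(\x)}$ by $2^{p^+}h$ via \eqref{sum}, applying dominated convergence to the modular, and passing to the norm via \eqref{st3}). Nothing further is needed.
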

The validity of Lemma \ref{lemma_5} follows from the Lebesgue theorem.

\begin{lemma}\label{lemma_7}
 Let the assumptions \eqref{us1}--\eqref{us3} be satisfied in $Q$, and for some fixed $k>0$ there hold
 \begin{equation}\label{lemma_7_1}
\v^j\rightharpoonup \v\quad  \mbox{in}\quad \L_{\overrightarrow{\p}(\cdot)}(Q),\quad j\rightarrow \infty,
\end{equation}
\begin{equation}\label{lemma_7_2}
T_k(u^j)\rightarrow
 T_k (u)\quad\mbox{a.e.\ in}\quad Q,\quad j\rightarrow\infty,
 \end{equation}
\begin{equation}
\lim_{j\to \infty}\int\limits_{Q}q^j(\x)d\x=0,\label{lemma_7_3}
\end{equation}
\begin{equation}
 q^j(\x)=(\a(\x,T_k(u^j),\v^j)-\a(\x,T_k(u^j),\v))\cdot(\v^j-\v).\label{lema_7_4}
 \end{equation}
Then, along a subsequence,
\begin{equation}\label{conf_1(4)}
\v^j\rightarrow \v\quad \mbox {a.e.\ in}\quad Q,\quad j\rightarrow \infty,
\end{equation}
\begin{equation}\label{lemma_7_}\v^j\rightarrow \v\quad \mbox{strongly in}\quad  \L_{\overrightarrow{\p}(\cdot)}(Q),\quad j\rightarrow \infty.
\end{equation}
\end{lemma}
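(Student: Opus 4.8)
The plan is to follow the classical Boccardo--Murat monotonicity trick, adapted to the anisotropic variable-exponent setting. First I would note that, by \eqref{us2}, $q^j(\x)\ge 0$ for a.a.\ $\x\in Q$, so that \eqref{lemma_7_3} says precisely that $q^j\to 0$ strongly in $L_1(Q)$; passing to a subsequence (cf.\ Remark \ref{remark_2}) I may assume in addition $q^j\to 0$ a.e.\ in $Q$. The decisive step is a pointwise estimate
\begin{equation*}
\P(\x,\v^j)\le \tfrac{2}{\overline a}\,q^j(\x)+g(\x),\qquad \text{a.a.\ }\x\in Q,\quad j\in\NN,
\end{equation*}
with one fixed nonnegative $g\in L_1(Q)$. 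To obtain it I would write $q^j$ (see \eqref{lema_7_4}) as the sum of $\a(\x,T_k(u^j),\v^j)\cdot\v^j$ and three cross terms, bound the first from below by the coercivity \eqref{us3}, and estimate each cross term by Young's inequality \eqref{ung} with a small parameter, using \eqref{us1'} together with $|T_k(u^j)|\le k$ and the monotonicity of the $\widehat A_i$ to replace $\widehat A_i(|T_k(u^j)|)$ by the constant $\widehat A_i(k)$. Choosing the parameter small enough to absorb the emerging multiples of $\P(\x,\v^j)$ into $\tfrac{\overline a}{2}\,\P(\x,\v^j)$ on the left leaves a remainder built from $\phi$, the $\Psi_i$ and $\P(\x,\v)$, all in $L_1(Q)$ since $\v\in\L_{\overrightarrow{\p}(\cdot)}(Q)$.

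Next I would carry out the pointwise extraction. Fix $\x$ in the full-measure set where \eqref{us1'}--\eqref{us3} hold, $q^j(\x)\to 0$, $T_k(u^j(\x))\to T_k(u(\x))$ (by \eqref{lemma_7_2}), $g(\x)<\infty$, and the displayed bound is valid; then $\{\v^j(\x)\}_{j\in\NN}$ is bounded in $\RR^n$. Given an arbitrary subsequence, I extract a further one with $\v^j(\x)\to\zeta$; continuity of $\a(\x,\cdot,\cdot)$ and $T_k(u^j(\x))\to T_k(u(\x))$ allow passing to the limit in \eqref{lema_7_4}, which gives $(\a(\x,T_k(u(\x)),\zeta)-\a(\x,T_k(u(\x)),\v(\x)))\cdot(\zeta-\v(\x))=0$, whence $\zeta=\v(\x)$ by the strict monotonicity \eqref{us2}. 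Since the limit does not depend on the chosen subsequence, $\v^j(\x)\to\v(\x)$, which is \eqref{conf_1(4)}.

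Finally, for \eqref{lemma_7_} I would turn a.e.\ convergence into norm convergence by domination, invoking Lemma \ref{lemma_10} twice. Put $g^j=\tfrac{2}{\overline a}q^j+g+\P(\x,\v)$; then $0\le\P(\x,\v^j)\le g^j$, $g^j\to g+\P(\x,\v)$ in $L_1(Q)$, $0\le\P(\x,\v)\le g+\P(\x,\v)$, and $\P(\x,\v^j)\to\P(\x,\v)$ a.e.\ (by \eqref{conf_1(4)} and continuity of $\P(\x,\cdot)$), so Lemma \ref{lemma_10} yields $\P(\x,\v^j)\to\P(\x,\v)$ in $L_1(Q)$. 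By \eqref{sum} one has $\P(\x,\v^j-\v)\le C\bigl(\P(\x,\v^j)+\P(\x,\v)\bigr)$, whose right-hand side now converges in $L_1(Q)$; applying Lemma \ref{lemma_10} once more to $\P(\x,\v^j-\v)\to 0$ a.e.\ gives $\int_Q\P(\x,\v^j-\v)\,d\x\to 0$, hence $\rho_{p_i(\cdot),Q}(v_i^j-v_i)\to 0$ and, by \eqref{st3}, $\|v_i^j-v_i\|_{p_i(\cdot),Q}\to 0$ for every $i$; summation over $i$ gives \eqref{lemma_7_}.

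I expect the only genuine obstacle to be the pointwise lower bound for $q^j$: everything hinges on arranging the Young-inequality book-keeping so that the coefficient of $\P(\x,\v^j)$ stays strictly positive while the leftover terms remain in $L_1(Q)$ uniformly in $j$, which is possible precisely because $|T_k(u^j)|\le k$ keeps the coefficients $\widehat A_i(|T_k(u^j)|)$ bounded by $\widehat A_i(k)$. Once that estimate is available, the rest is a routine blend of Bolzano--Weierstrass extraction, strict monotonicity, and dominated convergence through Lemma \ref{lemma_10}.
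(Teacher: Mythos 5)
Your proof is correct, but it follows a genuinely different route than the paper, and it is worth recording how the two differ. The paper does not prove the a.e.\ convergence \eqref{conf_1(4)} at all: it refers to \cite[Assertion 2]{KKM} and \cite[Lemma 3.3]{LL}, whereas you reconstruct that step yourself (Bolzano--Weierstrass at a.e.\ fixed $\x$ plus the strict monotonicity \eqref{us2}), which is essentially the content of those references; your pointwise majorization $\P(\x,\v^j)\le \tfrac{2}{\overline a}q^j+g$, obtained from \eqref{us3}, \eqref{us1'} and a parametric form of Young's inequality (legitimate here because the exponents $p_i(\cdot)$ are bounded away from $1$ and $\infty$, so the constant in the $\varepsilon$-Young inequality is uniform in $\x$), is exactly what makes that pointwise argument and the boundedness of $\{\v^j(\x)\}$ work. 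For the strong convergence \eqref{lemma_7_} the paper argues differently: it first upgrades \eqref{conf_1(4)} to the weak convergence \eqref{conf_1(6)} of $\a(\x,T_k(u^j),\v^j)$ in $\L_{\overrightarrow{\p}'(\cdot)}(Q)$ (Lemma \ref{lemma_4}) and the strong convergence \eqref{conf_1(7)} of $\a(\x,T_k(u^j),\v)$ (Lemma \ref{lemma_5}), deduces from \eqref{lemma_7_1}, \eqref{lemma_7_3} the convergence of the energies $\int_Q y^j\,d\x\to\int_Q y\,d\x$, and then applies Fatou's lemma to the nonnegative functions $y^j+y+2\phi-\overline a\,2^{1-p_+^+}\P(\x,\v^j-\v)$ to force $\int_Q\P(\x,\v^j-\v)\,d\x\to0$. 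You instead exploit the same $L_1$-majorant $\tfrac{2}{\overline a}q^j+g$ (with $q^j\to0$ in $L_1(Q)$ because $q^j\ge0$ and \eqref{lemma_7_3} holds) and apply the generalized dominated convergence theorem, Lemma \ref{lemma_10}, twice; this bypasses the weak/strong flux convergences \eqref{conf_1(6)}, \eqref{conf_1(7)} entirely and keeps the whole proof at the level of pointwise estimates, at the modest price of having to set up the $\varepsilon$-Young bookkeeping once. Both arguments are complete and yield modular convergence $\int_Q\P(\x,\v^j-\v)\,d\x\to0$, which passes to norm convergence via \eqref{st3} exactly as you say; note also that the paper's own Fatou step implicitly relies on the same coercivity-plus-convexity inequality you use, so your version can be seen as a self-contained, more elementary reorganization of the same ingredients.
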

\begin{proof}
 The convergence \eqref{conf_1(4)} is established analogously as in the proof of \cite[Assertion 2]{KKM}.
 Apparently, the first statement of this kind is Lemma 3.3 from the work \cite{LL}.

From \eqref{lemma_7_2}, \eqref{conf_1(4)} and the continuity of
$\a(\x,s_0,\s)$ with respect to ${\bf s}=(s_0,\s)$ it follows that
 \begin{equation*}
 \a(\x,T_k(u^j),\v^j)\rightarrow \a(\x,T_k(u),\v)\quad  \mbox{a.e.\ in} \quad Q,\quad j\rightarrow\infty.
 \end{equation*}
From the convergence \eqref{lemma_7_1} and in view of \eqref{st2} we have the estimate
\begin{equation}\label{conf_1(2)}
\|\P(\x,\v^j)\|_{1,Q}\leq C_3,\quad j\in \mathbb{N}.
\end{equation}
From \eqref{conf_1(2)} and \eqref{us1'} we obtain the boundedness of $\a(\x,T_k(u^j),\v^j)$ in $\L_{\overrightarrow{\p}'(\cdot)}(Q)$. Using Lemma \ref{lemma_4}, we get the weak convergence
\begin{gather}
\a(\x,T_k(u^j),\v^j)\rightharpoonup \a(\x,T_k(u),\v)\quad  \mbox{in} \quad \L_{\overrightarrow {\p}'(\cdot)}(Q),\quad j\rightarrow\infty.\label{conf_1(6)}
\end{gather}

 From \eqref{lemma_7_2} we get
$$
\a(\x,T_k(u^j),\v)\rightarrow \a(\x,T_k(u), \v)\quad  \mbox{a.e.\ in}\quad  Q,\quad j\rightarrow\infty,
$$
and from \eqref{us1'} we obtain the estimates
$$
\P'(\x,\a(\x,T_k(u^j),\v))\leq
\widehat{A}_+(k)\Psi^n(\x)+\widehat{A}^n(k)\P(\x,\v))\in L_1(Q),\quad j\in \mathbb{N},
$$
where $\Psi^n(\x)=\sum\limits_{i=1}^n\Psi_i(\x),\;\widehat{A}^n=\sum\limits_{i=1}^n\widehat{A}_i,\;\widehat{A}_+=
\max\limits_{i=\overline{1,n}}\widehat{A}_i$.
Thus, by Lemma \ref{lemma_5} we have the convergence
\begin{equation}\label{conf_1(7)}
\a(\x,T_k(u^j),\v)\rightarrow \a(\x,T_k(u),\v)
\,\text{  strongly in  }\, \L_{\overrightarrow{\p}'(\cdot)}(Q),\quad j\rightarrow\infty.
\end{equation}

Let us denote $y^j=\a(\x,T_k(u^j),\v^j)\cdot \v^j,\;y=\a(\x,T_k(u),\v)\cdot \v$.
Using \eqref{lemma_7_1}, \eqref{lemma_7_3}, \eqref{conf_1(6)}, \eqref{conf_1(7)}, we establish the convergence
\begin{equation}\label{conf_1(8)}
\int\limits_{Q}y^jd\x\rightarrow \int\limits_{Q}y d\x,\quad j\rightarrow\infty.
\end{equation}
Using \eqref{lemma_7_2}, \eqref{conf_1(4)}, we deduce the convergence
\begin{equation}\label{conf_1(9)}
y^j\rightarrow y  \quad  \mbox{a.e.\ in} \quad Q,\quad j\rightarrow\infty.
\end{equation}

Applying Fatou's Lemma, \eqref{us3}, \eqref{conf_1(4)} and \eqref{conf_1(9)}, we get the inequality
$$
\int\limits_{Q}2(y+\phi) d\x\leq \lim_{j\to\infty}\inf \int\limits_{Q} (y^j+y+2\phi-\overline{a}2^{1-p_+^+}\P(\x, \v^j-\v)d\x,
$$
where $p_+^+=\max\limits_{i=\overline{1,n}}p^+_i$.
Taking into account \eqref{conf_1(8)}, we obtain
$$
0\leq -\lim_{j\to\infty}\sup\int\limits_{Q} \P(\x,\v^j-\v)d\x.
$$
Thus, we have
$$
0\leq \lim_{j\to\infty}\inf\int\limits_{Q} \P(\x,\v^j-\v)d\x\leq\lim_{j\to\infty}\sup\int\limits_Q \P(\x,\v^j-\v)d\x\leq 0,
$$
and hence
$$
\int\limits_{Q} \P(\x,\v^j-\v)d\x\rightarrow 0, \quad j\rightarrow\infty.
$$
Consequently,
\begin{equation}\label{conf_1(10)}
\v^j\rightarrow \v \quad \text{in}\quad \L_{\overrightarrow{\p}(\cdot)}(Q), \quad j\rightarrow\infty,
\end{equation}
i.e., the convergence \eqref{lemma_7_} is proved.
\end{proof}

\begin{lemma}\label{lemma_8}
Let functions $v^j,\, j\in \mathbb{N}$, and $v\in L_{\infty}(Q)$
be such that $\{v^j\}_{j\in \mathbb{N}}$ is bounded in $L_{\infty}(Q)$ and
$$
v^j\rightarrow v \quad\mbox{a.e.\ in}\quad Q,\quad j\rightarrow\infty.
$$
Then
$$
v^j\stackrel{*}\rightharpoonup v \quad\mbox{weakly in}\quad L_{\infty}(Q),\quad j\rightarrow\infty.
$$

If, moreover, $h^j,\, j\in \mathbb{N}$, and $h$ are functions from $L_{p(\cdot)}(Q)$ such that
$$
h^j \rightarrow h \quad\mbox{strongly in}\quad L_{p(\cdot)}(Q),\quad j\rightarrow\infty,
$$
then
$$
v^jh^j\rightarrow v h \quad\mbox{strongly in}\quad L_{p(\cdot)}(Q),\quad j\rightarrow\infty.
$$
\end{lemma}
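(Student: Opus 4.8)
The plan is to handle the two assertions separately, in each case reducing everything to a dominated‑convergence argument. Put $M=\sup_{j\in\mathbb{N}}\|v^j\|_{\infty,Q}$, which is finite by hypothesis; we may assume $M>0$.

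For the first assertion, I would fix an arbitrary $g\in L_1(Q)$. Since $v^j\to v$ a.e.\ in $Q$, we have $v^j(\x)g(\x)\to v(\x)g(\x)$ for a.a.\ $\x\in Q$, while $|v^j(\x)g(\x)|\le M|g(\x)|$ with $M|g|\in L_1(Q)$ independently of $j$. By the Lebesgue dominated convergence theorem, $\int\limits_Q v^jg\,d\x\to\int\limits_Q vg\,d\x$ as $j\to\infty$. As $g\in L_1(Q)$ was arbitrary and $L_\infty(Q)=(L_1(Q))^*$ (the domain $Q$ being $\sigma$‑finite), this is exactly the weak‑$*$ convergence $v^j\stackrel{*}\rightharpoonup v$ in $L_\infty(Q)$, and it holds for the full sequence. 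Note that since $Q$ need not be bounded, Egorov's theorem is not directly applicable here, but it is not needed.

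For the second assertion I would write $v^jh^j-vh=v^j(h^j-h)+(v^j-v)h$ and estimate the two terms. For the first one, the pointwise bound $|v^j(h^j-h)|\le M|h^j-h|$ together with the homogeneity of the Luxemburg norm gives $\|v^j(h^j-h)\|_{p(\cdot),Q}\le M\|h^j-h\|_{p(\cdot),Q}$, which tends to $0$ by the strong convergence $h^j\to h$ in $L_{p(\cdot)}(Q)$. For the second term I would invoke Lemma \ref{lemma_5}: the functions $(v^j-v)h$ lie in $L_{p(\cdot)}(Q)$, converge to $0$ a.e.\ in $Q$ (because $v^j\to v$ a.e.), and satisfy $|(v^j-v)h|^{p(\x)}\le\max\{1,(2M)^{p^+}\}|h(\x)|^{p(\x)}$ with the majorant in $L_1(Q)$ since $h\in L_{p(\cdot)}(Q)$; hence $(v^j-v)h\to0$ strongly in $L_{p(\cdot)}(Q)$. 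Adding the two estimates yields $v^jh^j\to vh$ strongly in $L_{p(\cdot)}(Q)$.

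I expect the only delicate point to be the term $(v^j-v)h$: one cannot bound $\|(v^j-v)h\|_{p(\cdot),Q}$ by $\|v^j-v\|_{\infty,Q}\|h\|_{p(\cdot),Q}$, because $\|v^j-v\|_{\infty,Q}$ need not tend to zero (only weak‑$*$ convergence is available), so a genuine equi‑integrability / dominated‑convergence argument — conveniently packaged in Lemma \ref{lemma_5} — is indispensable here.
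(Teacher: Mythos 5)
Your proof is correct and follows essentially the same route as the paper, which simply remarks that Lemma \ref{lemma_8} ``follows from the Lebesgue theorem'': your dominated-convergence argument for the weak-$*$ part and the splitting $v^jh^j-vh=v^j(h^j-h)+(v^j-v)h$ with Lemma \ref{lemma_5} (itself a consequence of the Lebesgue theorem) for the product part is just the natural fleshing-out of that remark.
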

The proof of Lemma \ref{lemma_8} follows from the Lebesgue theorem.
Below, we will use the Vitali theorem in the following form (see \cite[
Chapter III, \S 6, Theorem 15]{Danf_Shvarc}).
\begin{lemma}\label{lemma_9}
Let $v^j,\, j\in \mathbb{N}$, and $v$ be measurable functions in a bounded domain $Q$ such that
$$
v^j\rightarrow v \quad\mbox{a.e.\ in}\quad Q,\quad j\rightarrow\infty,
$$
and integrals
$$
\int\limits_Q|v^j(\x)|d\x,\quad j\in\mathbb{N}
$$
are uniformly absolutely continuous.
Then
$$
v^j\rightarrow v \quad\text{strongly in}\quad L_1(Q),\quad j\rightarrow\infty.
$$
\end{lemma}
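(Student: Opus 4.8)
The plan is to establish this as the classical Vitali convergence theorem, combining Egorov's theorem with the observation that, on a set of finite measure, uniform absolute continuity of a family of integrals forces those integrals to be uniformly bounded.

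First I would show that $\{\,\int_Q|v^j|\,d\x\,\}_{j\in\NN}$ is bounded. Fix $\varepsilon>0$ and let $\delta>0$ be the number furnished by the uniform absolute continuity, so that ${\rm meas}\,E<\delta$ implies $\int_E|v^j|\,d\x<\varepsilon$ for every $j$. Since $Q$ is bounded, ${\rm meas}\,Q<\infty$, so $Q$ splits into finitely many measurable pieces, each of measure $<\delta$; adding up the corresponding integrals gives a bound $\int_Q|v^j|\,d\x\le C\varepsilon$ uniform in $j$. Applying Fatou's lemma to $|v^j|\to|v|$ a.e.\ then yields $v\in L_1(Q)$, and applying it on an arbitrary $E$ with ${\rm meas}\,E<\delta$ gives $\int_E|v|\,d\x\le\varepsilon$ as well; thus $|v|$ inherits the same modulus of absolute continuity.

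Next I would invoke Egorov's theorem: since ${\rm meas}\,Q<\infty$ and $v^j\to v$ a.e.\ in $Q$, for each $\eta>0$ there is a measurable $Q_\eta\subset Q$ with ${\rm meas}(Q\setminus Q_\eta)<\eta$ on which $v^j\to v$ uniformly. Given $\varepsilon>0$, choose $\eta<\delta$ with $\delta$ corresponding to $\varepsilon$ as above, and split
\begin{equation*}
\int_Q|v^j-v|\,d\x=\int_{Q_\eta}|v^j-v|\,d\x+\int_{Q\setminus Q_\eta}|v^j-v|\,d\x .
\end{equation*}
On $Q_\eta$ the uniform convergence together with ${\rm meas}\,Q<\infty$ gives $\int_{Q_\eta}|v^j-v|\,d\x\le{\rm meas}(Q)\,\sup_{Q_\eta}|v^j-v|\to0$ as $j\to\infty$; on $Q\setminus Q_\eta$, since ${\rm meas}(Q\setminus Q_\eta)<\delta$, we get $\int_{Q\setminus Q_\eta}|v^j-v|\,d\x\le\int_{Q\setminus Q_\eta}|v^j|\,d\x+\int_{Q\setminus Q_\eta}|v|\,d\x<2\varepsilon$ for all $j$. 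Hence $\limsup_{j\to\infty}\int_Q|v^j-v|\,d\x\le2\varepsilon$, and letting $\varepsilon\to0$ yields $v^j\to v$ strongly in $L_1(Q)$.

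The only step that requires any attention is the first one: turning uniform absolute continuity into a uniform bound on $\int_Q|v^j|\,d\x$ (which is exactly where the boundedness of $Q$, i.e.\ ${\rm meas}\,Q<\infty$, enters) and transferring the absolute-continuity estimate to the limit function $v$ via Fatou. Once that is in place, the two-set decomposition based on Egorov's theorem is routine.
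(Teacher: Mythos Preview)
Your proof is correct and is the standard argument for the Vitali convergence theorem. The paper itself does not supply a proof of this lemma: it merely states the result and refers to Dunford--Schwartz, \textit{Linear Operators}, Chapter~III, \S6, Theorem~15, identifying it as the Vitali theorem. So your write-up actually goes beyond what the paper provides; the approach you give (uniform $L_1$-bound from uniform absolute continuity on a set of finite measure, transfer of the modulus to the limit via Fatou, then the Egorov splitting) is precisely the classical proof one finds in the cited reference.
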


\section{Existence of a solution}\label{s5}

\textit{Proof of Theorem \ref{t1}.}

\subsection{}\label{s5.1}
Let
 $$
 f^m(\x)=\frac{f(\x)}{1+|f(\x)|/m}\chi_{\Omega(m)},\quad  \Omega(m)=\{\x\in \Omega:|\x|<m\}.
 $$
 It is not hard to show that
 \begin{equation}\label{theorem_2(1-0)}
f^m\rightarrow f \quad \mbox {in}  \quad L_1(\Omega),\quad m\rightarrow\infty,
\end{equation}
 and, moreover,
\begin{equation}\label{theorem_2(1-1)}
|f^m(\x)|\leq |f(\x)|,\quad |f^m(\x)|\leq m\chi_{\Omega(m)},\quad \x\in \Omega,\quad m\in\mathbb{N}.
\end{equation}

Consider the equations
\begin{equation}\label{theorem_2(1-2)}
 {\rm div}\;\a^m(\x,u,\nabla u)=a_0^m(\x,u,\nabla u),\quad \x\in \Omega,\quad m\in\mathbb{N},
\end{equation}
where
$$
\a^m(\x,s_0,\s)=\a(\x,T_m(s_0),\s),\quad a_0^m(\x,s_0,\s)=b^m(\x,s_0,\s)+|s_0|^{p_0(\x)-2}s_0+f^m(\x),
$$
and
$$
\a^m(\x,s_0,\s)=(a^m_1(\x,s_0,\s),\dots,a^m_n(\x,s_0,\s)),\quad b^m(\x,s_0,\s)=\frac{b(\x,s_0,\s)}{1+|b(\x,s_0,\s)|/m}\chi_{\Omega(m)}.
$$
Evidently,
\begin{equation}
|b^m(\x,s_0,\s)|\leq |b(\x,s_0,\s)|,\quad |b^m(\x,s_0,\s)|\leq m\chi_{\Omega(m)},\quad \x\in \Omega,\quad (s_0,\s)\in \mathbb{R}^{n+1}.\label{theorem_2(1-3)}
\end{equation}
Moreover, applying \eqref{us5}, \eqref{us5_}, we get
\begin{equation}\label{theorem_2(1-4)}
b^m(\x,\s,s_0)s_0\geq 0,\quad \x\in \Omega,\quad  (s_0,\s)\in \mathbb{R}^{n+1}.
\end{equation}
$$
b^m(\x,0,\s)=0,\quad \x\in \Omega,\quad \s\in \mathbb{R}^n.
$$

Let us define operators ${\bf A}^m:\mathring{W}^{1}_{\overrightarrow{\bf p}(\cdot)}(\Omega)\rightarrow{W}^{-1}_{\overrightarrow{\bf p}'(\cdot)}(\Omega),
\;{\bf A}^m=\A^m+{A}^m_0$ for any $u,v\in \mathring{W}^{1}_{\overrightarrow{\bf p}(\cdot)}(\Omega)$ by
\begin{gather}\label{theorem_2(1-6)}
<{\A}^m(u),v>=\langle \a^m(\x,u, \nabla u)\cdot\nabla
v\rangle,\quad
<{A}^m_0(u),v>=\langle a^m_0(\x,u,\nabla u)v\rangle.
\end{gather}

A generalized solution of the problem \eqref{theorem_2(1-2)}, \eqref{gu} is a function $u\in\mathring{W}^{1}_{\overrightarrow{\bf p}(\cdot)} ({\Omega})$ which satisfies the integral identity
\begin{equation}\label{theorem_2(1-5-6)}
<{\bf A}^m(u),v>= 0
\end{equation}
for any $v\in \mathring{W}^{1}_{\overrightarrow{\bf p}(\cdot)}(\Omega)$.
By Theorem \ref{t2} (see Appendix below) for any $m\in\mathbb{N}$ there exists a generalized solution $u^m\in\mathring{W}^{1}_{\overrightarrow{\bf p}(\cdot)}(\Omega)$ of the problem \eqref{theorem_2(1-2)}, \eqref{gu}.
Thus, for any function $v\in \mathring{W}^{1}_{\overrightarrow{\bf p}(\cdot)}(\Omega)$ there holds
\begin{equation}\label{theorem_2(1-5)}
\langle (f^m(\x)+b^m(\x,u^m,\nabla u^m)+|u^m|^{p_0(\x)-2}u^m)v\rangle+\langle \a(\x,T_m (u^m), \nabla u^m)\cdot\nabla
v\rangle= 0.
\end{equation}

\subsection{}\label{s5.2}
In this step, we establish a priori estimates for the sequence $\{u^m\}_{m\in \mathbb{N}}$.

Let $v=T_{k,h} (u^m)=T_k(u^m-T_h(u^m)),\;h,k>0,$ in \eqref{theorem_2(1-5)}. Taking into account \eqref{theorem_2(1-4)}, we get
\begin{gather}
\int\limits_{\{\Omega : h\leq|u^m|<k+h\}} \a(\x,T_m(u^m),\nabla u^m)\cdot\nabla u^m d\x+\nonumber\\
+k \int\limits_{\{\Omega :|u^m|\geq k+h\}}\left(|b^m(\x,u^m,\nabla u^m)|+|u^m|^{p_0(\x)-1}\right) d\x+\label{theorem_2(2-1)}\\
+\int\limits_{\{\Omega : h\leq|u^m|< k+h\}}\left(b^m(\x,u^m,\nabla u^m)+|u^m|^{p_0(\x)-2}u^m\right)(u^m-h\,{\rm sign}\,u^m) d\x\leq\nonumber \\\leq k\int\limits_{\{\Omega : |u^m|\geq h\}}|f^m|d\x.\nonumber
\end{gather}
In view of $(\ref{theorem_2(1-4)})$, for $h\leq |u^m|$ the following inequality is satisfied:
$$
(b^m(\x,u^m,\nabla u^m)+|u^m|^{p_0(\x)-2}u^m)(u^m-h\,{\rm sign}\,u^m)\ge 0.
$$
Applying now \eqref{theorem_2(1-1)}, we deduce from \eqref{theorem_2(2-1)} that
\begin{gather}
\int\limits_{\{\Omega : h\leq|u^m|<k+h\}} \left(\a(\x,T_m(u^m),\nabla u^m)\cdot\nabla u^m+\phi(\x)\right)  d\x+\nonumber\\
+k\int\limits_{\{\Omega : |u^m|\geq k+h\}}\left(|b^m(\x,u^m,\nabla u^m)|+|u^m|^{p_0(\x)-1}\right) d\x\leq\label{theorem_2(2-3)}\\
\leq \int\limits_{\{\Omega : |u^m|\geq h\}}(k|f|+\phi)d\x\leq k\|f\|_1+\|\phi\|_1,\quad m\in\mathbb{N}.\nonumber
\end{gather}

Now we take $T_k (u^m),\;k>0,$ as a test function in \eqref{theorem_2(1-5)}.
Due to \eqref{theorem_2(1-1)}, \eqref{theorem_2(1-4)}, we get
\begin{gather*}\int\limits_{\{\Omega : |u^m|<k\}} \a(\x,T_m(u^m),\nabla u^m)\cdot\nabla u^m d\x+\\ +k\int\limits_{\{\Omega : |u^m|\geq k\}}\left(|b^m(\x,u^m,\nabla u^m)|
+|u^m|^{p_0(\x)-1}\right) d\x+\\
+\int\limits_{\{\Omega : |u^m|< k\}}|u^m|^{p_0(\x)} d\x\leq  k\|f\|_1.
\end{gather*}
Therefore, using the inequality \eqref{us3}, we obtain
\begin{gather}\int\limits_{\{\Omega : |u^m|<k\}}\P(\x,\nabla u^m) d\x+ k\int\limits_{\{\Omega : |u^m|\geq k\}}\left(|b^m(\x,u^m,\nabla u^m)|+|u^m|^{p_0(\x)-1}\right)  d\x+\nonumber\\
+\int\limits_{\{\Omega : |u^m|< k\}}|u^m|^{p_0(\x)} d\x\leq kC_1+C_2 ,\quad m\in\mathbb{N}.\label{theorem_2(2-4)}
\end{gather}

From the estimate \eqref{theorem_2(2-4)} we have
\begin{gather}
\int\limits_{\Omega}|T_k(u^m)|^{p_0(\x)}d\x=\int\limits_{\{\Omega : |u^m|<k\}}|u^m|^{p_0(\x)}d\x+\int\limits_{\{\Omega : |u^m|\geq k\}}k^{p_0(\x)}d\x\leq\nonumber\\ \leq\int\limits_{\{\Omega : |u^m|<k\}}|u^m|^{p_0(\x)}d\x+k\int\limits_{\{\Omega : |u^m|\geq k\}}|u^m|^{p_0(\x)-1}d\x\leq kC_1+C_2,\quad m\in\mathbb{N}.
\label{theorem_2(2-5)}
\end{gather}
Moreover, from \eqref{theorem_2(2-4)} we get the following estimate:
\begin{equation}\label{theorem_2(2-6)}\int\limits_{\{\Omega : |u^m|<k\}}\P(\x,\nabla u^m) d\x=\int\limits_{\Omega}\P(\x,\nabla T_k (u^m))d\x\leq C_1k+C_2, \quad m\in\mathbb{N}.
\end{equation}

Combining \eqref{theorem_2(1-3)}, \eqref{us4}, \eqref{theorem_2(2-6)}, we derive
\begin{gather}
\int\limits_{\{\Omega : |u^m|<k\}}|b^m(\x,u^m,\nabla u^m)|d\x\leq  \widehat{b}(k)\int\limits_{\Omega} \left(\P(\x,\nabla T_k
(u^m))+\Phi_0(\x)\right)d\x\leq\nonumber\\
\leq \widehat{b}(k)(C_1k+C_2+\|\Phi_0\|_1)=C_3(k),\quad m\in\mathbb{N}.\label{theorem_2(2-7)}
\end{gather}
From \eqref{theorem_2(2-4)}, \eqref{theorem_2(2-7)} we get the estimate
\begin{equation}\label{theorem_2(2-8)}
\|b^m(\x,u^m,\nabla u^m)\|_{1} \leq C_4(k),\quad m\in\mathbb{N}.
\end{equation}

\subsection{}\label{s5.3}
Due to \eqref{theorem_2(2-4)}, we obtain from Lemma \ref{lemma_2} that
\begin{equation}\label{theorem_2(3-1)}
{\rm meas}\,\{\Omega : |u^m|\geq\rho\}\rightarrow 0 \quad \mbox {uniformly with respect to}\; m\in\mathbb{N}, \quad \rho\rightarrow \infty,
\end{equation}
which is equivalent to
\begin{equation}\label{theorem_2(3-1)'}
\tag{$\ref*{theorem_2(3-1)}^\prime$}
\sup_{m\in\mathbb{N}}{\rm meas}\,\{\Omega : |u^m|\geq\rho\}=g(\rho)\rightarrow 0,  \quad \rho\rightarrow \infty.
\end{equation}

Let us establish the following convergence along a subsequence:
\begin{equation}\label{theorem_2(3-2)}
u^m\rightarrow u  \quad \mbox {a.e.\ in} \quad \Omega,\quad m\rightarrow \infty.
\end{equation}

Let $\eta_R(r)=\min(1,\max(0,R+1-r))$.
Applying \eqref{sum}, we deduce from \eqref{theorem_2(2-6)} for $R, \rho>0$ that
\begin{gather*}
\int\limits_{\Omega}\P(\x,\nabla (\eta_{R}(|\x|)T_\rho (u^m)))d\x\leq\\\leq C_5 \int\limits_{\{\Omega :|u^m|<\rho\}}\P(\x,\nabla u^m ) d\x
+C_5\int\limits_{\Omega}\P(\x,T_\rho (u^m)\nabla \eta_{R}(|\x|))d\x\leq C_6(\rho,R).
\end{gather*}
Hence, for any fixed $\rho, R>0$ we have the boundedness in $\mathring{H}_{\overrightarrow{\p}(\cdot)}^1(\Omega(R+1))$ of the set $\{\eta_{R} T_\rho (u^m)\}_{m\in\mathbb{N}}$.
By Lemma \ref{lemma_0} the space $\mathring{H}_{\overrightarrow{\bf p}(\cdot)}^{1}(\Omega(R+1))$ is compactly embedded in $L_{p_-(\cdot)}({\Omega}(R+1))$. Thus, for any fixed $\rho, R>0$ we obtain the convergence $\eta_{R}T_\rho (u^m) \rightarrow v_{\rho}$ in $L_{p_-(\cdot)}(\Omega(R+1))$ as $m\rightarrow\infty$.
This implies the convergence $T_\rho (u^m) \rightarrow v_{\rho}$ in $L_{p_-(\cdot)}(\Omega(R))$, and the convergence along a subsequence $T_\rho(u^m)\rightarrow v_{\rho}$ a.e.\ in $\Omega(R)$.

By Egorov's theorem we can chose a set $E_{\rho}\subset\Omega(R)$ such that ${\rm meas}\,E_{\rho}<1/\rho$ and   \begin{equation}\label{theorem_2(3-3)}
T_\rho (u^m) \rightarrow v_{\rho}\; \mbox{uniformly on }\;
  \Omega(R)\setminus E_{\rho},\quad m\rightarrow\infty.
\end{equation}
Consider the set
$$
\Omega^{\rho}(R)=\{\x\in\Omega(R)\setminus E_{\rho}: |v_{\rho}(\x)|\ge\rho-1\}.
$$
In view of the uniform convergence \eqref{theorem_2(3-3)}, there exists $m_0\in\mathbb{N}$ such that for any $m\geq m_0$
the inequality $|T_\rho (u^m(\x))|\geq\rho-2$ is satisfied on $\Omega^{\rho}(R)$, which implies $|u^m(\x)|\geq \rho-2.$

Evidently,
\begin{equation}\label{theorem_2(3--4)}
{\rm meas}\,\Omega^{\rho}(R)\leq\sup_m{\rm meas}\{\Omega:|u^m(\x)|\geq \rho-2\}=g(\rho-2).
\end{equation}
Due to \eqref{theorem_2(3-1)'}, $g(\rho)\to0$ as $\rho\to\infty.$

Consider the set
$$
\Omega_{\rho}(R)=\{\x\in\Omega(R)\setminus E_{\rho}:
  |v_{\rho}(\x)|<\rho-1\}.
$$
According to \eqref{theorem_2(3--4)}, we have
\begin{equation}\label{theorem_2(3--5)}
{\rm meas}\,\Omega_{\rho}(R)>{\rm meas}\,\Omega(R)-1/\rho-g(\rho-2).
\end{equation}

The uniform convergence \eqref{theorem_2(3-3)} implies that $|T_\rho (u^m(\x))|<\rho$ on  $\Omega_{\rho}(R)$ for $m\geq m_0$. Therefore,
$$
u^m\rightarrow v_{\rho}
  \quad \mbox{uniformly on} \quad \Omega_{\rho}(R),\quad m\rightarrow\infty.
$$
Thus, in view of \eqref{theorem_2(3--5)}, using the diagonalisation argument with respect to $\rho\in \mathbb{N}$, it is not hard to obtain that $v_{\rho}$ does not depend on $\rho\; (v_{\rho}=u),$
and the convergence
$$
u^m\rightarrow u  \quad \mbox{a.e.\ in} \quad \Omega(R),\quad m\rightarrow\infty
$$
holds true. Then, by the diagonalisation argument with respect to $R\in \mathbb{N}$, we establish the convergence \eqref{theorem_2(3-2)}.

It follows from \eqref{theorem_2(3-2)} that for any $k>0$,
\begin{equation}\label{theorem_2(3-7)}
T_k(u^m)\rightarrow T_k(u)  \quad \mbox {a.e.\ in} \quad \Omega,\quad m\rightarrow \infty.
\end{equation}

Let us prove that
\begin{equation}\label{theorem_2(3-8)}
|u^m|^{p_0(\x)-2}u^m \to  |u|^{p_0(\x)-2}u \quad\mbox{in} \quad L_{1,{\rm loc}}(\overline{\Omega}),\quad m\rightarrow \infty.
\end{equation}

From the convergence \eqref{theorem_2(3-2)} we have
\begin{equation}\label{theorem_2(3-9)}
|u^m|^{p_0(\x)-2}u^m \to |u|^{p_0(\x)-2}u
\quad\mbox{a.e.\ in} \quad \Omega,  \quad m\to
\infty.
\end{equation}

From \eqref{theorem_2(2-3)} with $k=1$ and any $h>0$ we get
 \begin{gather*}\int\limits_{\{\Omega : h\leq |u^m|<1+h\}}\left(\a(\x,T_m(u^m),\nabla u^m)\cdot\nabla u^m+\phi(\x)\right) d\x+\\+\int\limits_{\{\Omega :|u^m|\geq h+1\}}\left(|b^m(\x,u^m,\nabla u^m)|+|u^m|^{p_0(\x)-1}\right)   d\x\leq \int\limits_{\{\Omega :|u^m|\geq h\}}(|f|+\phi)d\x,\quad m\in \mathbb{N}.\end{gather*}
Noting that $f,\phi\in L_1(\Omega)$ and the integral in the right-hand side of the last inequality is absolutely continuous, and recalling \eqref{theorem_2(3-1)}, we see that for any $\varepsilon>0$ there exists a sufficiently large $h(\varepsilon)>1$ such that
\begin{gather}\label{theorem_2(3-10)}
\int\limits_{\{\Omega : h-1\leq |u^m|<h\}}\left(\a(\x,T_m(u^m),\nabla u^m)\cdot\nabla u^m+\phi(\x)\right) d\x+\\+\int\limits_{\{\Omega :|u^m|\geq h\}}\left(|b^m(\x,u^m,\nabla u^m)|+|u^m|^{p_0(\x)-1}\right)   d\x <\frac{\varepsilon}{2},\quad m\in\mathbb{N}.\nonumber
\end{gather}

Let $Q$ be an arbitrary bounded subset of $\Omega$. Then for any measurable set $E\subset Q$ there holds
\begin{equation}\label{theorem_2(3-11)}
\int\limits_E |u^m|^{p_0(\x)-1}d\x\leq \int\limits_{ \{E:|u^m|<h\}}|u^m|^{p_0(\x)-1}d\x+\int\limits_{\{\Omega:|u^m|\geq h\}}|u^m|^{p_0(\x)-1}d\x.
\end{equation}
Evidently,
\begin{equation}\label{theorem_2(3-12)}
\int\limits_{\{E:|u^m|<h\}}|u^m|^{p_0(\x)-1}d\x\leq h^{p^+_0-1}\;{\rm meas}\;E<\frac{\varepsilon}{2},\quad m\in \mathbb{N},
\end{equation}
is satisfied for any $E$ such that ${\rm meas}\;E<\frac{\varepsilon}{2h^{p^+_0-1}}=\alpha(\varepsilon)$.

Combining \eqref{theorem_2(3-10)}--\eqref{theorem_2(3-12)}, we obtain
$$
\int\limits_E |u^m|^{p_0(\x)-1}d\x< \varepsilon\quad \forall\;E\quad \mbox{such that }\; {\rm meas}\;E<\alpha(\varepsilon),\quad m\in \mathbb{N}.
$$
This implies that the integrals $\int\limits_Q|u^m|^{p_0(\x)-1}d\x,\;m\in \mathbb{N},$ are uniformly absolutely continuous, and by Lemma \ref{lemma_9} there is the convergence
$$
|u^m|^{p_0(\x)-2}u^m \to  |u|^{p_0(\x)-2}u \quad\mbox{in} \quad L_{1}(Q),\quad m\rightarrow \infty.
$$
Since $Q\subset\Omega$ is arbitrary, the convergence \eqref{theorem_2(3-8)} is proved.

\subsection{}\label{s5.4}

Let us show that $T_k (u)\in \mathring {W}_{\overrightarrow{\bf p}(\cdot)}^{1}(\Omega)$ for any $k>0$.
Combining \eqref{theorem_2(2-5)}, \eqref{theorem_2(2-6)}, \eqref{st3}, for any fixed $k>0$ we get the estimate
\begin{equation}\label{theorem_2(6-0)}
\| T_k(u^m)\|_{\mathring
{W}_{\overrightarrow{\bf p}(\cdot)}^{1}(\Omega)}\leq C_7(k),\quad m\in\mathbb{N}.
\end{equation}
Since $\mathring
{W}_{\overrightarrow{\bf p}(\cdot)}^{1}(\Omega)$ is reflexive, we can extract a weakly convergent subsequence in $\mathring
{W}_{\overrightarrow{\bf p}(\cdot)}^{1}(\Omega)$ of the sequence $T_k (u^m)\rightharpoonup v_k,\; m\rightarrow \infty$,
where $v_k\in \mathring
{W}_{\overrightarrow{\bf p}(\cdot)}^{1}(\Omega)$.
The continuity of the natural map $\mathring
{W}_{\overrightarrow{\bf p}(\cdot)}^{1}(\Omega)\rightarrow {\bf L}_{\overrightarrow{\bf p}(\cdot)}(\Omega)$ yields the weak convergence
$$
 \nabla T_k (u^m)\rightharpoonup \nabla v_{k} \quad \mbox{in}\quad \L_{\overrightarrow{\p}(\cdot)}(\Omega),\quad
 T_k (u^m)\rightharpoonup  v_{k} \quad \mbox{in}\quad L_{p_0(\cdot)}(\Omega), \quad m\rightarrow \infty.
$$

Using the convergence \eqref{theorem_2(3-7)} and applying Lemma \ref{lemma_4}, we get the weak convergence
$$
 T_k (u^m)\rightharpoonup T_k(u) \quad \mbox{in}\quad L_{p_0(\cdot)}(\Omega),\quad m\rightarrow \infty.
$$
This implies that $v_k=T_k(u)\in \mathring
{W}_{\overrightarrow{\bf p}(\cdot)}^{1}(\Omega).$

Thus, we have the weak convergence
\begin{equation}\label{theorem_2(6-1)}
 \nabla T_k(u^m) \rightharpoonup \nabla T_k(u) \quad \mbox{in}\quad \L_{\overrightarrow{\p}(\cdot)}(\Omega),\quad m\rightarrow \infty.
\end{equation}

\subsection{}\label{s5.5}

In this step, we establish the strong convergence
\begin{equation}\label{theorem_2(4--0)}
\nabla T_k(u^m)\rightarrow \nabla T_k(u)\quad \mbox{in}\quad  \L_{\overrightarrow{\p}(\cdot),{\rm
loc}}(\overline{\Omega}),\quad m\rightarrow \infty.
\end{equation}

From \eqref{theorem_2(2-6)}, \eqref{us1'}, applying \eqref{st3}, for any $k>0$ we have the estimate
\begin{gather}
\|\a(\x,T_k(u^m),\nabla T_k(u^m))\|_{\overrightarrow{\p}(\cdot)}\leq C_8(k),
\quad m\in\mathbb{N}.\label{theorem_2(4--2)}
\end{gather}

Let us denote by $\varepsilon_i(m),\;m,i\in \mathbb{N},$ some functions which tend to $0$ as $m\rightarrow \infty$. Let $\varphi_k(\rho)=\rho\exp(\gamma^2\rho^2),$ where $\gamma=\frac{\widehat{b}(k)}{\overline{a}}.$

Let  $h-1>k>0,$
$$
z^m=T_k(u^m)-T_k(u),\quad  m\in \mathbb{N}.
$$
Evidently,
$$
\psi_k(\rho)=\varphi'_k(\rho)-\gamma|\varphi_k(\rho)|\geq 7/8,\quad \rho\in\mathbb{R}.
$$
This implies the inequalities
\begin{equation}\label{theorem_2(4--3)}
7/8\leq \psi_k(z^m)\leq \max_{[-2k,2k]}\psi_k(\rho)=C_9(k),\quad m\in\mathbb{N}.
\end{equation}

In view of \eqref{theorem_2(3-7)}, we get
\begin{equation}\label{theorem_2(4--4)}
\varphi_k(z^m)\rightarrow 0 \quad \mbox {a.e.\ in} \quad \Omega,\quad m\rightarrow \infty,
\end{equation}
and
\begin{equation}\label{theorem_2(4--5)}
|\varphi_k(z^m)|\leq \varphi_k(2k),\quad 1\leq\varphi_k'(z^m)\leq
\varphi_k'(2k),\quad m\in \mathbb{N}.
\end{equation}

Applying \eqref{theorem_2(4--4)}, \eqref{theorem_2(4--5)},
we use Lemma \ref{lemma_8} to get the convergence
\begin{equation}\label{theorem_2(4--6)}
|\varphi_k(z^m)| \stackrel{*}\rightharpoonup 0\quad\mbox{in}\quad
L_{\infty}(\Omega),\quad m\rightarrow \infty.
\end{equation}
By Lemma \ref{lemma_5}, for any $g\in L_{p_i(\cdot)}(Q),\;Q\subseteq \Omega,$ there is the convergence
\begin{equation}\label{theorem_2(4--8)}
g\varphi_k(z^m)\rightarrow 0\quad\mbox{in}\quad
L_{p_i(\cdot)}(Q),\quad i=1,\ldots,n,\quad m\rightarrow \infty.
 \end{equation}

Considering $\varphi_k(z^m)\eta_R(|\x|)\eta_{h-1}(|u^m|),\;R>0,$ as a test function in \eqref{theorem_2(1-5)}, we get
\begin{gather}
\int\limits_{\Omega}\a(\x,T_h(u^m),\nabla T_h(u^m))\cdot \nabla (\eta_R(|\x|) \varphi_k(z^m)\eta_{h-1}(|u^m|))d\x+\nonumber\\
+\int\limits_{\Omega}b^m(\x,u^m,\nabla u^m)\varphi_k(z^m)\eta_R(|\x|)\eta_{h-1}(|u^m|)d\x+\label{theorem_2(4--10)}\\
+\int\limits_{\Omega}|u^m|^{p_0(\x)-2}u^m\varphi_k(z^m)\eta_R(|\x|)\eta_{h-1}(|u^m|)d\x+\nonumber\\
+\int\limits_{\Omega}f^m\varphi_k(z^m)\eta_R(|\x|)\eta_{h-1}(|u^m|)d\x=I_1^{mh}+I_2^{mh}+I_3^{mh}+I_4^{mh}=0,\quad
m\geq h.\nonumber
\end{gather}

{\it Estimates for the integrals $I_2^{mh}-I_4^{mh}$.}
In view of the inequality $|u^m|^{p_0(\x)-1}\eta_{h-1}(|u^m|)\le
h^{p_0^+-1}$, the estimates \eqref{theorem_2(4--5)} and the convergence \eqref{theorem_2(4--4)}, the Lebesgue theorem implies that
\begin{equation}\label{theorem_2(4--11)}
|I_3^{mh}|\le\int\limits_{\Omega(R+1)}h^{p_0^+-1}|\varphi_k(z^m)|d\x=\varepsilon_{1}(m).
\end{equation}
Analogously, thanks to \eqref{theorem_2(1-1)}, we get
\begin{equation}\label{theorem_2(4--12)}|I_4^{mh}|\le\int\limits_\Omega
|f\varphi_k(z^m)|d\x=\varepsilon_{2}(m).
\end{equation}

It is clear that $z^mu^m\ge0 $ for $|u^m|\ge k$, and hence, in view of \eqref{theorem_2(1-4)}, we have
$$
b^m(\x,u^m,\nabla u^m)\varphi_k(z^m)\geq
0\quad \mbox {as}\quad |u^m|\geq k.
$$
Noting this fact and applying \eqref{theorem_2(1-3)}, \eqref{us4}, we have the following estimate:
\begin{gather}\label{theorem_2(4--13)}
-I_2^{mh}\le\int\limits_{\{\Omega:|u^m|<k\}}|b^m(\x,u^m,\nabla
u^m)||\varphi_k(z^m)|\eta_R(|\x|)d\x\leq\\
\leq\widehat{b}(k)\int\limits_{\Omega} \left(\P(\x,\nabla T_k
(u^m))+\Phi_0(\x)\right)|\varphi_k(z^m)|\eta_R(|\x|)d\x,\quad
m\in \mathbb{N}.\nonumber
\end{gather}
Using \eqref{us3}, we deduce that
\begin{gather}
-I_2^{mh}
\leq\frac{\widehat{b}(k)}{\overline{a}}\int\limits_{\Omega} \left(\overline{a}\Phi_0(\x)+\phi(\x)\right)|\varphi_k(z^m)|d\x+\nonumber\\
+\frac{\widehat{b}(k)}{\overline{a}}\int\limits_{\Omega}\a(\x,T_k(u^m),\nabla
T_k(u^m))\cdot
 \nabla T_k(u^m) |\varphi_k(z^m)|\eta_R(|\x|)d\x=I^m_{21}+I^m_{22}.\label{theorem_2(4--14)}
\end{gather}

In view of \eqref{theorem_2(4--6)}, we get
\begin{equation}\label{theorem_2(4--15)}
I^m_{21}=\frac{\widehat{b}(k)}{\overline{a}}\int\limits_{\Omega} \left(\overline{a}\Phi_0(\x)+\phi(\x)\right)|\varphi_k(z^m)|d\x=\varepsilon_3(m).
\end{equation}

Now, using the estimates \eqref{theorem_2(4--11)}, \eqref{theorem_2(4--12)}, \eqref{theorem_2(4--14)}, \eqref{theorem_2(4--15)}, we obtain from \eqref{theorem_2(4--10)} that
\begin{gather}
I^{mh}_5=I^{mh}_{11}-I^{m}_{22}=\int\limits_{\Omega}\a(\x,T_h(u^m),\nabla T_h(u^m))\cdot
\nabla z^m \varphi'_k(z^m)\eta_R(|\x|)\eta_{h-1}(|u^m|)d\x-\nonumber\\
-\frac{\widehat{b}(k)}{\overline{a}}\int\limits_{\Omega}\a(\x,T_k(u^m),\nabla
T_k(u^m))\cdot
 \nabla T_k(u^m) |\varphi_k(z^m)|\eta_R(|\x|)d\x\leq\label{theorem_2(4--17)}\\
\leq\varepsilon_4(m)+\int\limits_{\{\Omega:h-1\leq|u^m|<h\}}\a(\x,T_h(u^m),\nabla T_h(u^m))\cdot \nabla u^m  \eta_R(|\x|) |\varphi_k(z^m)|d\x+\nonumber\\
+\sum\limits_{i=1}^n\int\limits_{\{\Omega:|u^m|<h\}}|a_i(\x,T_h(u^m),\nabla
T_h(u^m))|  |\eta'_R(|\x|)|
|\varphi_k(z^m)|d\x=\nonumber\\=\varepsilon_4(m)+I^{mh}_{12}+I^{mh}_{13},\quad
m\geq h.\nonumber
\end{gather}
{\it An estimate for the right-hand side of \eqref{theorem_2(4--17)}.}
Using \eqref{theorem_2(4--5)}, we have
\begin{gather*}
I_{12}^{mh}=\int\limits_{\{\Omega:h-1\leq |u^m|<h\}}\a(\x,T_h(u^m),\nabla T_h(u^m))\cdot \nabla u^m  \eta_R(|\x|) |\varphi_k(z^m)|d\x\leq\\
\leq\varphi_k(2k)\sup\limits_{m\in
\mathbb{N}}\int\limits_{\{\Omega:h-1\leq|u^m|<h\}}\left(\a(\x,T_m(u^m),\nabla
u^m)\cdot \nabla u^m+\phi\right)  d\x+\\+\varphi_k(2k)\int\limits_{\{\Omega:h-1\leq|u^m|<h\}}\phi d\x.
\end{gather*}
Thanks to \eqref{theorem_2(3-10)}, \eqref{theorem_2(3-1)}, we get
\begin{equation}\label{theorem_2(4--18)}
I_{12}^{mh}\leq\varepsilon(h),\quad m\geq h,
\end{equation}
where $\varepsilon(h)\rightarrow 0$ as $h\rightarrow\infty$.

Then, using \eqref{theorem_2(4--2)} and \eqref{theorem_2(4--8)} with $g=1\in L_{p_i(\cdot)}(\Omega(R+1))$,
we obtain the following estimate:
\begin{gather}
I^{mh}_{13}=\sum\limits_{i=1}^n\int\limits_{\{\Omega:|u^m|<h\}}|a_i(\x,T_h(u^m),\nabla
T_h(u^m))|  |\eta'_R(|\x|)| |\varphi_k(z^m)|d\x\leq\nonumber
\\
\leq C_{10}(h)\sum\limits_{i=1}^n \|\varphi_k(z^m)\|_{p_i(\cdot),\Omega(R+1)}=\varepsilon_5(m).\label{theorem_2(4--19)}
\end{gather}

Combining  \eqref{theorem_2(4--17)}--\eqref{theorem_2(4--19)}, we get the inequalities
\begin{equation}\label{theorem_2(4--20)}
  I^{mh}_5\le\varepsilon(h)+\varepsilon_6(m),\quad m\geq h.
\end{equation}

{\it Estimates for the integral $I_5^{mh}$.} Performing elementary transformations, we derive the following chain of equalities:
\begin{gather*}
I^{mh}_5=\int\limits_{\Omega}\a(\x,T_h(u^m),\nabla T_h(u^m))\cdot
\nabla T_k(u^m)
\varphi'_k(z^m)\eta_R(|\x|)\eta_{h-1}(|u^m|))d\x-\\-
\int\limits_{\Omega}\a(\x,T_h(u^m),\nabla T_h(u^m))\cdot \nabla
T_k(u) \varphi'_k(z^m)\eta_R(|\x|)\eta_{h-1}(|u^m|))d\x-\\
-\frac{\widehat{b}(k)}{\overline{a}}\int\limits_{\Omega}\a(\x,T_k(u^m),\nabla
T_k(u^m))\cdot
 \nabla T_k(u^m) |\varphi_k(z^m)|\eta_R(|\x|)d\x=\\
 =\int\limits_{\Omega}\a(\x,T_k(u^m),\nabla T_k(u^m))\cdot \nabla
T_k(u^m) \psi(z^m)\eta_R(|\x|)-\\-
\int\limits_{\Omega}\a(\x,T_h(u^m),\nabla T_h(u^m))\cdot \nabla
T_k(u) \varphi'_k(z^m)\eta_R(|\x|)\eta_{h-1}(|u^m|))d\x=\\=\int\limits_{\Omega}\a(\x,T_k(u^m),\nabla T_k(u^m))\cdot \nabla
z^m \psi(z^m)\eta_R(|\x|)+\\
+\int\limits_{\Omega}\a(\x,T_k(u^m),\nabla T_k(u^m))\cdot \nabla
T_k(u) \psi(z^m)\eta_R(|\x|)-\\-\int\limits_{\Omega}\a(\x,T_h(u^m),\nabla T_h(u^m))\cdot
\nabla T_k(u) \varphi'_k(z^m)\eta_R(|\x|)\eta_{h-1}(|u^m|)d\x.
\end{gather*}
The following equality is obvious:
\begin{gather}
I^{mh}_5=\int\limits_{\Omega}\a(\x,T_k(u^m),\nabla T_k(u^m))\cdot
\nabla z^m \psi(z^m)\eta_R(|\x|)d\x
-\label{theorem_2(4--21)}\\
-\frac{\widehat{b}(k)}{\overline{a}}\int\limits_{\Omega}\a(\x,T_k(u^m),\nabla
T_k(u^m)) \cdot \nabla T_k(u)
|\varphi_k(z^m)|\eta_R(|\x|)d\x+\nonumber\\
+\int\limits_{\{\Omega:|u^m|\geq k\}}\left(\a(\x,T_k(u^m),\nabla T_k(u^m))-\eta_{h-1}
\a(\x,T_h(u^m),\nabla T_h(u^m))\right)\cdot \nabla T_k(u)
\varphi'_k(z^m)\eta_Rd\x=\nonumber\\=I_{51}^m+I_{52}^{m}+I_{53}^{mh},\quad
m\geq h.\nonumber
\end{gather}

Applying \eqref{theorem_2(4--2)}, we get
 \begin{gather*}|I^{m}_{52}|\leq \frac{\widehat{b}(k)}{\overline{a}}
\sum
\limits_{i=1}^n\int\limits_{\Omega}|a_i(\x,T_k(u^m),\nabla
T_k(u^m))|| \partial_i T_k(u)\varphi_k(z^m)|d\x\le\\
\le C_{11}(k)\sum\limits_{i=1}^n
\|\partial_iT_k(u)\varphi_k(z^m)\|_{p_i(\cdot)}.
\end{gather*}
In view of \eqref{theorem_2(4--8)} with $g=\partial_iT_k(u)\in L_{p_i(\cdot)}(\Omega)$, we have
\begin{equation}\label{theorem_2(4--22)}
I^{m}_{52}=\varepsilon_7(m).
\end{equation}

Using \eqref{theorem_2(4--5)}, \eqref{theorem_2(4--2)}, we see that
\begin{gather}\label{theorem_2(4--23)}
|I^{mh}_{53}|\le C_{12}(h,k)\sum\limits_{i=1}^n\|\partial_i
T_k(u)\chi_{\{\Omega:|u^m|\geq k\}}\|_{p_i(.)}.
\end{gather}
Due to \eqref{theorem_2(3-2)}, we get
 \begin{gather*}\partial_iT_k(u)\chi_{\{\Omega:|u^m|\geq k\}}\rightarrow \partial_iT_k(u)\chi_{\{\Omega:|u|\geq k\}}= 0\quad \mbox{a.e.\ in}\quad \Omega,\quad m\rightarrow\infty;\\
|\partial_iT_k(u)|^{p_i(\x)}\chi_{\{\Omega:|u^m|\geq k\}}\leq |\partial_iT_k(u)|^{p_i(\x)}\in L_1(\Omega), \quad m\in \mathbb{N},\quad i=1,\dots,n.
\end{gather*}
By Lemma \ref{lemma_5}, we deduce that
$$
\partial_iT_k(u)\chi_{\{\Omega:|u^m|\geq k\}}\rightarrow 0\quad \mbox{in}\quad L_{p_i(\cdot)}(\Omega),\quad  m\rightarrow\infty,\quad i=1,\dots,n.
$$
Therefore, using this fact and \eqref{theorem_2(4--23)}, we obtain
\begin{equation}\label{theorem_2(4-24)}
I^{mh}_{53}=\varepsilon_8(m).
\end{equation}

We conclude from \eqref{theorem_2(4--20)}, \eqref{theorem_2(4--21)}, \eqref{theorem_2(4--22)}, \eqref{theorem_2(4-24)} that
\begin{equation}\label{theorem_2(4-25)}
I^m_{51}\le \varepsilon_9(m)+\varepsilon(h),\quad m\geq h.
\end{equation}

Thus, using the notation \eqref{lema_7_4} ($\v^m=\nabla T_k(u^m),\;\v=\nabla T_k(u)$), we get
\begin{gather}
0\le \int\limits_{\Omega}q^m(\x) \psi_k(z^m)\eta_R(|\x|)d\x=\nonumber\\
 =I^{m}_{51}-\int\limits_{\Omega}\a(\x,T_k(u^m),\nabla T_k(u))\cdot \nabla(T_k(u^m)-T_k(u))
 \psi_k(z^m)\eta_R(|\x|)d\x\le\label{theorem_2(4-26)}\\
 \le\varepsilon_9(m)+\varepsilon(h)-\sum\limits_{i=1}^n\int\limits_{\Omega}a_i(\x,T_k(u^m),\nabla T_k(u))
\partial_i(T_k(u^m)-T_k(u))\psi_k(z^m)\eta_R(|\x|)d\x=\nonumber\\ =\varepsilon_9(m)+\varepsilon(h)
-I_{54}^m.\nonumber
\end{gather}

Thanks to \eqref{theorem_2(3-7)}, \eqref{us1'}, \eqref{theorem_2(4--3)}, we establish in the same way as in the proof of the convergence \eqref{conf_1(7)} that
\begin{gather*}
\eta_R(|\x|)\psi_k(z^m)a_i(\x,T_k(u^m),\nabla T_k (u))\rightarrow
\eta_R(|\x|)\varphi'_k(0) a_i(\x,T_k(u),\nabla T_k(u))
\,\text{ in }\, L_{{p}'_i(\cdot)}(\Omega),\quad i=1,\dots,n,
\end{gather*}
as $m\rightarrow \infty$.
Therefore, using \eqref{theorem_2(6-1)}, we deduce that
\begin{equation}\label{theorem_2(4-28)}
I^m_{54}=\sum\limits_{i=1}^n\int\limits_{\Omega}\psi_k(z^m)a_i(\x,T_k(u^m),\nabla
T_k(u))\partial_i(T_k(u^m)-T_k(u))\eta_R(|\x|)d\x=\varepsilon_{10}(m).
\end{equation}

Combining \eqref{theorem_2(4-26)}, \eqref{theorem_2(4-28)}, we get
$$
\int\limits_{\Omega}\psi_k(z^m)q^m(\x)\eta_R(|\x|) d\x\leq \varepsilon_{11}(m)+\varepsilon(h),\quad m\geq h.$$
Using \eqref{theorem_2(4--3)} and passing to the limit in the last inequality as $m\rightarrow\infty$ and then as $h\rightarrow\infty$, we obtain that
\begin{equation*}
\lim_{m\to \infty}\int\limits_{\Omega(R)}q^m(\x) d\x=0.
 \end{equation*}

By Lemma \ref{lemma_7}  and since $R>0$ is arbitrary, we have the convergence \eqref{theorem_2(4--0)}.
Using \eqref{theorem_2(4--0)}, we get for any $R, \rho>0$ the convergence
$$
\nabla T_{\rho}(u^m)\rightarrow \nabla T_{\rho}(u)\quad \mbox{a.e.\ in}\quad  \Omega(R),\quad  m\rightarrow\infty.
$$

As before, applying Egorov's theorem, we can find a set $E_{\rho}\subset\Omega(R)$ such that ${\rm meas}\,E_{\rho}<1/\rho$ and
\begin{equation}\label{theorem_2(4-29)}T_\rho (u^m) \rightarrow T_\rho (u) \; \mbox{uniformly on }\;
  \Omega(R)\setminus E_{\rho},\quad m\rightarrow\infty.
\end{equation}
Recall that
$$
{\rm meas}\,\Omega_{\rho}(R)>{\rm meas}\,\Omega(R)-1/\rho-g(\rho-2),\quad \Omega_{\rho}(R)=\{\x\in\Omega(R)\setminus E_{\rho}: |u(\x)|<\rho-1\},
$$
see \eqref{theorem_2(3--5)}.
The uniform convergence \eqref{theorem_2(4-29)} implies that $|T_\rho (u^m(\x))|<\rho$ on $\Omega_{\rho}(R)$ for any $m\geq m_0$. Therefore,
$$
\nabla u^m\rightarrow \nabla u\quad  \mbox{ a.e.\ in} \quad  \Omega_{\rho}(R),\quad m\rightarrow\infty.
$$
Thus, using the diagonalisation argument with respect to $\rho\in\mathbb{N}$, it is not hard to see that
$$
\nabla u^m\rightarrow \nabla u\quad
   \mbox{a.e.\ in}\quad \Omega(R),\quad m\rightarrow\infty.
$$

Then, by the diagonalisation argument with respect to $R\in \mathbb{N}$, we obtain the convergence along a subsequence
\begin{equation}\label{theorem_2(4-30)}
\nabla u^m\rightarrow \nabla u \quad  \mbox{ a.e.\ in} \quad \Omega,\quad m\rightarrow\infty;
  \end{equation}
\begin{equation}\label{theorem_2(4-31)}
\nabla T_k(u^m)\rightarrow \nabla T_k(u) \quad  \mbox{ a.e.\ in} \quad \Omega,\quad m\rightarrow\infty.
  \end{equation}

The continuity of $b(\x,s_0,\s)$ with respect to $(s_0,\s) $ and the convergences \eqref{theorem_2(3-2)}, \eqref{theorem_2(4-30)} yield
\begin{equation}\label{theorem_2(4-32)}
 b^m(\x,u^m,\nabla u^m)\to
b(\x,u, \nabla u)\quad\mbox{a.e.\ in} \quad \Omega,\quad m\rightarrow\infty.
\end{equation}
Using the estimate \eqref{theorem_2(2-8)} and taking into account \eqref{theorem_2(4-32)}, we obtain from the Fatou lemma that
\begin{equation}\label{theorem_2(4-33)}
b(\x,u, \nabla u)\in L_1(\Omega).
\end{equation}
Thus, the condition 1) from Definition \ref{D1} is satisfied.

\subsection{}\label{s5.6}
Let us prove that
\begin{equation}\label{theorem_2(5-1)}
b^m(\x,u^m,\nabla u^m)\to
b(\x,u,\nabla u)\quad\mbox{in} \quad L_{1,{\rm loc}}(\overline{\Omega}),\quad m\rightarrow \infty.
\end{equation}

Let $Q$ be an arbitrary subset of $\Omega$.
For any measurable set $E\subset Q$ we have
\begin{gather}\label{theorem_2(5-3)}
\int\limits_E |b^m(\x,u^m,\nabla u^m)|d\x\leq\\\leq \int\limits_{ \{E:|u^m|<h\}}|b^m(\x,u^m,\nabla u^m)|d\x+\int\limits_{\{\Omega:|u^m|\geq h\}}|b^m(\x,u^m,\nabla u^m)|d\x.\nonumber
\end{gather}
Applying \eqref{theorem_2(1-3)}, \eqref{us4}, we deduce that
$$
\int\limits_{\{E:|u^m|<h\}}|b^m(\x,u^m,\nabla u^m)|d\x\leq  \widehat{b}(h)\int\limits_{ \{E:|u^m|<h\}}\left(\P(\x,\nabla u^m)
+\Phi_0(\x)\right)d\x.$$
Due to $\Phi_0\in L_1(E)$, the convergence \eqref{theorem_2(4--0)} and the absolute continuity of integrals in the right-hand side of the last inequality, for any $\varepsilon>0$ there exists $\alpha(\varepsilon)$ such that
for every $E$ with ${\rm meas}\;E<\alpha(\varepsilon)$ the following inequalities are satisfied:
\begin{equation}\label{theorem_2(5-4)}
\int\limits_{ \{E:|u^m|<h\}}|b^m(\x,u^m,\nabla u^m)|d\x<\frac{\varepsilon}{2},\quad m\in\mathbb{N}.
\end{equation}

Combining \eqref{theorem_2(3-10)}, \eqref{theorem_2(5-3)}, \eqref{theorem_2(5-4)}, we obtain that
$$
\int\limits_E |b^m(\x,u^m,\nabla u^m)|d\x< \varepsilon\quad \forall\;E\quad \mbox{such that}\; {\rm meas}\;E<\alpha(\varepsilon),\quad m\in\mathbb{N}.
$$
This implies that the sequence $\{b^m(\x,u^m,\nabla u^m)\}_{m\in \mathbb{N}}$ has uniformly absolutely continuous integrals over the set $Q$. By Lemma \ref{lemma_9}, there is the convergence
$$
b^m(\x,u^m,\nabla u^m)\to
b(\x,u,\nabla u)\quad\mbox{in} \quad L_{1}(Q),\quad m\rightarrow \infty,
$$
for any bounded set $Q\subset \Omega$. The convergence \eqref{theorem_2(5-1)} is proved.

\subsection{}\label{s5.7}

To prove $\eqref{intn}$, we take $v=T_k(u^m-\xi)$ with $\xi\in C_0^1(\Omega)$ as a test function in \eqref{theorem_2(1-5)}, and get
\begin{gather}\label{theorem_2(7-1)}
\int\limits_{\Omega} \a(\x,T_m(u^m),\nabla u^m)\cdot\nabla T_k(u^m-\xi)d\x+\\
+\int\limits_{\Omega} \left(b^m(\x,u^m,\nabla u^m)+|u^m|^{p_0(\x)-2}u^m+f^m\right)T_k(u^m-\xi)d\x=I^m+J^m=0.\nonumber
\end{gather}

Let $M=k+\|\xi\|_{\infty}$. If $|u^m|\geq M$, then $|u^m-\xi|\geq |u^m|-\|\xi\|_{\infty}\geq k$. Therefore, $\{\Omega:|u^m-\xi|< k\}\subseteq \{\Omega:|u^m|< M\}$, and hence
\begin{gather}
I^m=\int\limits_{\Omega} \a(\x,T_m(u^m),\nabla u^m)\cdot\nabla T_k(u^m-\xi)d\x=\nonumber\\
=\int\limits_{\Omega} \a(\x,T_M(u^m),\nabla T_M(u^m))\cdot\nabla T_k(u^m-\xi)d\x=\label{theorem_2(7-2)}\\=\int\limits_{\Omega} \a(\x,T_M(u^m),\nabla T_M(u^m))\cdot(\nabla T_M(u^m)-\nabla\xi)\chi_{\{\Omega:|u^m-\xi|< k\}}d\x,\quad m\geq M.\nonumber
\end{gather}

Let $v^m=u^m-\xi$, $v=u-\xi$. Since $|v|= k$ in the set where $|v^m|\to k$ as $m\rightarrow\infty$, we have $\nabla v=0$  a.a.\ in $\Omega$. Consequently,
\begin{gather}
\nabla T_k(v^m)-\nabla T_k(v)=\chi_{\{\Omega:|v^m|<k\}}(\nabla v^m-\nabla
v)+\nonumber\\+\left(\chi_{\{\Omega:|v^m|<k\}}-\chi_{\{\Omega:|v|<k\}}\right)\nabla v\to 0\quad \mbox{a.e.\ in} \quad \Omega,\quad m\rightarrow\infty.\label{theorem_2(7-4)}
\end{gather}
Using the inequality \eqref{ung} and the assumptions \eqref{us1'}, \eqref{us3}, we deduce for any $\varepsilon\in (0,1)$ that
\begin{gather*}
a(\x,T_M(u^m),\nabla T_M(u^m))\cdot(\nabla
T_M(u^m)-\nabla\xi)\chi_{\{\Omega:|u^m-\xi|<
k\}}\ge\\\ge\left((\overline{a}-\varepsilon
\widehat{A}^n(M))\P(\x,\nabla
T_M(u^m))-\widehat{A}_+(M)\Psi^n(\x)-\phi(\x)-C_{13}(\varepsilon)\P(\x,\nabla
\xi)\right)\chi_{\{\Omega:|u^m-\xi|<
k\}}.
\end{gather*}
Taking $\varepsilon<\overline{a}/\widehat{A}^n(M)$, we get the inequality
\begin{gather*}
a(\x,T_M(u^m),\nabla T_M(u^m))\cdot\nabla( T_M(u^m)-\xi)\chi_{\{\Omega:|u^m-\xi|<
k\}}\ge\\\ge -C_{14}(\Psi^n(\x)+\P(\x,\nabla \xi))-\phi(\x)\in L_1(\Omega).
\end{gather*}
From the convergences \eqref{theorem_2(7-4)}, \eqref{theorem_2(3-7)}, \eqref{theorem_2(4-31)}, the continuity of $\a(\x,s_0,\s)$ with respect to $(s_0,\s)$, and Fatou's lemma, we have
\begin{gather}\lim_{m\to \infty}\inf I^m\geq \int\limits_{\Omega}\a(\x,T_M(u),\nabla T_M(u))\cdot \nabla T_k(
u-\xi)d\x=\label{theorem_2(7-5)}\\
=\int\limits_{\Omega}\a(\x,u,\nabla u)\cdot \nabla T_k(
u-\xi)d\x.\nonumber
\end{gather}

Using Lemma \ref{lemma_8}, we get from \eqref{theorem_2(3-2)} that
\begin{equation}\label{theorem_2(7-6)}
 T_k(u^m-\xi)\stackrel {*}\rightharpoonup T_k(u-\xi)\quad \text{in} \quad L_{\infty}(\Omega),\quad m\rightarrow \infty.
\end{equation}

Let us split $J^m$ into two summands. The first integral
$$
J^m_1=\int\limits_{\Omega} \left(b^m(\x,u^m,\nabla u^m)+|u^m|^{p_0(\x)-2}u^m\right)T_k(u^m-\xi)d\x
$$
can be estimated as follows. Let ${\rm supp}\,\xi\subset \Omega(l),\;l\geq l_0,\;$   $c^m(\x,u^m,\nabla u^m)=b^m(\x,u^m,\nabla u^m)+|u^m|^{p_0(\x)-2}u^m$, $c(\x,u,\nabla u)=b(\x,u,\nabla u)+|u|^{p_0(\x)-2}u$. Then, recalling  \eqref{theorem_2(1-4)}, we get for $l\geq l_0$ that
\begin{gather*}
J^m_1=\int\limits_{\Omega\backslash \Omega(l)}c^m(\x,u^m,\nabla u^m)T_k(u^m)d\x+\int\limits_{\Omega(l)}c^m(\x,u^m,\nabla u^m)T_k(v^m)d\x\geq\\\geq\int\limits_{\Omega(l)}c^m(\x,u^m,\nabla u^m)T_k(v^m)d\x= \overline{J}^{lm}_1.
\end{gather*}
Applying \eqref{theorem_2(3-8)}, \eqref{theorem_2(5-1)},  \eqref{theorem_2(7-6)}, we pass to the limit as $m\to \infty$ and then as $l\to \infty$ to obtain
\begin{equation}
\label{theorem_2(7-7)}\int\limits_{\Omega}(b(\x,u,\nabla u)+|u|^{p_0(\x)-2}u)T_k(u-\xi)d\x=\lim_{l\to \infty}\lim_{m\to \infty}\overline{J}^{lm}_1\leq \lim_{m\to \infty}\inf J_1^m.
\end{equation}

Using \eqref{theorem_2(1-0)}, \eqref{theorem_2(7-6)}, and passing to the limit as $m\rightarrow\infty$ in the second integral, we conclude that
\begin{equation}\label{theorem_2(7-8)}
J^m_2=\int\limits_{\Omega}f^m T_k(u^m-\xi)d\x\rightarrow\int\limits_{\Omega}f T_k(u-\xi)d\x.
\end{equation}

Combining \eqref{theorem_2(7-1)}, \eqref{theorem_2(7-5)}, \eqref{theorem_2(7-7)}, \eqref{theorem_2(7-8)}, we derive \eqref{intn}.
\qed

\section{Renormalized solution}\label{s6}
In this section, we prove that the entropy solution is a renormalized solution of the problem \eqref{ur}, \eqref{gu}, \eqref{us6}.

\begin{definition}\label{D3}
	A renormalized solution of the problem \eqref{ur}, \eqref{gu}, \eqref{us6} is a function $u\in \mathring{\mathcal{T}}^1_{\overrightarrow{\bf p}(\cdot)}(\Omega)$ such that
	\begin{enumerate}
		\item[1)] $B(\x)=b(\x,u,\nabla u)\in L_{1}(\Omega)$;
		\item[2)] $\lim\limits_{h\rightarrow \infty}\int\limits_{h\leq |u|<  h+1}\P(\x,\nabla u) d\x=0$;
		\item[3)] for any smooth function $S\in W^1_{\infty}(\mathbb{R})$ with compact support and any function $\xi\in C_0^1(\Omega)$ there holds
		\begin{equation}
		\label{ren}
		\langle(b(\x,u, \nabla u) +|u|^{p_0(\x)-2}u+f)S(u)\xi\rangle
		+\langle \a(\x,u,\nabla u)\cdot(S'(u)\xi\nabla u+S(u)\nabla\xi)\rangle= 0.
		\end{equation}
	\end{enumerate}
\end{definition}

\begin{theorem}\label{t3}
	Let the assumptions \eqref{us0}--\eqref{us5} be satisfied.
	Then the entropy solution $u\in \mathring {\mathcal{T}}^1_{\overrightarrow{\bf p}(\cdot)}(\Omega)$ obtained in Theorem $\ref{t1}$ is a renormalized solution of the problem \eqref{ur}, \eqref{gu}, \eqref{us6}.
\end{theorem}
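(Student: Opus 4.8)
The conditions 1) and 2) of Definition \ref{D3} require no new work: condition 1) coincides with condition 1) of Definition \ref{D1} and therefore already holds for the entropy solution $u$; condition 2) is exactly the conclusion of Lemma \ref{lemma_3} taken with $k=1$. Hence only the renormalized identity \eqref{ren} has to be established.

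The plan is to return to the approximate generalized solutions $u^m\in\mathring{W}^{1}_{\overrightarrow{\bf p}(\cdot)}(\Omega)$ constructed in the proof of Theorem \ref{t1} and to reuse the convergences obtained in Section \ref{s5}. Fix a smooth $S\in W^1_{\infty}(\mathbb{R})$ with $\operatorname{supp}S\subset[-k,k]$ and a function $\xi\in C_0^1(\Omega)$. One first checks that $v=S(u^m)\xi$ is an admissible test function in \eqref{theorem_2(1-5)}: it is bounded, supported in $\operatorname{supp}\xi$, and $\nabla(S(u^m)\xi)=S'(u^m)\xi\nabla u^m+S(u^m)\nabla\xi\in\L_{\overrightarrow{\p}(\cdot)}(\Omega)$ (on $\{|u^m|\le k\}$ one has $\nabla u^m=\nabla T_k(u^m)$), so $v\in\mathring{W}^{1}_{\overrightarrow{\bf p}(\cdot)}(\Omega)$. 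Since $\operatorname{supp}S\subset[-k,k]$, for $m\ge k$ we have $S(u^m)=S(T_k(u^m))$ and $S'(u^m)\nabla u^m=S'(T_k(u^m))\nabla T_k(u^m)$ a.e., while on the set where $S(u^m)$ or $S'(u^m)$ is nonzero also $\a(\x,T_m(u^m),\nabla u^m)=\a(\x,T_k(u^m),\nabla T_k(u^m))$ a.e.\ (using $\nabla u^m=0$ a.e.\ on $\{|u^m|=k\}$). Substituting $v$ into \eqref{theorem_2(1-5)} thus gives, for $m\ge k$,
\begin{gather*}
\langle(f^m+b^m(\x,u^m,\nabla u^m)+|u^m|^{p_0(\x)-2}u^m)S(T_k(u^m))\xi\rangle+\\
\langle\a(\x,T_k(u^m),\nabla T_k(u^m))\cdot(S'(T_k(u^m))\xi\nabla T_k(u^m)+S(T_k(u^m))\nabla\xi)\rangle=0.
\end{gather*}

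It remains to pass to the limit as $m\to\infty$. By \eqref{theorem_2(3-7)} and the continuity of $S$, $S(T_k(u^m))\xi\to S(T_k(u))\xi$ a.e., boundedly, with support in $\operatorname{supp}\xi$; combined with $f^m\to f$ in $L_1(\Omega)$ (see \eqref{theorem_2(1-0)}) and $b^m(\x,u^m,\nabla u^m)\to b(\x,u,\nabla u)$, $|u^m|^{p_0(\x)-2}u^m\to|u|^{p_0(\x)-2}u$ in $L_{1,{\rm loc}}(\overline\Omega)$ (see \eqref{theorem_2(5-1)}, \eqref{theorem_2(3-8)}), Lemma \ref{lemma_8} with exponent $1$ shows the first bracket tends to $\langle(f+b(\x,u,\nabla u)+|u|^{p_0(\x)-2}u)S(u)\xi\rangle$. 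For the second bracket, \eqref{theorem_2(3-7)}, \eqref{theorem_2(4-31)} and the continuity of $\a$ give $\a(\x,T_k(u^m),\nabla T_k(u^m))\to\a(\x,T_k(u),\nabla T_k(u))$ a.e., while by \eqref{theorem_2(4--2)} this sequence is bounded in $\L_{\overrightarrow{\p}'(\cdot)}(\Omega)$, so Lemma \ref{lemma_4} yields weak convergence in $\L_{\overrightarrow{\p}'(\cdot)}(\Omega)$; on the other hand, by the strong local convergence \eqref{theorem_2(4--0)} of $\nabla T_k(u^m)$ together with Lemma \ref{lemma_8}, $S'(T_k(u^m))\xi\nabla T_k(u^m)\to S'(T_k(u))\xi\nabla T_k(u)$ strongly in $\L_{\overrightarrow{\p}(\cdot)}(\Omega)$, and by dominated convergence (Lemma \ref{lemma_5}) $S(T_k(u^m))\nabla\xi\to S(T_k(u))\nabla\xi$ strongly in $\L_{\overrightarrow{\p}(\cdot)}(\Omega)$, all of them supported in $\operatorname{supp}\xi$. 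Passing to the limit in the weak--strong pairings, and using $S(u)=S'(u)=0$ for $|u|>k$ together with $\nabla u=0$ a.e.\ on $\{|u|=k\}$, the second bracket tends to $\langle\a(\x,u,\nabla u)\cdot(S'(u)\xi\nabla u+S(u)\nabla\xi)\rangle$. Collecting the two limits yields \eqref{ren}, which completes the proof.

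The only genuinely delicate point is the convergence of the term $\langle\a(\x,T_k(u^m),\nabla T_k(u^m))\cdot S'(T_k(u^m))\xi\nabla T_k(u^m)\rangle$, a product in which both $\a(\x,T_k(u^m),\nabla T_k(u^m))$ and $\nabla T_k(u^m)$ converge, a priori, only weakly; it is exactly here that the strong local convergence of the truncated gradients obtained in the proof of Theorem \ref{t1}, relation \eqref{theorem_2(4--0)}, is indispensable, since it upgrades one of the two factors to strong convergence. Everything else is a straightforward consequence of the convergences already derived in Section \ref{s5} and of the elementary Lemmas \ref{lemma_4}, \ref{lemma_5} and \ref{lemma_8}.
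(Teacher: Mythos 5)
Your proposal is correct and follows essentially the same route as the paper: verify conditions 1) and 2) via Definition \ref{D1} and Lemma \ref{lemma_3}, test \eqref{theorem_2(1-5)} with $S(u^m)\xi$, replace $\a(\x,T_m(u^m),\nabla u^m)$ by the truncated version on $\operatorname{supp}S$, and pass to the limit by pairing the weak convergence of $\a(\x,T_M(u^m),\nabla T_M(u^m))$ (Lemma \ref{lemma_4}) with the strong convergence of $S'(u^m)\xi\nabla T_M(u^m)+S(u^m)\nabla\xi$ coming from \eqref{theorem_2(4--0)} and Lemmas \ref{lemma_5}, \ref{lemma_8}, and handling the zero-order terms via \eqref{theorem_2(1-0)}, \eqref{theorem_2(3-8)}, \eqref{theorem_2(5-1)}. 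The only differences are cosmetic (your $k$ is the paper's $M$, and you spell out the admissibility of the test function and the level-set identification, which the paper leaves implicit).
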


\begin{proof}
We prove that the entropy solution satisfies Definition \ref{D3} of a renormalized solution.
The condition 1) holds true since it coincides with the condition 1) of Definition \ref{D1}.
The condition 2) is also satisfied, see \eqref{lemma_3_}.

Let us prove the equality \eqref{ren}.
Let $\{u^m\}_{m\in \mathbb{N}}$ be a sequence of weak solutions of the problem \eqref{theorem_2(1-2)}, \eqref{gu}.
Let $S\in W^1_{\infty}(\mathbb{R})$ be such that ${\rm supp}\;S\subset[-M,M]$ for $M>0$. For any function $\xi\in C_0^1(\Omega)$, taking $S(u^m)\xi\in\mathring
{W}_{\overrightarrow{\bf p}(\cdot)}^1(\Omega)$ as a test function in \eqref{theorem_2(1-5)}, we derive that
 \begin{gather}
\langle \a(\x,T_m(u^m), \nabla u^m)\cdot(S'(u^m)\xi\nabla u^m+S(u^m)\nabla\xi)\rangle+\nonumber\\+\langle (f^m(\x)+b^m(\x,u^m,\nabla u^m)+|u^m|^{p_0(\x)-2}u^m)S(u^m)\xi\rangle=I^m+J^m=0,\quad m\in \mathbb{N}.\label{7-1}
\end{gather}

Clearly,
\begin{gather*}
I^m=\int\limits_{\Omega}\a(\x,T_m (u^m), \nabla u^m)\cdot(S'(u^m)\xi\nabla u^m+S(u^m)\nabla\xi)d\x=\\
=\int\limits_{\Omega}\a(\x,T_M(u^m), \nabla T_M(u^m))\cdot(S'(u^m)\xi\nabla T_M (u^m)+S(u^m)\nabla\xi)d\x,\quad m\geq M.\end{gather*}

As in Lemma \ref{lemma_7} (see \eqref{conf_1(6)}), we obtain the weak convergence
 \begin{gather}\label{7-2}
\a(\x,T_M(u^m),\nabla T_M(u^m))\rightharpoonup \a(\x,T_M(u),\nabla T_M(u))\quad\mbox{in}\quad \L_{\overrightarrow{\p}'(\cdot)}(\Omega),\quad m\rightarrow \infty.
\end{gather}
Using the convergences \eqref{theorem_2(3-2)},  \eqref{theorem_2(4--0)}, and noting that ${\rm supp}\,\xi$ is a bounded subset of $\Omega$, we deduce from Lemma \ref{lemma_8} that
\begin{gather}\label{7-3}
S'(u^m)\xi\nabla T_M(u^m)+S(u^m)\nabla\xi\rightarrow S'(u)\xi\nabla T_M(u)+S(u)\nabla\xi\quad\mbox{in}\quad \L_{\overrightarrow{\p}(\cdot)}(\Omega),
\end{gather}
as $m\rightarrow \infty$.
Combining \eqref{7-2}, \eqref{7-3}, we get
\begin{gather}
\lim_{m\to \infty} I^m=\int\limits_{\Omega} \a(\x,T_M(u),\nabla T_M(u))\cdot(S'(u)\xi\nabla T_M (u)+S(u)\nabla\xi)d\x=\nonumber
\\=\int\limits_{\Omega}\a(\x,u,\nabla u)\cdot(S'(u)\xi\nabla u+S(u)\nabla\xi)d\x.\label{7-4}
\end{gather}

By Lemma \ref{lemma_8} we have
$$
 S(u^m)\xi\stackrel{*}\rightharpoonup S(u)\xi\quad \text{in} \quad L_{\infty}(\Omega),\quad m\rightarrow \infty.
$$
Therefore, in view of \eqref{theorem_2(1-0)}, \eqref{theorem_2(3-8)}, \eqref{theorem_2(5-1)}, we obtain
\begin{gather}\label{7-5}
\lim_{m\to \infty} J^m=\int\limits_{\Omega}(f+b(\x,u,\nabla u)+|u|^{p_0(\x)-2}u)S(u)\xi d\x.
\end{gather}
 Combining \eqref{7-1}, \eqref{7-4}, \eqref{7-5}, we get the equality \eqref{ren}.
 Thus, we come to the conclusion that $u$ is a renormalized solution of the problem \eqref{ur}, \eqref{gu}.
\end{proof}

\section{Appendix}\label{s7}

\begin{theorem}\label{t2}
Let the assumption \eqref{us0}--\eqref{us5} be satisfied. Then there exists a generalized solution of the problem \eqref{theorem_2(1-2)}, \eqref{gu}.
\end{theorem}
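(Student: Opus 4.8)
The plan is to solve the operator equation \eqref{theorem_2(1-5-6)} by showing that ${\bf A}^m=\A^m+A_0^m$, defined in \eqref{theorem_2(1-6)}, is a bounded, coercive and pseudomonotone operator from the reflexive Banach space $V=\mathring{W}^1_{\overrightarrow{\bf p}(\cdot)}(\Omega)$ (reflexive by \cite{Fan}) into its dual $V^*=W^{-1}_{\overrightarrow{\bf p}'(\cdot)}(\Omega)$, and then applying the classical surjectivity theorem for such operators; taking $0\in{\rm Im}\,{\bf A}^m$ produces the required generalized solution $u^m$. The simplification coming from the truncation is that $\a^m(\x,s_0,\s)=\a(\x,T_m(s_0),\s)$ satisfies \eqref{us1}--\eqref{us3} (and \eqref{us1'}) with the nondecreasing factors $\widehat{a}_i(|s_0|),\widehat{A}_i(|s_0|)$ replaced by the constants $\widehat{a}_i(m),\widehat{A}_i(m)$, while $b^m$ is dominated by $m\chi_{\Omega(m)}$ and still obeys the sign condition \eqref{theorem_2(1-4)}.

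\emph{Boundedness and coercivity.} Using \eqref{us1'} one gets $\rho_{p_i'(\cdot)}(a_i^m(\x,u,\nabla u))\le\widehat{A}_i(m)(\|\P(\cdot,\nabla u)\|_{1}+\|\Psi_i\|_{1})$, and $\rho_{p_0'(\cdot)}(|u|^{p_0(\cdot)-2}u)=\rho_{p_0(\cdot)}(u)$, $\|b^m(\x,u,\nabla u)\|_{p_0'(\cdot)}+\|f^m\|_{p_0'(\cdot)}\le C(m)$; together with \eqref{st2}, \eqref{st3}, \eqref{st4} and the H\"older inequality \eqref{gel}, this gives boundedness (and demicontinuity, via Vitali-type arguments as in the lemmas above) of ${\bf A}^m$ on $V$. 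Testing with $u$ and invoking \eqref{us3}, \eqref{theorem_2(1-4)} and \eqref{gel},
\[
\langle{\bf A}^m(u),u\rangle\ge\overline{a}\int_\Omega\P(\x,\nabla u)\,d\x+\rho_{p_0(\cdot)}(u)-2\|f^m\|_{p_0'(\cdot)}\|u\|_{p_0(\cdot)}-\|\phi\|_{1},
\]
and since every exponent of $\overrightarrow{\bf p}(\cdot)$ exceeds $1$, \eqref{st2}--\eqref{st3} yield $\langle{\bf A}^m(u),u\rangle/\|u\|_V\to+\infty$ as $\|u\|_V\to\infty$.

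\emph{Pseudomonotonicity} is the core. Let $u^j\rightharpoonup u$ in $V$ with $\limsup_j\langle{\bf A}^m(u^j),u^j-u\rangle\le0$. Since $\{u^j\}$ is bounded in $\mathring{H}^1_{\overrightarrow{\p}(\cdot)}(\Omega)$, Lemma \ref{lemma_0} (applied on balls $\Omega(R)$ after localization by a cut-off, followed by a diagonal argument) gives, along a subsequence, $u^j\to u$ a.e.\ in $\Omega$ and in $L_{1,{\rm loc}}(\overline\Omega)$. Then $\int_\Omega f^m(u^j-u)\,d\x\to0$; $\int_\Omega b^m(\x,u^j,\nabla u^j)(u^j-u)\,d\x\to0$ because the integrand is supported in $\Omega(m)$ and dominated by $m|u^j-u|$; and $\int_\Omega|u^j|^{p_0(\x)-2}u^j(u^j-u)\,d\x\ge\int_\Omega|u|^{p_0(\x)-2}u(u^j-u)\,d\x\to0$ by monotonicity of $r\mapsto|r|^{p_0(\x)-2}r$. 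Hence $\liminf_j\langle A_0^m(u^j),u^j-u\rangle\ge0$, so $\limsup_j\langle\A^m(u^j),u^j-u\rangle\le0$. Since $\langle\A^m(u^j),u^j-u\rangle=\int_\Omega q^j\,d\x+\int_\Omega\a(\x,T_m(u^j),\nabla u)\cdot(\nabla u^j-\nabla u)\,d\x$ with $q^j$ as in \eqref{lema_7_4} for $k=m$, $\v^j=\nabla u^j$, $\v=\nabla u$, and since $\a(\x,T_m(u^j),\nabla u)\to\a(\x,T_m(u),\nabla u)$ in $\L_{\overrightarrow{\p}'(\cdot)}(\Omega)$ by Lemma \ref{lemma_5} while $\nabla u^j-\nabla u\rightharpoonup0$, one obtains $\int_\Omega q^j\,d\x\to0$. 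Lemma \ref{lemma_7} then yields $\nabla u^j\to\nabla u$ a.e.\ in $\Omega$ and strongly in $\L_{\overrightarrow{\p}(\cdot)}(\Omega)$ along a further subsequence. From these a.e.\ convergences, the continuity of $\a,b$, the bounds \eqref{us1'}, \eqref{us4}, the domination of $b^m$, and Lemmas \ref{lemma_4}, \ref{lemma_5} with \eqref{st4}, it follows that $\A^m(u^j)\rightharpoonup\A^m(u)$, $b^m(\x,u^j,\nabla u^j)\to b^m(\x,u,\nabla u)$ in $L_{p_0'(\cdot)}(\Omega)$ and $|u^j|^{p_0(\x)-2}u^j\rightharpoonup|u|^{p_0(\x)-2}u$ in $L_{p_0'(\cdot)}(\Omega)$; hence ${\bf A}^m(u^j)\rightharpoonup{\bf A}^m(u)$ in $V^*$ and $\langle{\bf A}^m(u^j),u^j-u\rangle\to0$, so $\langle{\bf A}^m(u^j),u^j-w\rangle\to\langle{\bf A}^m(u),u-w\rangle$ for every $w\in V$, which is pseudomonotonicity.

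I expect the main difficulty to be this pseudomonotonicity step on the unbounded domain: only local compactness (Lemma \ref{lemma_0}) is available, so a.e.\ convergence of $\{u^j\}$ must replace strong $L_{p(\cdot)}$ convergence throughout, and one must exploit the monotone term $|u|^{p_0(\cdot)-2}u$ and the sign condition \eqref{theorem_2(1-4)} to separate off $A_0^m$ before the Leray--Lions argument of Lemma \ref{lemma_7} can be applied to the principal part $\A^m$. An alternative is a Galerkin scheme: solve the finite-dimensional problems by Brouwer's theorem (using the coercivity estimate for the a priori bound) and pass to the limit, but this needs the same monotonicity and Leray--Lions ingredients.
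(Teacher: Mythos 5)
Your proposal is correct and takes essentially the same route as the paper: boundedness and coercivity of ${\bf A}^m$ from \eqref{us1'}, \eqref{us3}, \eqref{theorem_2(1-4)} together with \eqref{st2}--\eqref{st4}, pseudomonotonicity via the local compact embedding of Lemma \ref{lemma_0} with a diagonal argument, the sign/monotonicity of the lower-order terms, and the Leray--Lions argument of Lemma \ref{lemma_7}, followed by Lions' surjectivity theorem (Lemma \ref{kozhelm-lemma_3}). The only (inessential) differences are that you split off the monotone term $|u|^{p_0(\x)-2}u$ before applying Lemma \ref{lemma_7} and use the Brezis-type form of pseudomonotonicity, whereas the paper keeps that term coupled with the principal part (see \eqref{theorem_3(27)}--\eqref{theorem_3(31)}) and verifies Definition \ref{kozhelm-def_3}; your final claim $\langle{\bf A}^m(u^j),u^j-u\rangle\to 0$ still needs the same two-sided sandwich (monotonicity from below, the hypothesis from above) to handle $\int_\Omega|u^j|^{p_0(\x)-2}u^j(u^j-u)\,d\x$, all ingredients for which you have already established.
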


The proof of this theorem is based on the assertion that the operator $A$ is pseudo-monotone.

\begin{definition}\label{kozhelm-def_3}
Let $V$ be a reflexive Banach space. An operator $A:V\rightarrow V'$ is pseudo-monotone if
\begin{enumerate}
\item[(i)] $A$ is a bounded operator;
\item[(ii)] assumptions
\begin{equation}
\begin{array}{l}
u^j\rightharpoonup u\quad \mbox{in}\quad V,\\
A(u^j)\rightharpoonup \omega \quad\mbox{in}\quad V',\\
 \lim\limits_{j\to \infty}\sup <A(u^j),u^j>\,\leq\, <\omega ,u>\label{psevdomon1}
 \end{array}
 \end{equation} imply that
\begin{equation}\label{psevdomon2}
\omega=A(u),\quad \lim_{j\to \infty}\inf< A(u^j),u^j>=< A(u),u>.
\end{equation}
\end{enumerate}
\end{definition}

\begin{lemma}[\protect{\cite[Chapter II, \S2, Theorem 2.7]{Lions}}]\label{kozhelm-lemma_3}
Let V be a reflexive separable Banach space.
Assume that an operator $A:V\rightarrow V'$ has the following properties:
$A$ is pseudo-monotone and coercive, i.e.,
\begin{equation}\label{kozhelm-koerc}
\frac{< A(u),u>}{\|u\|}\rightarrow\infty,\quad \|u\|\rightarrow\infty.
\end{equation}
Then the map $A:V\rightarrow V'$ is surjective, i.e., for any $F\in V'$ there exists $u\in V$ such that $A(u)=F$.
\end{lemma}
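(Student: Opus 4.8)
The plan is to prove this (classical, due to Brézis and Lions) surjectivity result by the Galerkin method. Fix $F\in V'$. Since $V$ is separable, choose a linearly independent sequence $w_1,w_2,\dots$ whose span is dense in $V$, and set $V_k=\mathrm{span}\{w_1,\dots,w_k\}$. A preliminary observation: a pseudo-monotone operator is demicontinuous, that is, $v^j\to v$ strongly in $V$ implies $A(v^j)\rightharpoonup A(v)$ in $V'$. Indeed, by property (i) of Definition \ref{kozhelm-def_3} the sequence $\{A(v^j)\}$ is bounded in $V'$, so any subsequence has a further subsequence with $A(v^{j'})\rightharpoonup\chi$; since $<A(v^{j'}),v^{j'}>\,=\,<A(v^{j'}),v^{j'}-v>+<A(v^{j'}),v>\,\to\,<\chi,v>$, the hypotheses \eqref{psevdomon1} hold, \eqref{psevdomon2} gives $\chi=A(v)$, and the limit being independent of the subsequence, $A(v^j)\rightharpoonup A(v)$. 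In particular, for each $k$ the map $\Phi_k\colon\mathbb{R}^k\to\mathbb{R}^k$, $\Phi_k(\eta)=\bigl(\,<A\bigl(\sum_{j=1}^k\eta_jw_j\bigr)-F,\,w_i>\,\bigr)_{i=1,\dots,k}$, is continuous.

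The finite-dimensional step carries the real work. By coercivity \eqref{kozhelm-koerc} there is $R>0$, independent of $k$, with $<A(v),v>\,\ge\,(\|F\|_{V'}+1)\|v\|_V$ whenever $\|v\|_V\ge R$, so $<A(v)-F,v>\,\ge\,\|v\|_V$ for such $v$. Writing $v=\sum_j\eta_jw_j$ we have $\Phi_k(\eta)\cdot\eta=<A(v)-F,v>$, which is positive on every sufficiently large Euclidean sphere in $\mathbb{R}^k$ (its $V_k$-image lying in $\{\|v\|_V>R\}$ by equivalence of norms on $V_k$); hence by the acute-angle consequence of Brouwer's fixed point theorem there is $\eta^k$ with $\Phi_k(\eta^k)=0$, i.e.\ $u^k:=\sum_j\eta^k_jw_j\in V_k$ satisfies $<A(u^k),w_i>\,=\,<F,w_i>$ for $i=1,\dots,k$. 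Testing this identity with $v=u^k\in V_k$ gives $<A(u^k)-F,u^k>\,=\,0$, which by the coercivity inequality forces $\|u^k\|_V\le R$ uniformly in $k$.

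Now $\{u^k\}$ is bounded in the reflexive space $V$ and, by (i), $\{A(u^k)\}$ is bounded in the reflexive space $V'$; extract a subsequence with $u^k\rightharpoonup u$ in $V$ and $A(u^k)\rightharpoonup\chi$ in $V'$. For fixed $i$ and all $k\ge i$, $<A(u^k),w_i>\,=\,<F,w_i>$, so in the limit $<\chi,w_i>\,=\,<F,w_i>$; density of $\bigcup_kV_k$ in $V$ then gives $\chi=F$, that is $A(u^k)\rightharpoonup F$. Moreover $<A(u^k),u^k>\,=\,<F,u^k>\,\to\,<F,u>\,=\,<\chi,u>$, so $\limsup_{k\to\infty}<A(u^k),u^k>\,\le\,<\chi,u>$. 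Thus $u^k\rightharpoonup u$, $A(u^k)\rightharpoonup\chi$ and the limsup condition of \eqref{psevdomon1} all hold, and pseudo-monotonicity, via \eqref{psevdomon2}, yields $\chi=A(u)$, i.e.\ $A(u)=F$. As $F\in V'$ was arbitrary, $A$ is surjective.

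The main obstacle is the finite-dimensional existence: one must set up the Galerkin map carefully and verify its continuity (precisely where the demicontinuity of pseudo-monotone operators enters) before invoking Brouwer's theorem; the uniform bound $\|u^k\|_V\le R$ coming from coercivity is what makes the subsequent weak-compactness argument go through, and the pseudo-monotonicity hypothesis is used exactly once, at the very end, to upgrade the weak identity $A(u^k)\rightharpoonup F$ to the equality $A(u)=F$.
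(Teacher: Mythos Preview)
Your proof is correct and complete. Note, however, that the paper does not actually prove this lemma: it is stated with a citation to Lions' book \cite[Chapter II, \S2, Theorem 2.7]{Lions} and used as a black box in the Appendix. What you have written is essentially the classical Galerkin argument found in Lions --- demicontinuity from pseudo-monotonicity, Brouwer's theorem on each finite-dimensional subspace via the acute-angle lemma, the uniform a priori bound from coercivity, and the final identification of the weak limit via pseudo-monotonicity --- so your approach coincides with the standard one the paper is invoking.
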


\begin{propos}\label{kozhelm-confir_1}
	Let the assumptions \eqref{us0}--\eqref{us5} be satisfied. Then the operator
 $$
 {\bf A}^m=\A^m+{A}^m_0:\mathring{W}_{\overrightarrow{\bf p}(\cdot)}^{1}(\Omega)\rightarrow \left(\mathring{W}_{\overrightarrow{\bf p}(\cdot)}^{1}(\Omega)\right)'
 $$
 defined by \eqref{theorem_2(1-6)} is pseudo-monotone and coercive.
\end{propos}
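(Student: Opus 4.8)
The plan is to verify directly that ${\bf A}^m$ satisfies the three items of Definition \ref{kozhelm-def_3} and the coercivity \eqref{kozhelm-koerc}, exploiting that ${\bf A}^m=\A^m+A^m_0$ is the sum of a Leray--Lions type principal part $\A^m$ and the lower order terms built from $b^m$, $|s_0|^{p_0(\x)-2}s_0$ and the fixed datum $f^m$, and that the truncation $T_m$ together with \eqref{theorem_2(1-1)}, \eqref{theorem_2(1-3)} keeps every coefficient controlled uniformly in the argument. For \emph{coercivity and boundedness} I would write, by \eqref{theorem_2(1-6)}, $<{\bf A}^m(u),u>=\langle\a(\x,T_m(u),\nabla u)\cdot\nabla u\rangle+\langle b^m(\x,u,\nabla u)u\rangle+\langle|u|^{p_0(\x)}\rangle+\langle f^m u\rangle$; by \eqref{us3} the first term is $\geq\overline a\langle\P(\x,\nabla u)\rangle-\|\phi\|_1$, by \eqref{theorem_2(1-4)} the second is nonnegative, and $|\langle f^m u\rangle|\leq C(m)\|u\|_{p_0(\cdot)}$ since $f^m$ is bounded with support in $\Omega(m)$ (use \eqref{gel}, \eqref{st3}). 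With \eqref{st2} this gives $<{\bf A}^m(u),u>\geq c\big(\sum_{i=1}^n\|u_{x_i}\|^{p_i^-}_{p_i(\cdot)}+\|u\|^{p_0^-}_{p_0(\cdot)}\big)-C(m)\|u\|_{p_0(\cdot)}-C$, and since $p_i^->1$, $p_0^->1$ we obtain \eqref{kozhelm-koerc}. Boundedness of ${\bf A}^m$ follows from \eqref{us1'} (which bounds $\|a_i(\x,T_m(u),\nabla u)\|_{p'_i(\cdot)}$ through $\widehat A_i(m)$ and $\langle\P(\x,\nabla u)\rangle$), from $\||u|^{p_0(\cdot)-2}u\|_{p'_0(\cdot)}$ being governed by $\|u\|_{p_0(\cdot)}$, and from $b^m,f^m\in L_\infty(\Omega(m))$, together with \eqref{st2}, \eqref{st3}.

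For \emph{pseudo-monotonicity} assume the hypotheses of \eqref{psevdomon1}: $u^j\rightharpoonup u$ in $V=\mathring{W}_{\overrightarrow{\bf p}(\cdot)}^1(\Omega)$, ${\bf A}^m(u^j)\rightharpoonup\omega$ in $V'$, $\limsup_j<{\bf A}^m(u^j),u^j>\,\leq\,<\omega,u>$. Arguing as in Subsection \ref{s5.3} (multiply by the cut-offs $\eta_R$, use the compact embedding of Lemma \ref{lemma_0} on each $\Omega(R+1)$, diagonalise) I extract a subsequence with $u^j\to u$ a.e.\ in $\Omega$; then $|u^j|^{p_0(\x)-2}u^j\rightharpoonup|u|^{p_0(\x)-2}u$ in $L_{p'_0(\cdot)}(\Omega)$ (Lemma \ref{lemma_4}), while by reflexivity and the bounds $|b^m|\leq m\chi_{\Omega(m)}$, \eqref{us1'} a further subsequence gives $b^m(\x,u^j,\nabla u^j)\stackrel{*}\rightharpoonup\beta$ in $L_\infty(\Omega(m))$ and $\a(\x,T_m(u^j),\nabla u^j)\rightharpoonup\chi$ in $\L_{\overrightarrow{\p}'(\cdot)}(\Omega)$ (with $\beta,\chi$ not yet identified). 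Hence $<\omega,v>=\langle\chi\cdot\nabla v\rangle+\langle(\beta+|u|^{p_0(\x)-2}u+f^m)v\rangle$. Since $\langle f^m u^j\rangle\to\langle f^m u\rangle$ (weak convergence in $L_{p_0(\cdot)}(\Omega)$) and, by uniform integrability of $\{u^j\}$ on the bounded set $\Omega(m)$ with Vitali's Lemma \ref{lemma_9} giving $u^j\to u$ in $L_1(\Omega(m))$, also $\langle b^m(\x,u^j,\nabla u^j)u^j\rangle\to\langle\beta u\rangle$, the $\limsup$ hypothesis combined with Fatou's lemma applied to $\langle|u^j|^{p_0(\x)}\rangle$ reduces to $\limsup_{j\to\infty}\int_\Omega\a(\x,T_m(u^j),\nabla u^j)\cdot\nabla u^j\,d\x\leq\int_\Omega\chi\cdot\nabla u\,d\x$.

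This is the standard Leray--Lions situation. Testing the monotonicity \eqref{us2} against $\nabla w$ for $w\in C_0^\infty(\Omega)$, expanding and passing to the limit (the term $\a(\x,T_m(u^j),\nabla w)\cdot(\nabla u^j-\nabla w)$ is a strong--weak pair, since $\a(\x,T_m(u^j),\nabla w)\to\a(\x,T_m(u),\nabla w)$ strongly in $\L_{\overrightarrow{\p}'(\cdot)}(\Omega)$ by Lemma \ref{lemma_5} and \eqref{us1'}), gives $\int_\Omega(\chi-\a(\x,T_m(u),\nabla w))\cdot(\nabla u-\nabla w)\,d\x\geq0$; Minty's trick $w=u-\lambda\varphi$, $\lambda\to0^+$, then forces $\chi=\a(\x,T_m(u),\nabla u)=\a^m(\x,u,\nabla u)$. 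Next set $q^j(\x)=(\a(\x,T_m(u^j),\nabla u^j)-\a(\x,T_m(u^j),\nabla u))\cdot(\nabla u^j-\nabla u)\geq0$; expanding $\int_\Omega q^j\,d\x$, the mixed integrals converge (two strong--weak pairs and one weak--fixed pair, all legitimate because $\a(\x,T_m(u^j),\nabla u)\to\a^m(\x,u,\nabla u)$ strongly in $\L_{\overrightarrow{\p}'(\cdot)}(\Omega)$ and $\a(\x,T_m(u^j),\nabla u^j)\rightharpoonup\chi=\a^m(\x,u,\nabla u)$ weakly there) to $\int_\Omega\a^m(\x,u,\nabla u)\cdot\nabla u\,d\x$, which with the displayed $\limsup$ bound and $q^j\geq0$ forces $\int_\Omega q^j\,d\x\to0$. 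Then Lemma \ref{lemma_7} (with $k=m$, $Q=\Omega$) yields $\nabla u^j\to\nabla u$ a.e.\ in $\Omega$, whence continuity of $b$ and $|b^m|\leq m\chi_{\Omega(m)}$ give $\beta=b^m(\x,u,\nabla u)$, i.e.\ $\omega={\bf A}^m(u)$.

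Finally, $\int_\Omega\a^m(\x,u^j,\nabla u^j)\cdot\nabla u^j\,d\x\to\int_\Omega\a^m(\x,u,\nabla u)\cdot\nabla u\,d\x$ (from $\int_\Omega q^j\,d\x\to0$ and the mixed-term limits), and together with the already established limits of the $b^m$- and $f^m$-integrals and with $\langle|u^j|^{p_0(\x)}\rangle\to\langle|u|^{p_0(\x)}\rangle$ (the $\limsup$ hypothesis now supplies the upper bound matching Fatou's lower bound) this gives $<{\bf A}^m(u^j),u^j>\,\to\,<{\bf A}^m(u),u>$, so \eqref{psevdomon2} holds and ${\bf A}^m$ is pseudo-monotone. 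I expect the main obstacle to be exactly the passage from the single global $\limsup$ inequality to $\int_\Omega q^j\,d\x\to0$ on the unbounded domain $\Omega$: it works only because the lower order terms are absorbed first (the $f^m$- and $b^m$-integrals cancel via weak/Vitali convergence and the bound $|b^m|\leq m$, while the $|u^j|^{p_0(\x)}$-integral is only one-sidedly controlled by Fatou), and because the truncation $T_m$ makes $\a(\x,T_m(u^j),\cdot)$ uniformly dominated in $j$, which is precisely what Lemma \ref{lemma_7} requires.
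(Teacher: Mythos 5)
Your proposal is correct, and its skeleton (verifying Definition \ref{kozhelm-def_3} directly, coercivity via \eqref{us3} and \eqref{theorem_2(1-4)}, a.e.\ convergence of $u^j$ through the cut-offs $\eta_R$, Lemma \ref{lemma_0} and diagonalisation, passage to the limit in the $b^m$- and $f^m$-pairings on $\Omega(m)$, then $\int_\Omega q^j\,d\x\to0$ and Lemma \ref{lemma_7}) coincides with the paper's. Where you genuinely diverge is in how the $\limsup$ hypothesis is exploited: you decouple the zeroth-order power term, using Fatou on $\langle|u^j|^{p_0(\x)}\rangle$ to isolate the bound $\limsup_j\langle\a(\x,T_m(u^j),\nabla u^j)\cdot\nabla u^j\rangle\leq\langle\chi\cdot\nabla u\rangle$, and then run a separate Minty argument to identify $\chi$; the paper instead keeps $|u^j|^{p_0(\x)-2}u^j$ coupled to the principal part and uses the \emph{joint} monotonicity of $\a(\x,s_0,\cdot)$ and of $s_0\mapsto|s_0|^{p_0(\x)-2}s_0$ (inequality \eqref{theorem_3(28)}), which together with the strong convergence \eqref{theorem_3(29)} yields the matching $\liminf$ bound \eqref{theorem_3(30)}, hence the equality \eqref{theorem_3(31)}, with no Minty step at all; the weak limits $\widetilde{\a}^m$, $\widetilde{b}^m$ are identified only afterwards, from the a.e.\ convergence of gradients via Lemmas \ref{lemma_7} and \ref{lemma_4}. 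Your route buys a more modular argument (the power term is disposed of once and for all by Fatou, and the principal part is treated as a pure Leray--Lions operator), at the price of an extra step that is in fact redundant: once you have the $\limsup$ bound and the mixed-term limits, $\int_\Omega q^j\,d\x\to0$ follows with $\chi$ left unidentified, and Lemma \ref{lemma_7} plus Lemma \ref{lemma_4} then give $\chi=\a(\x,T_m(u),\nabla u)$ anyway. Two small imprecisions to fix if you keep Minty: the monotonicity inequality is derived for $w\in C_0^1$, so plugging $w=u-\lambda\varphi$ needs either a density/Nemytskii-continuity remark or the observation that the derivation only uses $\P(\x,\nabla w)\in L_1(\Omega)$, valid for every $w\in\mathring{H}^1_{\overrightarrow{\p}(\cdot)}(\Omega)$; and Minty by itself identifies $\chi$ only through $\langle\chi\cdot\nabla v\rangle=\langle\a(\x,T_m(u),\nabla u)\cdot\nabla v\rangle$, which suffices for your use of it but is not a pointwise identity until the a.e.\ gradient convergence is available. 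Neither point is a gap; the paper's coupled-monotonicity variant simply avoids them.
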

\begin{proof}
The inequalities \eqref{us1'}, \eqref{st2}, imply the estimates
$$
\rho_{p_i'(\cdot)}(a_i(\x,T_m(u),\nabla u))\leq \widehat{A}_i(m)(\|\P(\x,\nabla u)\|_1+\|\Psi_i\|_1)\leq C_{i1}(m,\|\nabla u\|_{\overrightarrow{\p}(\cdot)})
$$
for $i=1,\ldots,n$.
Then, using \eqref{st3}, we get
\begin{equation}\label{theorem_3(4)}
\|\a(\x,T_m(u),\nabla u)\|_{\overrightarrow{\p}'(\cdot)}=\sum\limits_{i=1}^n\|a_i(\x,T_m(u),\nabla u)\|_{p'_i(\cdot)}\leq C_2(m,\|\nabla u\|_{\overrightarrow{\p}(\cdot)}).
\end{equation}

The inequality \eqref{theorem_2(1-3)} yields
\begin{equation}\label{theorem_3(5)}
\|b^m(\x,u,\nabla u)\|_{p_0'(\cdot)}\leq C_3(m).
\end{equation}

Applying \eqref{st4}, we deduce that
\begin{equation}\label{theorem_3(6)}
\||u|^{p_0(\x)-2}u\|_{p_0'(\cdot)}\leq C_4(\|u\|_{p_0(\cdot)}).
\end{equation}

Using the estimates \eqref{theorem_3(4)}--\eqref{theorem_3(6)} and the inequality \eqref{gel}, for any $v\in \mathring{W}^{1}_{\overrightarrow{\bf p}(\cdot)}(\Omega)$ we obtain that
\begin{gather}
|<\A^m(u),v>|\leq \sum\limits_{i=1}^n\int\limits_{\Omega}\left|a_i(\x,T_m(u),\nabla u)\right||v_{x_i}|d\x\leq\nonumber\\
\leq 2\sum\limits_{i=1}^n\|a_i(\x,T_m(u),\nabla u\|_{p'_i(\cdot)}\|v_{x_i}\|_{p_i(\cdot)}\leq\nonumber\\
\leq 2\|\a(\x,T_m(u),\nabla u)\|_{\overrightarrow{\p}'(\cdot)}\|\nabla v\|_{\overrightarrow{\p}(\cdot)}\leq C_{5}(m,\|\nabla u\|_{\overrightarrow{\p}(\cdot)})\|\nabla v\|_{\overrightarrow{\p}(\cdot)},\label{theorem_3(7)}
\end{gather}
\begin{gather}
|<A^m_0(u),v>|\leq \int\limits_{\Omega}\left|b^m(\x,u,\nabla u)+|u|^{p_0(\x)-2}u+f^m\right||v|d\x\leq\nonumber\\
\leq 2\left(\|b^m(\x,u,\nabla u)\|_{p'_0(\cdot)}+\||u|^{p_0(\x)-2}u\|_{p'_0(\cdot)}+\|f^m\|_{p'_0(\cdot)}\right)\|v\|_{p_0(\cdot)}\nonumber \\
\leq C_{6}(m,\| u\|_{p_0(\cdot)})\| v\|_{p_0(\cdot)}.\label{theorem_3(8)}
\end{gather}
The boundedness of the operator ${\bf A}^m$ follows from \eqref{theorem_3(7)}, \eqref{theorem_3(8)}.

Let us prove the coercivity of ${\bf A}^m$. Using \eqref{us3}, \eqref{theorem_2(1-4)}, we get
\begin{gather*}
<\mathbf{A}^m(u),u>=\int\limits_\Omega \a(\x,T_m(u),\nabla u)\cdot\nabla u d{\x}+\nonumber\\
+\int\limits_\Omega (b^m(\x,u,\nabla u)u + |u|^{p_0(\x)}+f^mu)d\x\geq\nonumber\\
\geq\overline{a}\sum\limits_{i=1}^n\rho_{p_i(\cdot)}(u_{x_i})+(1-\varepsilon)\rho_{p_0(\cdot)}(u)
-C_{\varepsilon}\rho_{p'_0(\cdot)}(f^m)-\|\phi\|_1.\nonumber
\end{gather*}
Then, applying \eqref{st2} and taking $\varepsilon\in(0,1)$, we deduce the inequality
\begin{gather}\label{theorem_3(9)}<\mathbf{A}^m(u),u>\geq
 C_{7}\sum_{i=1}^n
\|u_{x_i}\|_{p_i(\cdot)}^{p^-_i}+C_7\|u\|_{p_0(\cdot)}^{p^-_0}-C_8.
\end{gather}

Let $\|u^j\|_{\mathring{W}^{1}_{\overrightarrow{\bf p}(\cdot)}(\Omega)}\rightarrow \infty$ as $j\rightarrow \infty$.
Then for any $l>1$ there exists $j_0$ such that for each $j\geq j_0$ the following inequality is satisfied:
\begin{equation}\label{theorem_3(10)}
\|u^j\|_{\mathring{W}^{1}_{\overrightarrow{\bf p}(\cdot)}(\Omega)}=\|u^j\|_{p_0(\cdot)}+\sum\limits_{i=1}^n\|u^j_{x_i}\|_{p_i(\cdot)}>l(n+1).
\end{equation}
For any $j\geq j_0$ there exists at least one summand which is greater than $l$.
Assume, for definiteness, that for a fixed $j\geq j_0$ the summand $\|u^j\|_{p_0(\cdot)}>l$ is largest.
Combining \eqref{theorem_3(9)}, \eqref{theorem_3(10)}, we have
$$
\frac{<\mathbf{A}^m(u^j),u^j>}{\|u^j\|_{\mathring{W}^{1}_{\overrightarrow{\bf p}(\cdot)}(\Omega)}}\geq C_{9}\frac{\|u^j\|^{p^-_0-1}_{p_0(\cdot)}}{(n+1)}-\frac{C_{10}}{(n+1)l}>C_{11}l^{p^-_0-1}-\frac{C_{12}}{l}.
$$
Therefore, in view of the arbitrariness of $l$ and $j,\;$ $j\geq j_0$, we get \eqref{kozhelm-koerc}.

Let us now prove that the assumptions \eqref{psevdomon1} imply \eqref{psevdomon2}. Namely, we show that if
\begin{gather}
u^j\rightharpoonup u\quad \mbox{in}\quad \mathring{W}^{1}_{\overrightarrow{\bf p}(\cdot)}(\Omega),\label{theorem_3(11)}\\
{\bf A}^m(u^j)\rightharpoonup \omega \quad\mbox{in}\quad {W}^{-1}_{\overrightarrow{\bf p}'(\cdot)}(\Omega),\label{theorem_3(12)}\\
 \lim\limits_{j\to \infty}\sup<{\bf A}^m(u^j),u^j>\leq <\omega ,u>,\label{theorem_3(13)}
\end{gather}
then
\begin{gather}\label{theorem_3(14)}
\omega={\bf A}^m(u),\\
 \lim_{j\to \infty}<{\bf A}^m(u^j),u^j>= < {\bf  A}^m(u),u>.\label{theorem_3(15)}
\end{gather}

The convergence \eqref{theorem_3(11)} yields the estimate
\begin{equation}\label{theorem_3(16)}
\|u^j\|_{\mathring{W}^{1}_{\overrightarrow{\bf p}(\cdot)}(\Omega)}\leq C_{13},
\quad j\in\mathbb{N}.
\end{equation}
Let us show the following convergence along a subsequence:
\begin{equation}\label{theorem_3(17)}
u^j\rightarrow u  \quad \mbox {a.e.\ in} \quad \Omega,\quad j\rightarrow \infty.
\end{equation}

For $R>0$, applying \eqref{sum}, we deduce that
\begin{gather*}
\int\limits_{\Omega}\P(\x,\nabla (\eta_{R}(|\x|) u^j))d\x\leq C_{14} \int\limits_{\Omega }\P(\x,\eta_{R}\nabla u^j ) d\x
+C_{14}\int\limits_{\Omega}\P(\x,\nabla \eta_{R}u^j)d\x\leq\\
\leq C_{14}\sum\limits_{i=1}^n\left(\rho_{p_i(\cdot)}(u^j_{x_i})+\rho_{p_i(\cdot),\Omega(R)}(u^j)\right).
\end{gather*}
Therefore, using the inequality \eqref{us0} and the estimate \eqref{theorem_3(16)}, we get
\begin{gather*}\int\limits_{\Omega}\P(\x,\nabla (\eta_{R}(|\x|) u^j))d\x\leq C_{14}\|{\bf P}(\x,u^j,\nabla u^j)\|_1+ C_{14}{\rm meas}\, \Omega (R)\leq C_{15},\quad j\in\mathbb{N}.
\end{gather*}
Thus, for any fixed $R>0$ we have the boundedness of the set $\{\eta_{R} u^j\}_{j\in\mathbb{N}}$ in
$\mathring{H}_{\overrightarrow{\p}(\cdot)}^1(\Omega(R+1))$.
By Lemma \ref{lemma_0}, the space $\mathring{H}_{\overrightarrow{\bf p}(\cdot)}^{1}(\Omega(R+1))$ is compactly embedded in $L_{p_-(\cdot)}({\Omega}(R+1))$. Hence, for any fixed $R>0$ we have the convergence of $\eta_{R}u^j$ in $L_{p_-(\cdot)}(\Omega(R+1))$ as $j\rightarrow\infty$. This implies the convergence
 \begin{equation}\label{theorem_3(178)}
 u^j\rightarrow u\quad \text{in} \quad L_{p_-(\cdot)}(\Omega(R))
 \end{equation}
 and the convergence $u^j\rightarrow u$ a.e.\ in $\Omega(R)$ along a subsequence.
 Then, using the diagonalisation argument with respect to $R\in \mathbb{N}$, we get the convergence \eqref{theorem_3(17)}.

From \eqref{theorem_3(16)}, \eqref{theorem_3(4)} we have the estimate
\begin{gather}
\|\a(\x,T_m(u^j),\nabla u^j)\|_{\overrightarrow{\p}'(\cdot)}\leq C_{16}(m),
\quad j\in\mathbb{N}.\label{theorem_3(18)}
\end{gather}
Therefore, there exist functions $\widetilde{\a}^m\in \L_{\overrightarrow{\p}'(\cdot)}(\Omega)$ such that
\begin{equation}\label{theorem_3(19)}
\a(\x,T_m(u^j),\nabla u^j)\rightharpoonup \widetilde{\a}^m\quad \text{in} \quad \L_{\overrightarrow{\p}'(\cdot)}(\Omega),\quad j\rightarrow \infty.
\end{equation}

The estimate \eqref{theorem_3(5)} implies the existence of a function $\widetilde{b}^m\in L_{p'_0(\cdot)}(\Omega)$ such that
\begin{equation}\label{theorem_3(21)}
b^m(\x,u^j,\nabla u^j)\rightharpoonup \widetilde{b}^m\quad \text{in} \quad L_{p'_0(\cdot)}(\Omega),\quad j\rightarrow \infty.
\end{equation}

Then, the estimates \eqref{theorem_3(6)}, \eqref{theorem_3(16)} yield
$$
\||u^j|^{p_0(\x)-2}u^j\|_{p_0'(\cdot)}\leq C_{17},\quad j\in\mathbb{N}.
$$
Thus, in view of the convergence \eqref{theorem_3(17)}, we obtain from Lemma \ref{lemma_4} that
\begin{equation}\label{theorem_3(21-2)}
|u^j|^{p_0(\x)-2}u^j\rightharpoonup|u|^{p_0(\x)-2}u\quad \text{in} \quad L_{p'_0(\cdot)}(\Omega),\quad j\rightarrow \infty.
\end{equation}

Due to \eqref{theorem_3(12)}, using \eqref{theorem_3(19)}--\eqref{theorem_3(21)}, for any $v\in \mathring{W}_{\overrightarrow{\bf p}(\cdot)}^{1}(\Omega)$ we deduce that
\begin{gather}
<\omega,v>=\lim_{j\to \infty} < {\bf A}^m(u^j),v>=
\lim_{j\to \infty}\langle\a(\x,T_m(u^j),\nabla u^j)\cdot\nabla v\rangle +\nonumber\\
+\lim_{j\to \infty}\langle(b^m(\x,u^j,\nabla u^j)+|u^j|^{p_0(\x)-2}u^j+f^m)v\rangle=\label{theorem_3(22)}\\
=\langle\widetilde{\a}^m\cdot\nabla v\rangle +\langle(\widetilde{b}^m+|u|^{p_0(\x)-2}u+f^m)v\rangle.\nonumber
\end{gather}
Evidently, the following equality is satisfied:
\begin{gather}
 < {\bf A}^m(u^j),u^j>=
\langle\a(\x,T_m(u^j),\nabla u^j)\cdot\nabla u^j\rangle +\nonumber\\
+\langle(b^m(\x,u^j,\nabla u^j)+|u^j|^{p_0(\x)-2}u^j+f^m)u^j\rangle.\label{theorem_3(22-3)}
\end{gather}

On one hand, relations \eqref{theorem_3(13)}, \eqref{theorem_3(22)} yield
\begin{gather}
\lim\limits_{j\to \infty}\sup <{\bf A}^m(u^j),u^j>
\leq\langle\widetilde{\a}^m\cdot\nabla u\rangle +\langle(\widetilde{b}^m+|u|^{p_0(\x)-2}u+f^m)u\rangle.\label{theorem_3(23)}
\end{gather}
It follows from the convergence \eqref{theorem_3(11)} that
\begin{gather}
\lim\limits_{j\to \infty}\langle f^m u^j\rangle=\langle f^m u\rangle.\label{theorem_3(25)}
\end{gather}
Then, from the inequality \eqref{theorem_2(1-3)} and the convergence \eqref{theorem_3(178)} we have
\begin{gather*}
\lim\limits_{j\to \infty}\left|\langle b^m(\x,u^j,\nabla u^j)(u^j-u)\rangle\right|\leq m\lim\limits_{j\to \infty}\int\limits_{\Omega(m)}|u^j-u|d\x
\leq \\ \leq C(m)\lim\limits_{j\to \infty}\|u^j-u\|_{p_-(\cdot),\Omega(m)}= 0.
\end{gather*}
Using this fact and the convergence \eqref{theorem_3(21)}, we conclude that
\begin{equation}\label{theorem_3(26)}
\lim\limits_{j\to \infty}\langle b^m(\x,u^j,\nabla u^j)u^j\rangle= \langle \widetilde{b}^m u\rangle.
\end{equation}
Combining \eqref{theorem_3(23)}--\eqref{theorem_3(26)}, we obtain the inequality
\begin{gather}\label{theorem_3(27)}
\lim\limits_{j\to \infty}\sup\langle \a(\x,T_m(u^j),\nabla u^j)\cdot\nabla u^j+|u^j|^{p_0(\x)-2}u^j\rangle
\leq\langle\widetilde{\a}^m\cdot\nabla u+|u|^{p_0(\x)}\rangle.
\end{gather}

On the other hand, thanks to the assumption \eqref{us2}, we have
\begin{gather*}
 \langle (\a(\x,T_m(u^j),\nabla u^j)-\a(\x,T_m(u^j),\nabla u))\cdot\nabla (u^j-u)\rangle+\\+\langle(|u^j|^{p_0(\x)-2}u^j-|u|^{p_0(\x)-2}u)(u^j-u)\rangle\geq0.
 \end{gather*}
Then
 \begin{gather}\label{theorem_3(28)}
 \langle \a(\x,T_m(u^j),\nabla u^j)\cdot\nabla u^j+|u^j|^{p_0(\x)}\rangle\geq\langle\a(\x,T_m(u^j),\nabla u^j)\cdot\nabla u\rangle+\\+\langle\a(\x,T_m(u^j),\nabla u)\cdot\nabla (u^j-u)\rangle+\langle|u^j|^{p_0(\x)-2}u^ju\rangle+\langle|u|^{p_0(\x)-2}u(u^j-u)\rangle.\nonumber
 \end{gather}

Using the convergence \eqref{theorem_3(17)}, the inequality \eqref{us1'} and Lemma \ref{lemma_5}, we establish that
\begin{equation}\label{theorem_3(29)}
\a(\x,T_m(u^j),\nabla u)\rightarrow \a(\x,T_m(u),\nabla u)
\,\text{  strongly in }\, \L_{\overrightarrow{\p}'(\cdot)}(\Omega),\quad j\rightarrow\infty.
\end{equation}

In view of the convergences \eqref{theorem_3(11)},  \eqref{theorem_3(19)}, \eqref{theorem_3(21-2)}, \eqref{theorem_3(21)}, \eqref{theorem_3(29)}, we deduce from \eqref{theorem_3(28)} that
 \begin{gather}\label{theorem_3(30)}
\lim\limits_{j\to \infty}\inf\langle \a(\x,T_m(u^j),\nabla u^j)\cdot\nabla u^j+|u^j|^{p_0(\x)-2}u^j\rangle\geq\langle\widetilde{\a}^m\cdot\nabla u+|u|^{p_0(\x)}\rangle.
\end{gather}
Combining \eqref{theorem_3(27)}, \eqref{theorem_3(30)}, we get
\begin{gather}\label{theorem_3(31)}
\lim\limits_{j\to \infty}\langle \a(\x,T_m(u^j),\nabla u^j)\cdot\nabla u^j+|u^j|^{p_0(\x)-2}u^j\rangle=\langle\widetilde{\a}^m\cdot\nabla u+|u|^{p_0(\x)}\rangle.
\end{gather}

Now, we obtain from  \eqref{theorem_3(22-3)}, \eqref{theorem_3(25)}, \eqref{theorem_3(26)}, \eqref{theorem_3(31)}, \eqref{theorem_3(22)} that
\begin{equation}\label{theorem_3(32)}
\lim_{j\to \infty}\langle {\bf A}^m(u^j),u^j\rangle= < \omega,u>.
\end{equation}
Combining \eqref{theorem_3(11)},  \eqref{theorem_3(19)}, \eqref{theorem_3(21)}, \eqref{theorem_3(29)}, \eqref{theorem_3(31)}, we get the relation
\begin{gather*}\lim\limits_{j\to \infty}\langle (\a(\x,T_m(u^j),\nabla u^j)-\a(\x,T_m(u^j),\nabla u))\cdot\nabla (u^j-u)+\\+\langle(|u^j|^{p_0(\x)-2}-|u|^{p_0(\x)-2}u)(u^j-u)\rangle= 0.
\end{gather*}
Therefore, Lemma \ref{lemma_7} ($\v^j=\nabla u^j,\;\v=\nabla u$) implies the following convergences along a subsequence:
\begin{gather*}
\nabla u^j\rightarrow \nabla u \quad \mbox{in}\quad  \L_{\overrightarrow{\p}(\cdot)}(\Omega),\quad j\rightarrow \infty,\\
\nabla u^j\rightarrow \nabla u \quad \mbox{a.e.\ in}\quad \Omega,\quad j\rightarrow \infty.
\end{gather*}
Then Lemma \ref{lemma_4}
$$
\widetilde{\a}^m=\a(\x,T_m(u),\nabla u),\quad \widetilde{b}^m=b(\x,u,\nabla u)
$$
and, in view of \eqref{theorem_3(22)}, for any $v\in\mathring{W}_{\overrightarrow{\bf p}(\cdot)}^{1}(\Omega)$ the following equality is satisfied:
$$
< \omega,v>=< {\bf A}^m(u),v>.
$$
Finally, thanks to \eqref{theorem_3(32)}, we obtain \eqref{theorem_3(14)} and \eqref{theorem_3(15)}.

From Proposition \ref{kozhelm-confir_1} and in view of Lemma \ref{kozhelm-lemma_3}, there exists a function $u\in \mathring{W}_{\overrightarrow{\bf p}(\cdot)}^{1}(\Omega)$ such that ${\bf A}^m(u)={\bf O}.$
Thus, for any $v\in\mathring{W}_{\overrightarrow{\bf p}(\cdot)}^{1}(\Omega)$ the integral identity \eqref{theorem_2(1-5-6)} holds true.
\end{proof}

\bigskip
\noindent
{\bf Acknowledgements.} This work was supported by the RFBR under Grant [18-01-00428].


\selectlanguage{english}

\end{document}